\newtheorem{thm}{Theorem}[section]
\newtheorem{lem}{Lemma}[section]
\theoremstyle{definition}
\theoremstyle{remark}
\newtheorem{rem}{Remark}[section]
\numberwithin{equation}{section}
\numberwithin{equation}{section}
\newcounter{saveeqn}
\newcommand{\eqnref}[1]{(\ref {#1})}
\newcommand{\Bz}{\mathbf{z}}
\newcommand{\bE}{\mathbf{E}}
\newcommand{\bH}{\mathbf{H}}
\newcommand{\Bx}{\mathbf{x}}
\newcommand{\By}{\mathbf{y}}
\newcommand{\bZ}{\mathbf{z}}
\newcommand{\Gl}{\lambda}
\newcommand{\Gs}{\sigma}
\newcommand{\Ge}{\varepsilon}
\newcommand{\tdx}{\tilde{\Bx}}
\newcommand{\tdy}{\tilde{\By}}
\newcommand{\Acal}{\mathcal{A}}
\newcommand{\Kcal}{\mathcal{K}}
\newcommand{\Lcal}{\mathcal{L}}
\newcommand{\Scal}{\mathcal{S}}
\newcommand{\Mcal}{\mathcal{M}}
\newcommand{\Ocal}{\mathcal{O}}
\newcommand{\ds}{\displaystyle}
\newcommand{\RR}{\mathbb{R}}
\newcommand{\p}{\partial}
\newcommand{\beq}{\begin{equation}}
\newcommand{\eeq}{\end{equation}}
\DeclareMathAlphabet{\itbf}{OML}{cmm}{b}{it}
\title[Identifying magnetized anomalies in geomagnetism]{On identifying magnetized anomalies using geomagnetic monitoring}
\author{Youjun Deng}
\address{School of Mathematics and Statistics, Central South University, Changsha, Hunan, China.}
\email{youjundeng@csu.edu.cn, dengyijun\_001@163.com}
\author{Jinhong Li}
\address{School of Science, Qilu University of Technology (Shandong Academy of Sciences), Jinan, Shandong, China}
\email{lijinhong@qlu.edu.cn}
\author{Hongyu Liu}
\address{Department of Mathematics, Hong Kong Baptist University, Kowloon, Hong Kong SAR, China}
\email{hongyu.liuip@gmail.com, hongyuliu@hkbu.edu.hk}
\date{} 
\begin{document}
\maketitle

\begin{abstract}

We propose and investigate the inverse problem of identifying magnetized anomalies beneath the Earth using the geomagnetic monitoring. Suppose a collection of magnetized anomalies presented in the shell of the Earth. The presence of the anomalies interrupts the magnetic field of the Earth, monitored above the Earth. Using the difference of the magnetic fields before and after the presence of the magnetized anomalies, we show that one can uniquely recover the locations as well as their material parameters of the anomalies. Our study provides a rigorous mathematical theory to the geomagnetic detection technology that has been used in practice.

\medskip

\noindent{\bf Keywords:}~~Maxwell system, geomagnetism, magnetized anomalies, inverse problem, uniqueness

\noindent{\bf 2010 Mathematics Subject Classification:}~~35Q60, 35J05, 31B10, 35R30, 78A40

\end{abstract}

\section{Introduction}

It is widely known that the Earth as well as most of the planets in the solar system all generate magnetic fields through the motion of electrically conducting fluids \cite{Fey,Weiss}. Earth's magnetic field, also known as the {\it geomagnetic} field, is the magnetic field that extends from the Earth's interior out into the space. Following the general discussion in \cite{BPCo96}, the geomagnetic field is described by the Maxwell system as follows.

Let the Earth be of a core-shell structure with $\Sigma_c$ and $\Sigma$, respectively, signifying the core and the Earth. It is assumed that both $\Sigma_c$ and $\Sigma$ are bounded simply-connected $C^2$ domains in $\mathbb{R}^3$ and $\Sigma_c\Subset\Sigma$. $\Sigma_s:=\Sigma\backslash\overline{\Sigma_c}$ signifies the shell of the Earth. We note that in the literature, it is usually assumed that the Earth and its core are concentric balls of radii $R_1$ and $R_0$ with $R_0\ll R_1$. However, we shall not impose such a restrictive assumption in our study. Let $\varepsilon, \mu$ and $\sigma$ be all real-valued $L^\infty$ functions, such that $\varepsilon$ and $\mu$ are positive and $\sigma$ is nonnegative.
The functions $\varepsilon$, $\mu$ and $\sigma$
signify
the electromagnetic (EM) medium parameters in $\mathbb{R}^3$, and are referred to as the electric permittivity,
 the magnetic permeability and the electric conductivity, respectively.
Let $\varepsilon_0$ and $\mu_0$ denote, respectively, the permittivity and the permeability of the uniformly homogeneous free space $\mathbb{R}^3\backslash\overline{\Sigma}$. The material distribution is described by
\beq\label{eq:paradef01}
\begin{split}
\Gs(\Bx)=\Gs_c(\Bx)\chi(\Sigma_c),\ \ \quad\mu(\Bx)=(\mu_c(\Bx)-\mu_0)\chi(\Sigma_c)+\mu_0,\\
\varepsilon(\Bx)=(\varepsilon_c(\Bx)-\varepsilon_0)\chi(\Sigma_c)+(\varepsilon_s(\Bx)-\varepsilon_0)\chi(\Sigma_s)+\varepsilon_0,\quad
\end{split}
\eeq
where and also in what follows, $\chi$ denotes the characteristic function. By \eqref{eq:paradef01}, we know that the mediums in the core and shell of the Earth are respectively characterized by $(\Sigma_c; \varepsilon_c,\mu_c,\sigma_c)$ and $(\Sigma_s; \varepsilon_s, \mu_0)$. Let $\mathcal{E}(\Bx,t)$ and $\mathcal{H}(\Bx, t)$, $(\Bx, t)\in\mathbb{R}^3\times\mathbb{R}_+$, respectively, denote the electric and magnetic fields of the Earth. They satisfy the following Maxwell system for $ (\Bx,t)\in \RR^3\times\RR_+$ (cf. \cite{BPCo96})
 \begin{equation}\label{eq:pss}
\begin{cases}
\nabla\times\mathcal{H}(\Bx,t)=\varepsilon(\Bx)\p_t \mathcal{E}(\Bx,t)+\Gs(\Bx)(\mathcal{E}(\Bx,t)+\mu(\Bx)\mathbf{v}\times\mathcal{H}(\Bx,t)),\medskip \\
 \nabla\times\mathcal{E}(\Bx,t)=-\mu(\Bx)\p_t \mathcal{H}(\Bx,t), &\medskip \\
\nabla\cdot (\mu(\Bx)\mathcal{H}(\Bx,t))=0,\quad \nabla\cdot (\varepsilon(\Bx)\mathcal{E}(\Bx,t))=\rho(\Bx, t), &\medskip \\
\mathcal{E}(\mathbf{x}, 0)=\mathcal{H}(\mathbf{x}, 0)=0,&
\end{cases}
 \end{equation}
 where $\rho(\Bx, t)=\rho_c(\Bx, t)\chi(\Sigma_c)\in H^1(\mathbb{R}_+, L^2(\Sigma_c))$ stands for the charge density of the Earth core, and $\mathbf{v}\in L^\infty(\Sigma)$ is the fluid velocity of the Earth. In \eqref{eq:pss}, $\mathbf{v}\times\mu\mathcal{H}$ is the so-called motional electromotive force generated by the rotation of the Earth.

Next we suppose that a collection of magnetized anomalies presented in the shell of the Earth. Let $D_l$, $l=1,2,\ldots, l_0$, denote the magnetized anomalies, where $D_l$, $1\leq l\leq l_0$ is a simply-connected Lipschitz domain such that
\begin{figure}
\begin{center}
  \includegraphics[width=2.3in,height=1.8in]{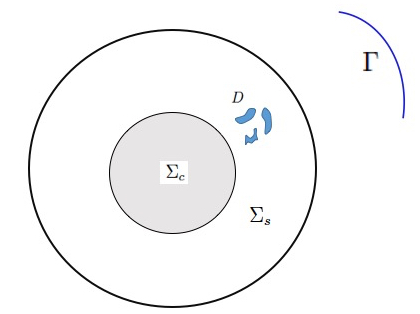}
  \end{center}
  \caption{Schematic illustration of identifying magnetized anomalies using the geomagnetic monitoring. \label{fig1}}
\end{figure}
the corresponding material parameters are given by $\varepsilon_l, \mu_l$ and $\sigma_l$. It is assumed that $\varepsilon_l, \mu_l$ and $\sigma_l$ are all positive constants with $\mu_l\neq \mu_0$, $1\leq l\leq l_0$. With the presence of the magnetized anomalies $(D_l; \varepsilon_l, \mu_l, \sigma_l)$, $l=1,2,\ldots,l_0$, in the shell of the Earth, the EM medium configuration in the space $\mathbb{R}^3$ is then described by
\beq\label{eq:paradef02}
\begin{split}
\Gs(\Bx)=& \Gs_c(\Bx)\chi(\Sigma_c)+\sum_{l=1}^{l_0} \sigma_l\chi(D_l),\\
\quad\mu(\Bx)=& (\mu_c(\Bx)-\mu_0)\chi(\Sigma_c)+\sum_{l=1}^{l_0} (\mu_l-\mu_0)\chi(D_l)+\mu_0,\\
\varepsilon(\Bx)=& (\varepsilon_c(\Bx)-\varepsilon_0)\chi(\Sigma_c)+(\varepsilon_s(\Bx)-\varepsilon_0)\chi(\Sigma_s\setminus\overline{\bigcup_{l=1}^{l_0}D_l})+\sum_{l=1}^{l_0} (\varepsilon_l-\varepsilon_0)\chi(D_l)+\varepsilon_0.
\end{split}
\eeq
In the sequel, we let $(\mathcal{E}_0, \mathcal{H}_0)$ be the solution to the Maxwell system \eqref{eq:pss} associated with the medium configuration in \eqref{eq:paradef01}, and $(\mathcal{E}, \mathcal{H})$ be the solution to \eqref{eq:pss} associated with \eqref{eq:paradef02}. Let $\Gamma$ be an open surface located away from $\Sigma$. In the current article, we are mainly concerned with the following inverse problem,
\begin{equation}\label{eq:geom1}
\left(\mathcal{H}(\Bx, t)-\mathcal{H}_0(\Bx, t) \right)\bigg|_{(\Bx,t)\in\Gamma\times\mathbb{R}_+}\longrightarrow \bigcup_{l=1}^{l_0} (D_l; \varepsilon_l,\mu_l,\sigma_l).
\end{equation}
That is, one intends to recover the magnetized anomalies by monitoring the change of the geomagnetic field away from the Earth. It is emphasized that in \eqref{eq:geom1}, we do not assume that the medium configuration of the Earth, and the charge density and fluid velocity of the Earth core are known a priori. From a practical point view, the only thing known before the presence of the magnetized anomalies is the monitored geomagnetic field $\mathcal{H}_0$ on $\Gamma$. We would also like to emphasize that in practice, $\mathbb{R}_+$ in \eqref{eq:geom1} can actually be replaced by a finite time interval, and we shall remark this point in Section~\ref{sect:5}. The magnetic anomaly detecting (MAD) technique has been used in various practical applications. The magnetometer that can measure minute variations in the Earth's magnetic field has been used by military forces to detect submarines. The military MAD equipment is a descendent of geomagnetic survey or aeromagnetic survey instruments used to search for minerals by detecting their disturbance of the normal earth-field. The aim of this study is to provide a rigorous mathematical theory for this important applied technology. Indeed, we establish global uniqueness results for the nonlinear inverse problem \eqref{eq:geom1} in certain practically important and generic scenarios.

The rest of the section is devoted to recasting the time-dependent inverse problem \eqref{eq:geom1} to its counterpart in the frequency domain via the Fourier transform approach. We refer to \cite{Leis1,Leis2} for the well-posedness of the forward Maxwell system \eqref{eq:pss}, and in particular the unique existence of a pair of solutions $(\mathcal{E}, \mathcal{H})\in H^1(\mathbb{R}_+^0, H_{loc}(\mathrm{curl},\mathbb{R}^3))^2$. In the sequel, we shall make use of the following temporal Fourier transform for $(\Bx, \omega)\in\mathbb{R}^3\times\mathbb{R}_+$,
\begin{equation}\label{eq:cond1}
\mathbf{J}(\Bx,\omega)=\mathcal{F}_t(\mathcal{J}):=\frac{1}{2\pi}\int_0^\infty \mathcal{J}(\Bx, t) e^{\mathrm{i}\omega t}\ dt, \quad\mathcal{J}=\mathcal{E}\ \ \mbox{or}\ \ \mathcal{H},
\end{equation}
such that $\mathbf{J}(\Bx, \omega)\in H_{loc}(\mathrm{curl},\mathbb{R}^3)$.
Set
\begin{equation}\label{eq:ft2}
\mathbf{E}=\mathcal{F}_t(\mathcal{E}),\ \mathbf{H}=\mathcal{F}_t(\mathcal{H}),\ \mathbf{E}_0=\mathcal{F}_t(\mathcal{E}_0),\ \mathbf{H}_0=\mathcal{F}_t(\mathcal{H}_0),\ \hat{\rho}=\mathcal{F}_t(\rho).
\end{equation}
Throughout the paper, under a certain generic causality condition on the geomagnetic configuration of the Earth, we assume that the Fourier transforms in \eqref{eq:ft2} all exist. In order to appeal for a general inverse problem study, we shall not explore this point in the current article; see also our remark concerning the geomagnetic configuration of the Earth after \eqref{eq:geom1}. With the above assumption,
the time-dependent Maxwell system \eqref{eq:pss} is then reduced to the following time-harmonic system in the frequency domain,
 \begin{equation}\label{eq:pss01}
\left \{
 \begin{array}{ll}
\nabla\times\bH=-\mathrm{i}\omega\varepsilon\bE+\Gs(\bE+\mu\mathbf{v}\times\mathbf{H})  & \mbox{in} \ \ \RR^3,\medskip\\
\nabla\times\bE=\mathrm{i}\omega\mu\bH & \mbox{in} \ \ \RR^3,\medskip\\
\nabla\cdot (\mu\mathbf{H})=0,\quad \nabla\cdot (\varepsilon\bE)=\hat\rho  & \mbox{in} \ \ \RR^3,\medskip\\
 \lim_{\|\Bx\|\rightarrow\infty} \|\Bx\| \big( \sqrt{\mu_0}\bH \times\hat{\Bx}- \sqrt{\varepsilon_0}\bE\big)=0,
 \end{array}
 \right .
 \end{equation}
 where the last limit is the Silver-M\"uller radiation condition and it holds uniformly in all directions $\hat{\Bx}:=\Bx/\|\Bx\|\in\mathbb{S}^{2}$. The Silver-M\"uller radiation condition characterizes the outward radiating waves (cf. \cite{Leis2}). That is, in order to establish the recovery results for the inverse problem \eqref{eq:geom1}, from a practical point of view, we shall only make use of the measurement data from the outward radiating EM waves.
We refer to \cite{LRX} for the related study on the unique existence of $(\bE,\bH)\in H_{loc}(\mathrm{curl}, \mathbb{R}^3)^2$ to \eqref{eq:pss01}. In particular, we know that there holds the following asymptotic expansion as $\|\Bx\|\rightarrow+\infty$ (cf. \cite{CK,Ned}),
\begin{equation}\label{eq:farf1}
\mathbf{H}(\Bx)=\frac{e^{\mathrm{i}k_0\|\Bx\|}}{\|\Bx\|} \mathbf{H}^\infty(\hat{\Bx})+\mathcal{O}\left(\frac{1}{\|\Bx\|^2} \right),\quad k_0:=\omega\sqrt{\varepsilon_0\mu_0}.
\end{equation}
In a similar manner, one can derive the Maxwell system for the EM fields $\mathbf{E}_0$ and $\mathbf{H}_0$, as well as the corresponding magnetic far-field pattern $\mathbf{H}_0^\infty$. The inverse problem \eqref{eq:geom1} can then be recast as
\begin{equation}\label{eq:geom2}
\mathbf{H}^\infty(\hat{\Bx};\omega)-\mathbf{H}_0^\infty(\hat{\Bx};\omega)\big|_{(\hat{\Bx},\omega)\in \hat{\Gamma}\times\mathbb{R}_+}\longrightarrow \bigcup_{l=1}^{l_0} (D_l; \varepsilon_l,\mu_l,\sigma_l),\ \ \hat{\Gamma}:=\{\hat{\Bx}\in\mathbb{S}^2; \hat{\Bx}=\frac{\Bx}{\|\Bx\|},\ \Bx\in\Gamma\}\subset\mathbb{S}^2.
\end{equation}
On the other hand, by the real-analyticity of $\mathbf{H}^\infty$ and $\mathbf{H}_0^\infty$, together with the Rellich theorem (cf. \cite{CK}), we know that the inverse problem \eqref{eq:geom2} is equivalent to the following one,
\begin{equation}\label{eq:geom3}
\mathbf{H}(\Bx;\omega)-\mathbf{H}_0(\Bx;\omega)\big|_{(\Bx,\omega)\in\partial\Sigma\times\mathbb{R}_+}\longrightarrow \bigcup_{l=1}^{l_0} (D_l; \varepsilon_l,\mu_l,\sigma_l).
\end{equation}
We are mainly concerned with the theoretical unique recovery results for the aforementioned inverse problem \eqref{eq:geom2}, or equivalently \eqref{eq:geom3}. It is remarked that in our subsequent study of \eqref{eq:geom2} or \eqref{eq:geom3}, we actually make use of the low-frequency asymptotics of the geomagnetic fields. That is, in \eqref{eq:geom2} and \eqref{eq:geom3}, it is sufficient for us to know the geomagnetic fields with frequencies from a neighbourhood of the zero frequency.

The rest of the paper is organized as follows. In Section 2, we derive the asymptotic low-frequency approximation of the background magnetic field $\bH_0$ and show that the leading-order term is conservative with an explicit form of the corresponding potential function. Section 3 is devoted to the two-level asymptotic approximations of the perturbed magnetic field $\bH$. One approximation is derived in terms of the frequency and the other one is derived in terms of the size of anomalies. Finally, in Section 4, we establish the unique recovery results on identifying the positions as well as the material properties of the magnetized anomalies.

\section{Integral representation and asymptotics of $\bH_0$}
In this section, we present the integral representation of the magnetic field generated by the Earth core. We are mainly concerned with the magnetic filed distribution outside the Earth core, namely, $\bH_0$ in $\RR^3\setminus\overline{\Sigma_c}$. Before proceeding, we present some preliminary knowledge on layer potential techniques (cf. \cite{HK07:book, Ned}).

\subsection{Layer potentials}
Let $ \Gamma_k$ be the fundamental solution to the PDO $(\Delta+k^2)$, that is given by
\begin{equation}\label{Gk} \ds \Gamma_k
(\Bx) = -\frac{e^{ik\|\Bx\|}}{4 \pi \|\Bx\|},\ \ \Bx\in\mathbb{R}^3\ \ \mbox{and}\ \ \Bx\neq \mathbf{0}.
 \end{equation}
For any bounded domain $B\subset \RR^3$, we denote by $\Scal_B^k: H^{-1/2}(\p B)\rightarrow H^{1}(\RR^3\setminus\p B)$ the single layer potential operator given by
\beq\label{eq:layperpt1}
\Scal_{B}^{k}[\phi](\Bx):=\int_{\p B}\Gamma_k(\Bx-\By)\phi(\By)d s_\By,
\eeq
and $\Kcal_B^{k}: H^{1/2}(\p B)\rightarrow H^{1/2}(\p B)$ the Neumann-Poincar\'e operator
\beq\label{eq:layperpt2}
\Kcal_{B}^{k}[\phi](\Bx):=\mbox{p.v.}\quad\int_{\p B}\frac{\p\Gamma_k(\Bx-\By)}{\p \nu_y}\phi(\By)d s_\By,
\eeq
where p.v. stands for the Cauchy principle value. In \eqref{eq:layperpt2} and also in what follows, unless otherwise specified, $\nu$ signifies the exterior unit normal vector to the boundary of the concerned domain.
It is known that the single layer potential operator $\Scal_B^k$ satisfies the following trace formula
\beq \label{eq:trace}
\frac{\p}{\p\nu}\Scal_B^{k}[\phi] \Big|_{\pm} = (\pm \frac{1}{2}I+
(\Kcal_{B}^{k})^*)[\phi] \quad \mbox{on} \, \p B, \eeq
where $(\Kcal_{B}^{k})^*$ is the adjoint operator of $\Kcal_B^{k}$.
In addition, for a density $\Phi \in \mathrm{TH}(\mbox{div}, \p B)$, we define the
vectorial single layer potential by
\beq\label{defA}
\ds\mathcal{A}_B^{k}[\Phi](\Bx) := \int_{\p B} \Gamma_{k}(\Bx-\By)
\Phi(\By) d s_\By, \quad \Bx \in \RR^3\setminus\p B.
\eeq
It is known that $\nabla\times\mathcal{A}_B^{k}$ satisfies the following jump formula
\begin{equation}\label{jumpM}
\nu \times \nabla \times \mathcal{A}_B^{k}[\Phi]\big\vert_\pm = \mp \frac{\Phi}{2} + \mathcal{M}_B^{k}[\Phi] \quad \mbox{on}\,  \p B,
\end{equation}
where \begin{equation*}
\forall \Bx\in \p B, \quad \nu \times \nabla \times \mathcal{A}_B^{k}[\Phi]\big\vert_\pm (\Bx)= \lim_{t\rightarrow 0^+} \nu \times \nabla \times \mathcal{A}_B^{k}[\Phi] (\Bx\pm t \nu)
\end{equation*}
and
\beq\label{Mk}
\mathcal{M}^{k}_B[\Phi](\Bx)= \mbox{p.v.}\quad\nu  \times \nabla \times \int_{\p B} \Gamma_{k}(\Bx-\By) \Phi(\By) d s_\By.
\eeq
We also define $\mathcal{L}^k_B: \mathrm{TH}({\rm div}, \p B) \rightarrow \mathrm{TH}({\rm div}, \p B)$ by
\beq\label{Lk}
\mathcal{L}^k_B[\Phi](\Bx):= \nu_\Bx  \times \nabla\times\nabla\times \mathcal{A}_B^k[\Phi](\Bx)=\nu_\Bx  \times \big(k^2\mathcal{A}_B^k[\Phi](\Bx)
+\nabla\nabla\cdot\mathcal{A}_B^k[\Phi](\Bx)\big),
\eeq
where we have made use of the formula $\nabla\times\nabla\times=-\Delta+\nabla\nabla\cdot$, which shall be frequently used in the sequel.

Next we introduce some function spaces on the boundary surface for the subsequent use.
Let  $\nabla_{\p B}\cdot$ denote the surface divergence. Denote by $L_T^2(\p B):=\{\Phi\in {L^2(\p B)}^3, \nu\cdot \Phi=0\}$. Let $H^s(\partial B)$ be the usual Sobolev space of order $s\in\mathbb{R}$ on $\partial B$. Set
\begin{align*}
\mathrm{TH}({\rm div}, \p B):&=\Bigr\{ {\Phi} \in L_T^2(\partial B):
\nabla_{\partial B}\cdot {\Phi} \in L^2(\partial B) \Bigr\},\\
\mathrm{TH}({\rm curl}, \p B):&=\Bigr\{ {\Phi} \in L_T^2(\partial B):
\nabla_{\partial B}\cdot ({\Phi}\times {\nu}) \in L^2(\partial B) \Bigr\},
\end{align*}
endowed with the norms
\begin{align*}
&\|{\Phi}\|_{\mathrm{TH}({\rm div}, \p B)}=\|{\Phi}\|_{L^2(\p B)}+\|\nabla_{\p B}\cdot {\Phi}\|_{L^2(\p B)}, \\
&\|{\Phi}\|_{\mathrm{TH}({\rm curl}, \p B)}=\|{\Phi}\|_{L^2(\p B)}+\|\nabla_{\p B}\cdot({\Phi}\times \nu)\|_{L^2(\p B)}.
\end{align*}
{We mention that $\frac{I}{2}\pm \Mcal_{B}^{k}$ is invertible on $\rm{TH}({\rm div}, \p B)$} when $k$ is sufficiently small (see e.g., \cite{ADM14,T}). In the following, if $k=0$, we formally set $\Gamma_{k}$ introduced in \eqref{Gk} to be $\Gamma_0=-1/(4\pi\|\Bx\|)$, and the other integral operators introduced above can also be formally defined when $k=0$. Finally, we define $k_s:=\omega\sqrt{\mu_0\varepsilon_s}$.

\subsection{Integral representation and approximation}
Let $(\bE_0, \bH_0)$ be the solution to \eqnref{eq:paradef01} and \eqnref{eq:pss01}. In this section, we consider the steady fields generated by the Earth's core for $\omega\in [0, \omega_0)$, with $\omega_0>0$ a fixed and sufficiently small real number. For the subsequent use of the inverse problem study, we shall derive the integral representation and the low-frequency asymptotics of the fields.

By using the transmission conditions across $\p \Sigma_{s}$, $(\bE_0, \bH_0)$ is the solution to the following transmission problem
\beq\label{eq:pss02}
\left \{
 \begin{array}{ll}
\displaystyle{\nabla\times\bH_0=-i\omega\varepsilon_c\bE_0+\Gs_c(\bE+\mu_c\mathbf{v}\times\bH_0)},  & \mbox{in} \ \ \Sigma_c\medskip\\
\displaystyle{\nabla\times\bE_0=i\omega\mu_c\bH_0,} & \mbox{in} \ \ \Sigma_c,\medskip \\
\displaystyle{\nabla\cdot (\mu_c\bH_0)=0,\quad \nabla\cdot (\varepsilon_c\bE_0)=\hat\rho}, &\mbox{in} \ \ \Sigma_c,\medskip\\
\displaystyle{{[}\nu\times\bE_0]={[}\nu\times\bH_0]=0,} & \mbox{on} \ \ \p \Sigma_s,\medskip\\
 \displaystyle{\nabla\times\bH_0=-i\omega\varepsilon_s\bE_0,  \quad \nabla\times\bE_0=i\omega\mu_0\bH_0,} & \mbox{in} \ \ \Sigma_s,\medskip\\
\displaystyle{ \nabla\times\bH_0=-i\omega\varepsilon_0\bE_0,  \quad \nabla\times\bE_0=i\omega\mu_0\bH_0,} & \mbox{in} \ \ \RR^3\setminus\overline{\Sigma},\medskip\\
 \displaystyle{\lim_{\|\Bx\|\rightarrow\infty} \|\Bx\| \big( \sqrt{\mu_0}\bH_0 \times\hat{\Bx}- \sqrt{\varepsilon_0}\bE_0\big)=0,}
 \end{array}
 \right .
\eeq
where $[{\nu} \times{\bE_0}]$ and $[{\nu} \times{\bH_0}]$ denote the jumps of ${\nu} \times{\bE_0}$
and ${\nu} \times{\bH_0}$ along $\p \Sigma_s$, namely,
 \begin{equation*}
 [\nu\times{\bE_0}]=(\nu\times \bE_0)\bigr|_+ -(\nu \times \bE_0)\bigr|_-,\quad [\nu\times{\bH_0}]=(\nu\times \bH_0)\bigr|_+ -(\nu\times \bH_0)\bigr|_-.
  \end{equation*}
Using the potential theory, the solution $(\bE_0,\bH_0)$ to \eqnref{eq:pss02}, outside $\Sigma_c$, can be represented by
\beq\label{eq:repre01}
\bE_0=\left\{\begin{split}
&\mu_0\nabla\times\Acal_{\Sigma_c}^{k_0}[\Phi_C]+\mu_0\nabla\times\Acal_{\Sigma}^{k_0}[\Phi_S]+\nabla\times\nabla \times\Acal_{\Sigma}^{k_0}[\Psi_S] \quad \mbox{in} \ \RR^3\setminus\overline{\Sigma}, \\
&\mu_0\nabla\times\Acal_{\Sigma_c}^{k_s}[\Phi_C]+\mu_0\nabla\times\Acal_{\Sigma}^{k_s}[\Phi_S]+\nabla\times\nabla \times\Acal_{\Sigma}^{k_s}[\Psi_S] \quad \mbox{in} \ \Sigma_s,
\end{split}
\right.
\eeq
and
\beq\label{eq:repre02}
\bH_0=\left\{\begin{split}
&-i\omega^{-1}\Big(\nabla\times\nabla\times\Acal_{\Sigma_c}^{k_0}[\Phi_C]+\nabla\times\nabla\times\Acal_{\Sigma}^{k_0}[\Phi_S]+\\
& \hspace*{4cm}\omega^2\varepsilon_0\nabla\times\Acal_{\Sigma}^{k_0}[\Psi_S]\Big) \ \mbox{in} \ \ \RR^3\setminus\overline{\Sigma}, \\
&-i\omega^{-1}\Big(\nabla\times\nabla\times\Acal_{\Sigma_c}^{k_s}[\Phi_C]+\nabla\times\nabla\times\Acal_{\Sigma}^{k_s}[\Phi_S]+\\
&\hspace*{4cm}\omega^2\varepsilon_s\nabla \times\Acal_{\Sigma}^{k_s}[\Psi_S]\Big)\ \mbox{in} \ \ \Sigma_s,
\end{split}
\right.
\eeq
where by the transmission conditions across $\p\Sigma$, $(\Phi_S,\Psi_S, \Phi_C)\in \mathrm{TH}(\mbox{div},\p \Sigma)\times \mathrm{TH}(\mbox{div},\p \Sigma)\times \mathrm{TH}(\mbox{div},\p \Sigma_c)$ satisfy
\beq\label{eq:Phiform01}
\left\{
\begin{split}
&\mu_0\Big(-I+\Mcal_{\Sigma}^{k_0}-\Mcal_{\Sigma}^{k_s}\Big)[\Phi_S]+\Big(\mathcal{L}_{\Sigma}^{k_0}-\mathcal{L}_{\Sigma}^{k_s}\Big)[\Psi_S]\\
=& \mu_0\big(\Mcal_{\Sigma,\Sigma_c}^{k_s}-\Mcal_{\Sigma,\Sigma_c}^{k_0}\big)[\Phi_C]\ \mbox{on} \ \ \p \Sigma, \\
&\omega^2\Big(-\frac{\varepsilon_s+\varepsilon_0}{2}I+\varepsilon_0\Mcal_{\Sigma}^{k_0}-\varepsilon_s\Mcal_{\Sigma}^{k_s}\Big)[\Psi_S]+
\Big(\mathcal{L}_{\Sigma}^{k_0}-\mathcal{L}_{\Sigma}^{k_s}\Big)[\Phi_S]\\
=& \big(\mathcal{L}_{\Sigma,\Sigma_c}^{k_s}-\mathcal{L}_{\Sigma,\Sigma_c}^{k_0}\big)[\Phi_C] \ \mbox{on} \ \ \p \Sigma,\\
&\mu_0\Big(-\frac{I}{2}+\Mcal_{\Sigma_c}^{k_s}\Big)[\Phi_C]+\mu_0\Mcal_{\Sigma_c,\Sigma}^{k_s}[\Phi_S]+\mathcal{L}_{\Sigma_c,\Sigma}^{k_s}[\Psi_S]
=\nu\times\bE_0\ \mbox{on} \ \ \p \Sigma_c,
\end{split}
\right.
\eeq
with $\Mcal_{\Sigma_c,\Sigma}^{k}$, $\mathcal{L}_{\Sigma_c,\Sigma}^{k}$, $\Mcal_{\Sigma,\Sigma_c}^{k}$ and $\mathcal{L}_{\Sigma,\Sigma_c}^{k}$ ($k=k_0, k_s$) defined by
\begin{align*}
\Mcal_{\Sigma,\Sigma_c}^{k}:= \nu\times\nabla\times\Acal_{\Sigma_c}^{k}\Big|_{\p \Sigma}, \quad& \mathcal{L}_{\Sigma,\Sigma_c}^{k}:=\nu\times\nabla\times\nabla\times\Acal_{\Sigma_c}^{k}\Big|_{\p \Sigma}, \\
\Mcal_{\Sigma_c,\Sigma}^{k}:= \nu\times\nabla\times\Acal_{\Sigma}^{k}\Big|_{\p \Sigma_c}, \quad&
\mathcal{L}_{\Sigma_c,\Sigma}^{k}:=\nu\times\nabla\times\nabla\times\Acal_{\Sigma}^{k}\Big|_{\p \Sigma_c}.
\end{align*}
By direct asymptotic analysis one can obtain that (see also \cite{ADM14,DHU:17})
\beq\label{eq:asymtmp01}
\Mcal_{\Sigma}^{k}=\Mcal_{\Sigma}^{0}+\Ocal(\omega^2), \quad \nabla\times\Acal_{\Sigma}^{k}=\nabla\times\Acal_{\Sigma}^{0}+\Ocal(\omega^2).
\eeq
One can also verify that
\beq\label{eq:asymtmp02}
\mathcal{L}_{\Sigma,\Sigma_c}^{k}=\nu\times D^2\Acal_{\Sigma_c}^{0}\Big|_{\p \Sigma}+k^2\nu\times(\Acal_{\Sigma_c}^{0}+D^2\mathcal{B}_{\Sigma_c})\Big|_{\p \Sigma}+\Ocal(\omega^3)
\eeq
and similar results hold for $\mathcal{L}_{\Sigma_c,\Sigma}^{k}$, $\Mcal_{\Sigma,\Sigma_c}^{k}$ and $\Mcal_{\Sigma,\Sigma_c}^{k}$. Here $D^2$ denotes $\nabla\nabla\cdot$ and $\mathcal{B}_{\Sigma_c}:\rm{TH}({\rm div},\p \Sigma_c)\rightarrow H^2(\Sigma \setminus\overline{\Sigma_c})^3$ is defined by
\beq\label{eq:defbdop01}
\mathcal{B}_{\Sigma_c}[\Phi](\Bx):=\frac{1}{4\pi}\int_{\p \Sigma_c}\|\Bx-\By\|\Phi(\By)ds_\By, \quad \Phi\in \rm{TH}({\rm div},\p \Sigma_c).
\eeq
We next show that \eqnref{eq:Phiform01} is uniquely solvable, and to that end we first prove a useful lemma.

\begin{lem}\label{le:pri01}
There holds the following asymptotic expansion
\beq\label{eq:lepri01}
\mathcal{L}_{\Sigma_c,\Sigma}^{k_s}[\Psi_S]=\nu\times\nabla\Scal_{\Sigma}^0[\nabla_{\p B}\cdot\Psi_S]\Big|_{\p \Sigma_c}+\Ocal(\omega^2\|\Psi_S\|_{\rm{TH}({\rm div},\p \Sigma)}).
\eeq
\end{lem}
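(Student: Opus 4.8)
The plan is to reduce $\mathcal{L}_{\Sigma_c,\Sigma}^{k_s}$ to a scalar single-layer potential and then Taylor-expand its kernel at $k_s=0$. Since $\Sigma_c\Subset\Sigma$, the surfaces $\p\Sigma_c$ and $\p\Sigma$ are positively separated, so for $\Bx\in\p\Sigma_c$ the kernel $\Gamma_{k_s}(\Bx-\By)$ with $\By\in\p\Sigma$ is smooth in $\Bx$ and no singular-integral subtlety arises; in particular $\Acal_\Sigma^{k_s}$ restricted to $\p\Sigma_c$ is smoothing, which is what lets us bound a remainder on $\p\Sigma_c$ by a norm of $\Psi_S$ on $\p\Sigma$. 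Using $\nabla\times\nabla\times=-\Delta+\nabla\nabla\cdot$ together with $(\Delta+k_s^2)\Gamma_{k_s}=0$ away from the diagonal — exactly as in the definition \eqref{Lk} — one has, on $\p\Sigma_c$,
\[
\mathcal{L}_{\Sigma_c,\Sigma}^{k_s}[\Psi_S]=\nu\times\Big(k_s^2\,\Acal_\Sigma^{k_s}[\Psi_S]+\nabla\nabla\cdot\Acal_\Sigma^{k_s}[\Psi_S]\Big)\Big|_{\p\Sigma_c}.
\]
Because $k_s^2=\omega^2\mu_0\varepsilon_s$ and, by the smoothing property, $\|\Acal_\Sigma^{k_s}[\Psi_S]\|_{\mathrm{TH}(\mathrm{div},\p\Sigma_c)}\le C\|\Psi_S\|_{L^2(\p\Sigma)}$ uniformly for $\omega\in[0,\omega_0)$, the first term is already $\Ocal(\omega^2\|\Psi_S\|_{\mathrm{TH}(\mathrm{div},\p\Sigma)})$, so only the $\nabla\nabla\cdot$ term needs work.

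For that term I would invoke the standard identity $\nabla\cdot\Acal_\Sigma^{k}[\Psi_S]=\Scal_\Sigma^{k}[\nabla_{\p\Sigma}\cdot\Psi_S]$, valid for any tangential density $\Psi_S\in\mathrm{TH}(\mathrm{div},\p\Sigma)$ (here $\nabla_{\p\Sigma}\cdot$ is the surface divergence $\nabla_{\p B}\cdot$ with $B=\Sigma$). It follows by writing $\nabla_\Bx\Gamma_k(\Bx-\By)=-\nabla_\By\Gamma_k(\Bx-\By)$, decomposing $\nabla_\By$ into its surface component $\nabla_{\p\Sigma,\By}$ and its normal component, dropping the normal component because $\nu_\By\cdot\Psi_S(\By)=0$, and integrating by parts on the closed surface $\p\Sigma$ (no boundary term). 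Hence $\nabla\nabla\cdot\Acal_\Sigma^{k_s}[\Psi_S]=\nabla\Scal_\Sigma^{k_s}[\nabla_{\p\Sigma}\cdot\Psi_S]$, and the task reduces to comparing $\nabla\Scal_\Sigma^{k_s}[\nabla_{\p\Sigma}\cdot\Psi_S]$ with $\nabla\Scal_\Sigma^{0}[\nabla_{\p\Sigma}\cdot\Psi_S]$ on $\p\Sigma_c$.

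The one structural point — and the reason the error is $\Ocal(\omega^2)$ rather than $\Ocal(\omega)$ — is the following. Writing $\Gamma_{k_s}(\Bz)-\Gamma_0(\Bz)=-\big(e^{\mathrm{i}k_s\|\Bz\|}-1\big)/(4\pi\|\Bz\|)=-\mathrm{i}k_s/(4\pi)+\Ocal(k_s^2\|\Bz\|)$, uniformly for $\Bz$ in the compact set $\{\Bx-\By:\Bx\in\p\Sigma_c,\ \By\in\p\Sigma\}$ (which avoids the origin), the leading contribution is the \emph{constant} $-\mathrm{i}k_s/(4\pi)$. This constant is annihilated by the outer gradient $\nabla_\Bx$; equivalently, it pairs with $\nabla_{\p\Sigma}\cdot\Psi_S$ to give $-\mathrm{i}k_s/(4\pi)\int_{\p\Sigma}\nabla_{\p\Sigma}\cdot\Psi_S\,ds_\By=0$ because $\p\Sigma$ is closed. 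A direct differentiation then shows $\nabla_\Bx\big(\Gamma_{k_s}-\Gamma_0\big)(\Bx-\By)=\Ocal(k_s^2)=\Ocal(\omega^2)$, and similarly for all higher $\Bx$-derivatives, uniformly on $\p\Sigma_c\times\p\Sigma$; hence $\nabla\Scal_\Sigma^{k_s}[\nabla_{\p\Sigma}\cdot\Psi_S]-\nabla\Scal_\Sigma^{0}[\nabla_{\p\Sigma}\cdot\Psi_S]=\Ocal\big(\omega^2\|\nabla_{\p\Sigma}\cdot\Psi_S\|_{L^2(\p\Sigma)}\big)$ in every $C^m(\p\Sigma_c)$, hence in $\mathrm{TH}(\mathrm{div},\p\Sigma_c)$. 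Combining this with the estimate of the $k_s^2$-term, crossing with $\nu$, and noting that $\|\Psi_S\|_{\mathrm{TH}(\mathrm{div},\p\Sigma)}$ controls both $\|\Psi_S\|_{L^2(\p\Sigma)}$ and $\|\nabla_{\p\Sigma}\cdot\Psi_S\|_{L^2(\p\Sigma)}$, yields \eqref{eq:lepri01}. No genuine obstacle arises — the separation of $\p\Sigma_c$ from $\p\Sigma$ removes every singularity — and the only thing one must not overlook is precisely the closed-surface cancellation $\int_{\p\Sigma}\nabla_{\p\Sigma}\cdot\Psi_S\,ds_\By=0$ (equivalently, that $\nabla_\Bx$ kills the $\Ocal(\omega)$ term), which is what preserves the claimed second order in $\omega$; the remaining work is routine bookkeeping of the uniformity of the $\Ocal$-bounds over $\omega\in[0,\omega_0)$.
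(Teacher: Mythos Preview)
Your proof is correct and follows essentially the same approach as the paper: both use the decomposition $\nabla\times\nabla\times=k_s^2+\nabla\nabla\cdot$ (the paper via the pre-computed expansion \eqref{eq:asymtmp02}), the integration-by-parts identity $\nabla\cdot\Acal_\Sigma^{k}[\Psi_S]=\Scal_\Sigma^{k}[\nabla_{\p\Sigma}\cdot\Psi_S]$, and a low-frequency expansion of the kernel. The only difference is the order of operations---the paper first passes to $k_s=0$ via \eqref{eq:asymtmp02} and then integrates by parts on $\Acal_\Sigma^{0}$, whereas you integrate by parts at wave number $k_s$ and then expand $\nabla\Scal_\Sigma^{k_s}$; your version has the virtue of making explicit why the $\Ocal(\omega)$ term drops out (the constant $-\mathrm{i}k_s/(4\pi)$ is killed by $\nabla_\Bx$), which the paper leaves implicit in \eqref{eq:asymtmp02}.
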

\begin{proof}
By using \eqnref{eq:asymtmp02} and integration by parts, one has by straightforward asymptotic analysis that
\beq\label{eq:thpf04}
\begin{split}
\mathcal{L}_{\Sigma_c,\Sigma}^{k_s}[\Psi_S]=&\nu\times D^2\Acal_{\Sigma}^{0}[\Psi_S]\Big|_{\p \Sigma_c}+\Ocal(\omega^2\|\Psi_S\|_{\rm{TH}({\rm div},\p \Sigma)})\\
=&\nu\times\nabla\nabla\cdot\Acal_{\Sigma}^{0}[\Psi_S]\Big|_{\p \Sigma_c}+\Ocal(\omega^2\|\Psi_S\|_{\rm{TH}({\rm div},\p \Sigma)})\\
=&\nu\times\nabla\int_{\p \Sigma}\nabla\Gamma_0(\cdot-\By)\cdot\Psi_S(\By)ds_\By+\Ocal(\omega^2\|\Psi_S\|_{\rm{TH}({\rm div},\p \Sigma)}) \\
=&-\nu\times\nabla\int_{\p\Sigma}\nabla_\By\Gamma_0(\cdot-\By)\nabla_{\p B}\cdot\Psi_S(\By)ds_\By+\Ocal(\omega^2\|\Psi_S\|_{\rm{TH}({\rm div},\p \Sigma)})\\
=&\nu\times\nabla\int_{\p\Sigma}\Gamma_0(\cdot-\By)\nabla_{\p B}\cdot\Psi_S(\By)ds_\By+\Ocal(\omega^2\|\Psi_S\|_{\rm{TH}({\rm div},\p \Sigma)}).
\end{split}
\eeq
The proof is complete.
\end{proof}
\begin{lem}\label{le:uniqueinteg01}
$(\Phi_S,\Psi_S, \Phi_C)\in \mathrm{TH}({\rm div},\p \Sigma)\times \mathrm{TH}({\rm div},\p \Sigma)\times \mathrm{TH}({\rm div},\p \Sigma_c)$ is uniquely solvable in \eqnref{eq:Phiform01} for all sufficiently small $\omega\in\mathbb{R}_+$.
\end{lem}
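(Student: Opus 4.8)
The plan is to show that the linear operator defined by the left-hand side of \eqref{eq:Phiform01} is a compact perturbation of an invertible operator, and then invoke the Fredholm alternative together with an injectivity argument. First I would collect the system into a single operator equation of the form $\mathbb{T}_\omega[\Phi_S,\Psi_S,\Phi_C]=\mathbb{F}$ acting on $\mathrm{TH}(\mathrm{div},\p\Sigma)\times\mathrm{TH}(\mathrm{div},\p\Sigma)\times\mathrm{TH}(\mathrm{div},\p\Sigma_c)$, where $\mathbb{F}=(\mu_0(\Mcal_{\Sigma,\Sigma_c}^{k_s}-\Mcal_{\Sigma,\Sigma_c}^{k_0})[\Phi_C_{\mathrm{data}}],\dots,\nu\times\bE_0)$. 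Note that in the third equation the unknown $\Phi_C$ appears through $-\tfrac{I}{2}+\Mcal_{\Sigma_c}^{k_s}$, which is invertible on $\mathrm{TH}(\mathrm{div},\p\Sigma_c)$ for small $\omega$ (stated in the excerpt); the off-diagonal coupling operators $\Mcal_{\Sigma,\Sigma_c}^{k}$, $\mathcal{L}_{\Sigma,\Sigma_c}^{k}$, $\Mcal_{\Sigma_c,\Sigma}^{k}$, $\mathcal{L}_{\Sigma_c,\Sigma}^{k}$ have smooth kernels (since $\p\Sigma$ and $\p\Sigma_c$ are disjoint) and hence are smoothing, thus compact, operators between the relevant boundary Sobolev spaces.

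Next I would treat the $2\times 2$ block in $(\Phi_S,\Psi_S)$ on $\p\Sigma$. Using \eqref{eq:asymtmp01} and Lemma~\ref{le:pri01}, together with the analogous expansions for $\mathcal{L}_{\Sigma}^{k_0}-\mathcal{L}_{\Sigma}^{k_s}$, I would extract the leading-order behaviour in $\omega$. The key cancellations are: $\Mcal_{\Sigma}^{k_0}-\Mcal_{\Sigma}^{k_s}=\Ocal(\omega^2)$, and $\mathcal{L}_{\Sigma}^{k_0}-\mathcal{L}_{\Sigma}^{k_s}=\Ocal(\omega^2)$ after accounting for the $k^2$ terms in \eqref{eq:asymtmp02}, because the singular $\nu\times D^2\Acal_{\Sigma}^{0}$ part is $k$-independent. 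Consequently the first equation of \eqref{eq:Phiform01} at leading order reads $-\mu_0[\Phi_S]+\Ocal(\omega^2)[\Phi_S]+\Ocal(\omega^2)[\Psi_S]=\Ocal(\omega^2)[\Phi_C]$, so $-\mu_0 I$ is the principal part in the $\Phi_S$ slot — manifestly invertible. For the $\Psi_S$ slot, after dividing the second equation by $\omega^2$, the principal part is $-\tfrac{\varepsilon_s+\varepsilon_0}{2}I+\varepsilon_0\Mcal_{\Sigma}^{0}-\varepsilon_s\Mcal_{\Sigma}^{0}+\Ocal(\omega^2)=-\tfrac{\varepsilon_s+\varepsilon_0}{2}I+(\varepsilon_0-\varepsilon_s)\Mcal_{\Sigma}^{0}+\Ocal(\omega^2)$; since $\Mcal_{\Sigma}^{0}$ is compact on $\mathrm{TH}(\mathrm{div},\p\Sigma)$, this is $cI+\mathrm{compact}$ with $c=-\tfrac{\varepsilon_s+\varepsilon_0}{2}\neq0$, hence Fredholm of index zero. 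Assembling the three equations, $\mathbb{T}_\omega=\mathbb{T}_0+\Ocal(\omega)$ where $\mathbb{T}_0$ is block-triangular with diagonal entries $-\mu_0 I$, $-\tfrac{\varepsilon_s+\varepsilon_0}{2}I+(\varepsilon_0-\varepsilon_s)\Mcal_{\Sigma}^{0}$, and $\mu_0(-\tfrac{I}{2}+\Mcal_{\Sigma_c}^{0})$ modulo compact operators, and with compact (smoothing) off-diagonal coupling. Thus $\mathbb{T}_\omega$ is Fredholm of index zero for all sufficiently small $\omega$.

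It then remains to prove injectivity, i.e. that the homogeneous system with $\bE_0=0$ forces $\Phi_S=\Psi_S=\Phi_C=0$. Here I would run the standard uniqueness argument for the associated transmission problem: a nontrivial null solution of \eqref{eq:Phiform01} produces, via the representations \eqref{eq:repre01}--\eqref{eq:repre02}, a pair $(\bE_0,\bH_0)$ solving the homogeneous Maxwell transmission problem \eqref{eq:pss02} with vanishing source, which satisfies the Silver--Müller radiation condition and hence vanishes identically in $\RR^3\setminus\overline{\Sigma_c}$ by Rellich's lemma and unique continuation; tracing this back through the jump relations \eqref{jumpM} and \eqref{eq:trace} forces the densities to vanish. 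Combined with the index-zero Fredholm property, this yields unique solvability. The main obstacle I anticipate is the second step — correctly isolating the leading-order operator after the $\omega^{-2}$ rescaling and verifying that all the would-be singular contributions in $\mathcal{L}_{\Sigma}^{k_0}-\mathcal{L}_{\Sigma}^{k_s}$ and in the coupling terms genuinely cancel or are of the claimed higher order, so that $\mathbb{T}_0$ really does have the clean $cI+\mathrm{compact}$ structure in each diagonal block; this requires careful bookkeeping with \eqref{eq:asymtmp02} and the operator $\mathcal{B}_{\Sigma_c}$, and the uniform-in-$\omega$ invertibility of $\tfrac{I}{2}\pm\Mcal^{k}$ quoted from \cite{ADM14,T}.
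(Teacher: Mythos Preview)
Your overall Fredholm strategy is reasonable, but the proof differs substantially from the paper's and, as written, the injectivity step has a real gap.

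The paper does not treat the system as a single block operator. Instead it performs an explicit elimination: from the first equation it obtains $\|\Phi_S\|=\Ocal(\omega^2)$ in terms of $\Psi_S,\Phi_C$; substituting into the second equation and taking the surface divergence (using $\nabla_{\p\Sigma}\cdot\Mcal_\Sigma^{k}=-(\Kcal_\Sigma^{k})^*\nabla_{\p\Sigma}\cdot$) gives $\nabla_{\p\Sigma}\cdot\Psi_S$ in terms of $\Phi_C$ via the invertible scalar operator $\frac{\varepsilon_s+\varepsilon_0}{2}I+(\varepsilon_0-\varepsilon_s)(\Kcal_\Sigma^0)^*$; finally, using Lemma~\ref{le:pri01} (which is precisely why only $\nabla_{\p\Sigma}\cdot\Psi_S$, not $\Psi_S$ itself, is needed), the third equation reduces to a single equation for $\Phi_C$ whose leading operator is $-\tfrac{I}{2}+\Mcal_{\Sigma_c}^0+\nu\times\nabla\Scal_\Sigma^0(\Gl_\varepsilon I-(\Kcal_\Sigma^0)^*)^{-1}\tilde\nu\cdot(\nabla\times\Acal_{\Sigma_c}^0)$. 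Injectivity of this static operator is then proved by showing any null density produces a solution of the auxiliary \emph{static} transmission problem \eqref{eq:leunique07}, whose uniqueness is established by an energy computation (Appendix~\ref{app:01}), and then peeling off the layers.

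Your injectivity argument, by contrast, appeals to Rellich and the Silver--M\"uller condition for the full Maxwell system. This does not close: the perturbation structure $\mathbb{T}_\omega=\mathbb{T}_0+\Ocal(\omega)$ requires injectivity of the \emph{limit} operator $\mathbb{T}_0$, which corresponds to $\omega=0$. At $\omega=0$ the problem is curl-free/div-free with a decay condition, not a radiating Maxwell problem, so Rellich's lemma does not apply; this is exactly why the paper sets up and solves the static system \eqref{eq:leunique07}. If instead you try to argue injectivity of $\mathbb{T}_\omega$ at each fixed $\omega>0$, you face a second problem: even granting that the homogeneous transmission problem (with $\nu\times\bE_0=0$ on $\p\Sigma_c$) forces the fields to vanish in $\RR^3\setminus\overline{\Sigma_c}$, the step ``tracing this back through the jump relations forces the densities to vanish'' is not automatic here. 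The representation \eqref{eq:repre01}--\eqref{eq:repre02} uses \emph{three} densities and \emph{two} wavenumbers $k_0,k_s$ on the two sides of $\p\Sigma$, so the vanishing of the field does not obviously determine each of $\Phi_S,\Psi_S,\Phi_C$ separately; one needs an additional argument. The paper sidesteps both issues through the reduction, after which only the single density $\Phi_C$ remains and the static uniqueness is clean.

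A smaller issue: in your block decomposition you wrote $\mathbb{F}$ with an entry involving ``$\Phi_{C_{\mathrm{data}}}$'', but $\Phi_C$ is an unknown, not data; the $\Phi_C$ terms on the right of the first two equations in \eqref{eq:Phiform01} must be moved to the left and enter $\mathbb{T}_\omega$ as (compact) off-diagonal blocks, with $\mathbb{F}=(0,0,\nu\times\bE_0)$.
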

\begin{proof}
Denote by $\nu$ the exterior unit normal vector on $\p \Sigma_c$ and $\tilde\nu$ the exterior unit normal vector on $\p \Sigma$.
First, we recall that $\Mcal_{D}^{k}$, $\mathcal{L}_{D}^{k}$ are bounded on $\rm{TH}({\rm div},\p D)$, for $D=\Sigma,\Sigma_c$ and $k=k_0,k_s$ (see, e.g. \cite{ADM14,T}). From \eqnref{eq:asymtmp01}, \eqnref{eq:asymtmp02} and the first equation in \eqnref{eq:Phiform01}, one can find that
\beq\label{eq:leunique01}
\|\Phi_S\|_{\rm{TH}({\rm div},\p \Sigma)}=\Ocal\left(\omega^2(\|\Psi_S\|_{\rm{TH}({\rm div},\p \Sigma)}+\|\Phi_C\|_{\rm{TH}({\rm div},\p \Sigma_c)})\right).
\eeq
By substituting \eqnref{eq:leunique01} into the second equation of \eqnref{eq:Phiform01} and using asymptotic analysis, one can obtain that
\beq\label{eq:leunique02}
\begin{split}
& \Big(-\frac{\varepsilon_s+\varepsilon_0}{2}I+(\varepsilon_0-\varepsilon_s)\Mcal_{\Sigma}^{0}+\Ocal(\omega^2)\Big)[\Psi_S]\\
 =& \mu_0(\varepsilon_s-\varepsilon_0)\Big(\tilde\nu\times(\Acal_{\Sigma_c}^{0}+D^2\mathcal{B}_{\Sigma_c})+\omega\tilde{\mathcal{B}}_{\Sigma_c}\Big)[\Phi_c],
\end{split}
\eeq
where $\tilde{\mathcal{B}}_{\Sigma_c}$ is a bounded operator from $\rm{TH}({\rm div},\p \Sigma_c)$ to $\rm{TH}({\rm div},\p \Sigma)$. By taking the surface divergence on $\p \Sigma$ of both sides of \eqnref{eq:leunique02} and using the formula $\nabla_{\p \Sigma}\cdot \Mcal_{\Sigma}^{k_0}=-(\Kcal_{\Sigma}^{k_0})^*\nabla_{\p \Sigma}\cdot$ (see \cite{ADM14,DHH:17}), one can further obtain that
\beq\label{eq:leunique03}
\Big(\frac{\varepsilon_s+\varepsilon_0}{2}I+(\varepsilon_0-\varepsilon_s)(\Kcal_{\Sigma}^{0})^*+\Ocal(\omega^2)\Big)[\nabla_{\p \Sigma}\cdot\Psi_S]
=\mu_0(\varepsilon_s-\varepsilon_0)\Big(\tilde\nu\cdot(\nabla\times\Acal_{\Sigma_c}^{0})-\omega\nabla_{\p \Sigma}\cdot\tilde{\mathcal{B}}_{\Sigma_c}\Big)[\Phi_c].
\eeq
Note that $(\varepsilon_s+\varepsilon_0)/2I+(\varepsilon_0-\varepsilon_s)(\Kcal_{\Sigma}^{0})^*$ is invertible on $L^2(\p \Sigma)$, one thus has
\beq\label{eq:leunique04}
\nabla_{\p \Sigma}\cdot\Psi_S=\mu_0\Big(\Gl_{\varepsilon} I-(\Kcal_{\Sigma}^{0})^*\Big)^{-1}\tilde\nu\cdot(\nabla\times\Acal_{\Sigma_c}^{0})[\Phi_c]+\Ocal(\omega\|\Phi_C\|_{\rm{TH}({\rm div},\p \Sigma_c)}),
\eeq
where $\Gl_{\varepsilon}$ is defined by
$$
\Gl_{\varepsilon}:=\frac{\varepsilon_s+\varepsilon_0}{2(\varepsilon_s-\varepsilon_0)}.
$$
Finally, by using \eqnref{eq:lepri01} and substituting \eqnref{eq:leunique01} and \eqnref{eq:leunique04} into the last equation of \eqnref{eq:Phiform01}, along with the help of \eqnref{eq:leunique02}, one can show by direct asymptotic analysis that
\beq\label{eq:leunique05}
\Big(-\frac{I}{2}+\Mcal_{\Sigma_c}^{0}+\nu\times\nabla\Scal_{\Sigma}^0\left(\Gl_{\varepsilon} I-(\Kcal_{\Sigma}^{0})^*\right)^{-1}\tilde\nu\cdot(\nabla\times\Acal_{\Sigma_c}^{0})+\Ocal(\omega)\Big)[\Phi_C]
=\mu_0^{-1}\nu\times\bE_0.
\eeq
Next, we prove the unique solvability of \eqnref{eq:leunique05} when $\omega\in\mathbb{R}_+$ is sufficiently small, which is equivalent to proving the invertibility of the operator
$$-\frac{I}{2}+\Mcal_{\Sigma_c}^{0}+\nu\times\nabla\Scal_{\Sigma}^0\left(\Gl_{\varepsilon} I-(\Kcal_{\Sigma}^{0})^*\right)^{-1}\tilde\nu\cdot(\nabla\times\Acal_{\Sigma_c}^{0})$$
on $\rm{TH}({\rm div},\p \Sigma_c)$. Note that $\nu\times\nabla\Scal_{\Sigma}^0\left(\Gl_{\varepsilon} I-(\Kcal_{\Sigma}^{0})^*\right)^{-1}\tilde\nu\cdot(\nabla\times\Acal_{\Sigma_c}^{0})$ and $\Mcal_{\Sigma_c}^{0}$ are compact operators on
$\rm{TH}({\rm div},\p \Sigma_c)$. By using the Fredholm theory, it is sufficient to prove that the following homogeneous equation possesses only a trivial solution,
\beq\label{eq:leunique06}
\Big(-\frac{I}{2}+\Mcal_{\Sigma_c}^{0}+\nu\times\nabla\Scal_{\Sigma}^0\left(\Gl_{\varepsilon} I-(\Kcal_{\Sigma}^{0})^*\right)^{-1}\tilde\nu\cdot(\nabla\times\Acal_{\Sigma_c}^{0})\Big)[\Phi]=0.
\eeq
By taking the surface divergence of \eqnref{eq:leunique06} one then has
\beq\label{eq:leunique07}
\Big(-\frac{I}{2}-(\Kcal_{\Sigma_c}^{0})^*\Big)[\nabla_{\p \Sigma_c}\cdot\Phi]=0.
\eeq
By using the invertibility of $\frac{I}{2}+(\Kcal_{\Sigma_c}^{0})^*$ on $L^2(\p \Sigma_c)$, one thus has $\nabla_{\p \Sigma_c}\cdot\Phi=0$.
It can be verified that there exists only a trial solution to the following system (see Appendix \ref{app:01})
\beq\label{eq:leunique07}
\left\{
\begin{array}{ll}
\displaystyle{\nabla\times\bE=0, \quad \nabla\cdot\bE=0}, & \mbox{in} \,(\RR^3\setminus\overline{\Sigma})\cup \Sigma_s,\medskip\\
\displaystyle{\tilde\nu\times\bE|_+=\tilde\nu\times\bE|_-,}  & \mbox{on} \, \p \Sigma,\medskip \\
\displaystyle{\varepsilon_0\tilde\nu\cdot\bE|_+=\varepsilon_s\tilde\nu\cdot\bE|_-, }& \mbox{on} \, \p \Sigma,\medskip \\
\displaystyle{\nu\times\bE|_+=0, \quad \int_{\p \Sigma_c}\nu\cdot\bE|_+ =0,} &\mbox{on} \p \Sigma_c,\medskip \\
\displaystyle{\bE(\Bx)=\Ocal(\|\Bx\|^{-2}),  \quad \|\Bx\|\rightarrow \infty.}
\end{array}
\right.
\eeq
Furthermore, one can verify that
\beq\label{eq:leunique08}
\bE=\Big(\nabla\times\Acal_{\Sigma_c}^{0}+\nabla\Scal_{\Sigma}^0\left(\Gl_{\varepsilon} I-(\Kcal_{\Sigma}^{0})^*\right)^{-1}\tilde\nu\cdot(\nabla\times\Acal_{\Sigma_c}^{0})\Big)[\Phi]
\eeq
is also the solution to \eqnref{eq:leunique07}. Hence,
\beq\label{eq:leunique09}
\Big(\nabla\times\Acal_{\Sigma_c}^{0}+\nabla\Scal_{\Sigma}^0\left(\Gl_{\varepsilon} I-(\Kcal_{\Sigma}^{0})^*\right)^{-1}\tilde\nu\cdot(\nabla\times\Acal_{\Sigma_c}^{0})\Big)[\Phi]=0\ \ \mbox{in} \ \ \RR^3\setminus\overline{\Sigma_c}.
\eeq
Then one has $\tilde\nu\cdot\bE=0$ on $\p \Sigma$, which together with the jump formula \eqnref{eq:trace} further implies that
$$
\tilde\nu\cdot(\nabla\times\Acal_{\Sigma_c}^{0}[\Phi])=0 \quad \mbox{on}\ \ \p \Sigma.
$$
Therefore by \eqnref{eq:leunique08} one has
$$
\nabla\times\Acal_{\Sigma_c}^{0}[\Phi]=0 \quad \mbox{in}\ \ \RR^3\setminus\overline\Sigma_c.
$$
Finally, one has $\Phi=0$ by using
$$
\nu\times\nabla\times\Acal_{\Sigma_c}^{0}[\Phi]=\Big(-\frac{I}{2}+\Mcal_{\Sigma_c}^{0}\Big)[\Phi]=0\ \ \mbox{on} \ \ \p \Sigma_c,
$$
and hence the invertibility of $-I/2+\Mcal_{\Sigma_c}^{0}$ on $\rm{TH}({\rm div}, \p \Sigma_c)$.

We have proved the unique solvability of \eqnref{eq:leunique05}. By using \eqnref{eq:leunique01} and \eqnref{eq:leunique02} one can thus find a unique solution to \eqnref{eq:Phiform01}.  This completes the proof.
\end{proof}

By virtue of Lemma \ref{le:uniqueinteg01}, we can derive the following result

\begin{lem}\label{le:stelec01}
Let $(\bE_0,\bH_0)$ be the solution to \eqnref{eq:pss02}. Then for $\omega\in\mathbb{R}_+$ sufficiently small, one has
\beq\label{eq:le0101}
\bH_0=\nabla\Scal_{\Sigma_c}^{0}\Big(\frac{I}{2}+(\Kcal_{\Sigma_c}^{0})^*\Big)^{-1}[\nu\cdot\bH_0|_{\p \Sigma_c}^{+}]+\Ocal(\omega)\ \ \rm{in} \ \ \RR^3\setminus\overline{\Sigma_c}.
\eeq
\end{lem}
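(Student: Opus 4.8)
The plan is to simplify the layer-potential representation \eqnref{eq:repre02} of $\bH_0$ (valid in $\Sigma_s$ and in $\RR^3\setminus\overline{\Sigma}$) by collapsing the double-curl operators, and then to read off the leading order as $\omega\to0^+$. Using $\nabla\times\nabla\times=-\Delta+\nabla\nabla\cdot$, the identity $(\Delta+k^2)\Acal_B^{k}[\Phi]=0$ off $\p B$, and the surface integration-by-parts identity $\nabla\cdot\Acal_B^{k}[\Phi]=\Scal_B^{k}[\nabla_{\p B}\cdot\Phi]$ already exploited in the proof of Lemma~\ref{le:pri01}, one has, for $B=\Sigma_c,\Sigma$ and $k=k_0,k_s$,
$$ \nabla\times\nabla\times\Acal_B^{k}[\Phi]=k^2\Acal_B^{k}[\Phi]+\nabla\Scal_B^{k}[\nabla_{\p B}\cdot\Phi]. $$
Substituting this into both branches of \eqnref{eq:repre02}, the $\Phi_C$-contribution becomes $\nabla\Scal_{\Sigma_c}^{k}[\nabla_{\p \Sigma_c}\cdot\Phi_C]+\Ocal(\omega^2\|\Phi_C\|)$, while the $\Phi_S$- and $\Psi_S$-contributions are $\Ocal(\|\Phi_S\|_{\rm{TH}({\rm div},\p\Sigma)})$ and $\Ocal(\omega^2\|\Psi_S\|_{\rm{TH}({\rm div},\p\Sigma)})$. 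By \eqnref{eq:leunique01} in the proof of Lemma~\ref{le:uniqueinteg01}, $\|\Phi_S\|_{\rm{TH}({\rm div},\p\Sigma)}=\Ocal(\omega^2(\|\Psi_S\|+\|\Phi_C\|))$, and — invoking the uniform (in small $\omega$) unique solvability from Lemma~\ref{le:uniqueinteg01} together with the well-posedness and boundedness of $(\bE_0,\bH_0)$, cf.~\cite{LRX} — both $\|\Phi_C\|$ and $\|\Psi_S\|$ remain bounded as $\omega\to0^+$. Since also $\nabla\Scal_B^{k}=\nabla\Scal_B^{0}+\Ocal(\omega^2)$ (the $\Ocal(\omega)$ part of $\Gamma_k-\Gamma_0$ is a constant, annihilated by $\nabla$), it follows that, both in $\Sigma_s$ and in $\RR^3\setminus\overline{\Sigma}$,
$$ \bH_0=-\mathrm{i}\,\omega^{-1}\,\nabla\Scal_{\Sigma_c}^{0}[\nabla_{\p \Sigma_c}\cdot\Phi_C]+\Ocal(\omega)+\Ocal\big(\omega\,\|\nabla_{\p \Sigma_c}\cdot\Phi_C\|_{L^2(\p \Sigma_c)}\big). $$

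Next I would take the exterior normal trace on $\p\Sigma_c$, approaching from $\Sigma_s$. Applying $\nu\cdot$ and using the jump relation \eqnref{eq:trace}, namely $\nu\cdot\nabla\Scal_{\Sigma_c}^{0}[\psi]|_+=(\tfrac12 I+(\Kcal_{\Sigma_c}^{0})^*)[\psi]$, gives
$$ \nu\cdot\bH_0\big|_{\p\Sigma_c}^{+}=-\mathrm{i}\,\omega^{-1}\Big(\tfrac12 I+(\Kcal_{\Sigma_c}^{0})^*\Big)[\nabla_{\p \Sigma_c}\cdot\Phi_C]+\Ocal(\omega)+\Ocal\big(\omega\,\|\nabla_{\p \Sigma_c}\cdot\Phi_C\|_{L^2(\p\Sigma_c)}\big). $$
Since $\tfrac12 I+(\Kcal_{\Sigma_c}^{0})^*$ is invertible on $L^2(\p\Sigma_c)$, applying its inverse and taking $L^2(\p\Sigma_c)$-norms yields $\omega^{-1}\|\nabla_{\p\Sigma_c}\cdot\Phi_C\|\le C(\|\nu\cdot\bH_0|_{\p\Sigma_c}^{+}\|+\omega)+C\omega\|\nabla_{\p\Sigma_c}\cdot\Phi_C\|$; absorbing the last term for $\omega$ small and using the boundedness of $\nu\cdot\bH_0|_{\p\Sigma_c}^{+}$ produces the bootstrap bound $\|\nabla_{\p\Sigma_c}\cdot\Phi_C\|_{L^2(\p\Sigma_c)}=\Ocal(\omega)$. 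Feeding this back, the two error terms become $\Ocal(\omega)$, and we obtain
$$ -\mathrm{i}\,\omega^{-1}\nabla_{\p\Sigma_c}\cdot\Phi_C=\Big(\tfrac12 I+(\Kcal_{\Sigma_c}^{0})^*\Big)^{-1}[\nu\cdot\bH_0|_{\p\Sigma_c}^{+}]+\Ocal(\omega)\quad\text{in }L^2(\p\Sigma_c). $$

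Finally, substituting this identity (using linearity) into the displayed asymptotic formula for $\bH_0$ from the first paragraph, and using that $\nabla\Scal_{\Sigma_c}^{0}$ is bounded from $L^2(\p\Sigma_c)$ into $H_{loc}(\mathrm{curl},\RR^3\setminus\p\Sigma_c)$ — so that it maps the $\Ocal(\omega)$ remainder to an $\Ocal(\omega)$ remainder — gives exactly \eqnref{eq:le0101}:
$$ \bH_0=\nabla\Scal_{\Sigma_c}^{0}\Big(\tfrac12 I+(\Kcal_{\Sigma_c}^{0})^*\Big)^{-1}[\nu\cdot\bH_0|_{\p\Sigma_c}^{+}]+\Ocal(\omega)\quad\text{in }\RR^3\setminus\overline{\Sigma_c}. $$

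The point that needs the most care is the uniform-in-$\omega$ control: one must know a priori that $(\bE_0,\bH_0)$ — hence all the densities $\Phi_C,\Psi_S,\Phi_S$ entering \eqnref{eq:Phiform01} — remain bounded as $\omega\to0^+$, which is where the uniform unique solvability of Lemma~\ref{le:uniqueinteg01} and the well-posedness of \eqnref{eq:pss01} are used; once that is available, the bootstrap upgrading $\|\nabla_{\p\Sigma_c}\cdot\Phi_C\|$ from $\Ocal(1)$ to $\Ocal(\omega)$ closes the estimate. The remainder — verifying $\Mcal_\Sigma^k=\Mcal_\Sigma^0+\Ocal(\omega^2)$, $\nabla\Scal_B^k=\nabla\Scal_B^0+\Ocal(\omega^2)$, $k^2\Acal_B^k=\Ocal(\omega^2)$, and the precise trace norms in which these hold — is routine but somewhat lengthy bookkeeping of the kind already carried out in the proofs of Lemmas~\ref{le:pri01} and \ref{le:uniqueinteg01}.
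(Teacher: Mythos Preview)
Your proposal is correct and follows essentially the same route as the paper. Both arguments reduce \eqnref{eq:repre02} via $\nabla\times\nabla\times\Acal_B^{k}=k^2\Acal_B^{k}+\nabla\Scal_B^{k}[\nabla_{\p B}\cdot\,]$ together with the bounds \eqnref{eq:leunique01}--\eqnref{eq:leunique02} to obtain $\bH_0=-\mathrm{i}\omega^{-1}\nabla\Scal_{\Sigma_c}^{0}[\nabla_{\p\Sigma_c}\cdot\Phi_C]+\Ocal(\omega)$, and then identify $\nabla_{\p\Sigma_c}\cdot\Phi_C$ in terms of $\nu\cdot\bH_0|_{\p\Sigma_c}^{+}$. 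The only difference is in this last step: the paper reads off \eqnref{eq:thpf02} directly from \eqnref{eq:lethpf0001} (using $\nabla_{\p\Sigma_c}\cdot(\nu\times\bE_0)=-\mathrm{i}\omega\mu_0\,\nu\cdot\bH_0|_{+}$ and $\nabla_{\p\Sigma_c}\cdot\Mcal_{\Sigma_c}^0=-(\Kcal_{\Sigma_c}^0)^*\nabla_{\p\Sigma_c}\cdot$), whereas you recover the same relation by taking the exterior normal trace of the asymptotic formula and bootstrapping via the jump relation \eqnref{eq:trace}. Your route is slightly more self-contained in that it avoids unpacking the inverse in \eqnref{eq:lethpf0001}, at the cost of the extra absorption argument; the paper's route is shorter once \eqnref{eq:lethpf0001} is in hand.
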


\begin{proof}
First, by using Lemma \ref{le:uniqueinteg01}, one has
\beq\label{eq:lethpf0001}
\Phi_C
=\left(\mu_0^{-1}\Big(-\frac{I}{2}+\Mcal_{\Sigma_c}^{0}+\nu\times\nabla\Scal_{\Sigma}^0\left(\Gl_{\varepsilon} I-(\Kcal_{\Sigma}^{0})^*\right)^{-1}\tilde\nu\cdot(\nabla\times\Acal_{\Sigma_c}^{0})\Big)^{-1}+\Ocal(\omega)\right)[\nu\times\bE_0].
\eeq
Note that \eqnref{eq:leunique02} and \eqnref{eq:leunique01} imply
\beq\label{eq:lethpf0002}
\|\Psi_S\|_{\rm{TH}({\rm div},\p \Sigma)}=\Ocal(\|\Phi_C\|_{\rm{TH}({\rm div},\p \Sigma_c)}), \quad \|\Phi_S\|_{\rm{TH}({\rm div},\p \Sigma)}=\Ocal(\omega^2\|\Phi_C\|_{\rm{TH}({\rm div},\p \Sigma_c)}).
\eeq
Hence by using \eqnref{eq:repre02} and straightforward asymptotic analysis, there holds
\beq\label{eq:thpf01}
\bH_0=-i\omega^{-1}\nabla\times\nabla\times\Acal_{\Sigma_c}^{0}[\Phi_C]+\Ocal(\omega)
=-i\omega^{-1}\nabla\Scal_{\Sigma_c}^{0}[\nabla_{\p \Sigma_c}\cdot\Phi_C]+\Ocal(\omega) \ \mbox{in} \ \RR^3\setminus\overline{\Sigma_c}.
\eeq
Moreover, by using \eqnref{eq:lethpf0001} one can show that
\beq\label{eq:thpf02}
\nabla_{\p \Sigma_c}\cdot\Phi_C=i\omega\Big(\frac{I}{2}+(\Kcal_{\Sigma_c}^{0})^*\Big)^{-1}[\nu\cdot\bH_0|_{\p \Sigma_c}^{+}]+\Ocal(\omega^2)\ \mbox{on} \ \p \Sigma_c.
\eeq
By substituting \eqnref{eq:thpf02} into \eqnref{eq:thpf01}, one thus has \eqnref{eq:le0101}.

The proof is complete.
\end{proof}
Lemma \ref{le:stelec01} shows that the leading-order term in the low-frequency asymptotic expansion of the magnetic field generated by the Earth's core is a gradient filed, namely it is conservative. We would like to point out that the leading-order term of the low-frequency asymptotic expansion of the electric field can also be exactly calculated by following a similar argument. However, since our main concern is to use the monitoring of the magnetic field for detecting the anomalies, we choose not to give the details on that aspect.

\section{Integral representation and asymptotics of $\bH$}

In this section, we consider the case that the Earth's magnetic field is perturbed by the anomalous magnetized objects, that is, $\Gs$, $\varepsilon$ and $\mu$ are replaced by \eqnref{eq:paradef02}, respectively. Henceforth, we denote by $\bE$ and $\bH$, respectively, the associated electric and magnetic fields. In the following, we define the wave numbers $\varsigma_l$, $l=1, 2, \ldots, l_0$, by $\varsigma_l^2:=\omega^2\mu_l\gamma_l$, $\gamma_l:=\Ge_l+i\Gs_l/\omega$, where the sign of $\varsigma_l$ is chosen such that $\Im{\varsigma_l}\geq 0$ (see \cite{CK}). For the sake of simplicity, we denote by $\mathbb{S}^2$ the unit sphere and define $\tilde{D}:=\Sigma_s\setminus\overline{\bigcup_{l'=1}^{l_0}D_{l'}}$. We also let $\nu_l$ be the exterior unit normal vector defined on $\p D_l$, $l=1, 2, \ldots, l_0$. By using the integral ansatz, one can have the following representation formula, whose proof is postponed to be given in Appendix B.
\begin{lem}\label{le:rep01}
Let $(\bE,\bH)$ be the solution to \eqnref{eq:paradef02} and \eqnref{eq:pss01}. Then there hold the following results,
\beq\label{eq:solrepre01}
\bE=\left\{
\begin{split}
&\hat\bE_0+\nabla\times(\mu_0\Acal_\Sigma^{k_0}[\Phi_0]+\nabla\times\Acal_\Sigma^{k_0}[\Psi_0])\\
&+\nabla\times\sum_{l'=1}^{l_0}(\mu_0\Acal_{D_{l'}}^{k_0}[\Phi_{l'}]+\nabla\times\Acal_{D_{l'}}^{k_0}[\Psi_{l'}]) \quad \mbox{in} \ \ \RR^3\setminus\overline\Sigma,\\
&\hat\bE_0+\nabla\times(\mu_0\Acal_\Sigma^{k_s}[\Phi_0]+\nabla\times\Acal_\Sigma^{k_s}[\Psi_0])\\
&+\nabla\times\sum_{l'=1}^{l_0}(\mu_0\Acal_{D_{l'}}^{k_s}[\Phi_{l'}]+\nabla\times\Acal_{D_{l'}}^{k_s}[\Psi_{l'}]) \quad \mbox{in} \ \ \tilde{D},\\
& \nabla\times(\mu_l\Acal_\Sigma^{\varsigma_l}[\Phi_0]+\nabla\times\Acal_\Sigma^{\varsigma_l}[\Psi_0])\\
&+\nabla\times\sum_{l'=1}^{l_0}(\mu_l\Acal_{D_{l'}}^{\varsigma_l}[\Phi_{l'}]+\nabla\times\Acal_{D_{l'}}^{\varsigma_l}[\Psi_{l'}]) \quad \mbox{in} \ \ D_l,
\end{split}
\right.
\eeq
and
\beq\label{eq:solrepre02}
\bH=\left\{
\begin{split}
\hat\bH_0-i\omega^{-1}\nabla\times\Big((\omega^2\varepsilon_0\Acal_{\Sigma}^{k_0}[\Psi_0]+\nabla\times\Acal_{\Sigma}^{k_0}[\Phi_0])&\\
+\sum_{l'=1}^{l_0}(\omega^2\varepsilon_0\Acal_{D_{l'}}^{k_0}[\Psi_{l'}]+\nabla\times\Acal_{D_{l'}}^{k_0}[\Phi_{l'}])\Big) &\quad \mbox{in} \ \ \RR^3\setminus\overline\Sigma, \\
\hat\bH_0-i\omega^{-1}\nabla\times\Big((\omega^2\varepsilon_s\Acal_{\Sigma}^{k_s}[\Psi_0]+\nabla\times\Acal_{\Sigma}^{k_s}[\Phi_0])&\\
+\sum_{l'=1}^{l_0}(\omega^2\varepsilon_s\Acal_{D_{l'}}^{k_s}[\Psi_{l'}]+\nabla\times\Acal_{D_{l'}}^{k_s}[\Phi_{l'}])\Big) &\quad \mbox{in} \ \ \tilde{D}, \\
-i\omega^{-1}\nabla\times\Big((\omega^2\gamma_l\Acal_{\Sigma}^{\varsigma_l}[\Psi_0]+\nabla\times\Acal_{\Sigma}^{\varsigma_l}[\Phi_0])&\\
+\sum_{l'=1}^{l_0}(\omega^2\gamma_{l}\Acal_{D_{l'}}^{\varsigma_l}[\Psi_{l'}]+\nabla\times\Acal_{D_{l'}}^{\varsigma_l}[\Phi_{l'}])\Big) &\quad \mbox{in} \ \ D_l,
\end{split}
\right.
\eeq
where $(\Phi_0, \Psi_0)\in \rm{TH}({\rm div},\p \Sigma)\times \rm{TH}({\rm div},\p \Sigma)$ and $(\Phi_l, \Psi_l)\in \mathrm{TH}({\rm div}, \p D_l)\times\mathrm{TH}({\rm div}, \p D_l)$, $l=1, 2, \ldots, l_0$ satisfy \eqnref{phi_psi}. The fields $(\hat\bE_0, \hat\bH_0)$ satisfy \eqnref{eq:paradef01} and \eqnref{eq:pss01} in $(\RR^3\setminus\overline \Sigma)\bigcup\tilde{D}$ with $\hat\bH_0$ given in \eqref{eq:leH1repre01} in the following, and they depend on the background fields $(\bE_0, \bH_0)$ and the boundary condition on $\p \Sigma_c$.
\end{lem}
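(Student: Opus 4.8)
The plan is to follow the classical layer-potential route for multi-layered time-harmonic Maxwell transmission problems, adapted to the geometry in which the anomalies $D_l$ sit inside the shell $\Sigma_s$ and the Earth's core enters only through a prescribed reference field. First I would isolate the core's contribution: let $(\hat\bE_0,\hat\bH_0)$ be the field produced by the core in the absence of any anomaly, i.e.\ the solution of \eqref{eq:pss02} restricted to $(\RR^3\setminus\overline\Sigma)\cup\tilde{D}$, with $\hat\bH_0$ given by the gradient representation of Lemma~\ref{le:stelec01} (this is \eqref{eq:leH1repre01}); since it solves the homogeneous Maxwell system with parameters $\varepsilon_s,\mu_0$ in a full neighbourhood of each $\p D_l$, it is smooth across every anomaly interface and, together with its tangential traces, continuous across $\p\Sigma$, and the $\p\Sigma_c$-data enters only through $\hat\bE_0$. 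The difference $(\bE-\hat\bE_0,\bH-\hat\bH_0)$ is then a radiating field whose sources are confined to $\p\Sigma$ and $\bigcup_l\p D_l$, and I would represent it by the vectorial single-layer ansatz displayed in \eqref{eq:solrepre01}--\eqref{eq:solrepre02}: over each interface one uses the densities $(\Phi_0,\Psi_0)$ on $\p\Sigma$ and $(\Phi_l,\Psi_l)$ on $\p D_l$, while in each homogeneous region the wavenumber inside $\Gamma_k$ is set to the local value $k_0$, $k_s$ or $\varsigma_l$ (with $\varsigma_l^2=\omega^2\mu_l\gamma_l$). A short computation using $\nabla\times\nabla\times=-\Delta+\nabla\nabla\cdot$ and $(\Delta+k^2)\Gamma_k=0$ confirms that each branch solves the corresponding equations of \eqref{eq:pss01} for the medium \eqref{eq:paradef02}, that $\bH$ is automatically divergence-free and $\varepsilon\bE$ has the prescribed divergence (the charge density lives in the core, hence inside $\hat\bE_0$), and that the Silver--M\"uller condition holds because the exterior branch only involves $k_0$-potentials.

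Next I would impose the transmission conditions $[\nu\times\bE]=[\nu\times\bH]=0$ on $\p\Sigma$ and on each $\p D_l$. Continuity of the tangential trace of $\Acal^k$ itself, together with the jump relation \eqref{jumpM} for $\nu\times\nabla\times\Acal^k$ and the operator $\mathcal{L}^k$ in \eqref{Lk} for $\nu\times\nabla\times\nabla\times\Acal^k$, converts these requirements into a closed system of boundary integral equations for the $2(l_0+1)$ densities; this is exactly the system \eqref{phi_psi}. The right-hand sides are the tangential traces of $\hat\bE_0$ (and of $\nabla\times\hat\bE_0$) on the $\p D_l$, whereas on $\p\Sigma$ the $\hat\bE_0$-contributions cancel by its continuity there. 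In block form the diagonal blocks are of the type $\tfrac{I}{2}\pm\Mcal_{D}^{k}$ and $\mathcal{L}^{k}_{D}$ for $D=\Sigma,D_l$, and the off-diagonal blocks are the smoothing operators $\Mcal^k_{D,D'}$, $\mathcal{L}^k_{D,D'}$ obtained by evaluating potentials on one interface along another.

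The main work --- and the main obstacle --- is the unique solvability of \eqref{phi_psi} for all sufficiently small $\omega$, in the spirit of Lemmas~\ref{le:pri01} and~\ref{le:uniqueinteg01}. I would expand every boundary operator as $\omega\to0$ using \eqref{eq:asymtmp01}--\eqref{eq:asymtmp02} and the Lemma~\ref{le:pri01}-type identity $\mathcal{L}^{k}_{D,D'}[\Psi]=\nu\times\nabla\Scal^0_{D'}[\nabla_{\p D'}\cdot\Psi]+\Ocal(\omega^2)$, extract the leading-order decoupled equations (taking the surface divergence where needed, so that the vectorial equation on each interface collapses onto a scalar equation governed by $\pm\tfrac{I}{2}+(\Kcal^0_{D})^*$, which is invertible on $L^2(\p D)$), and show that the leading-order operator is a compact perturbation of an invertible one, hence Fredholm of index zero. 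Injectivity then follows by a uniqueness argument: a nontrivial solution of the homogeneous system would produce, through the ansatz with $\hat\bE_0\equiv0$, a radiating Maxwell field that is continuous across all interfaces and solves the homogeneous transmission problem, which by the well-posedness results cited for \eqref{eq:pss01} must vanish identically; tracing this back through the jump relations and the invertibility of $-\tfrac{I}{2}+\Mcal^0_{D}$ on $\mathrm{TH}(\mathrm{div},\p D)$ forces all densities to be zero. An analytic Fredholm / perturbation argument then upgrades invertibility at $\omega=0$ to invertibility for small $\omega\ne0$.

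Finally, the unique solution of \eqref{phi_psi} inserted into \eqref{eq:solrepre01}--\eqref{eq:solrepre02} yields a pair $(\bE,\bH)$ that satisfies \eqref{eq:pss01} with the medium \eqref{eq:paradef02} and the radiation condition; by uniqueness of the forward problem it coincides with the actual fields, which proves the lemma. The genuinely delicate point throughout is the bookkeeping of the coupling between the $\p\Sigma$-densities and the $\p D_l$-densities and the simultaneous multi-interface uniqueness argument, rather than any single estimate; the additional fact that $\hat\bH_0$ has the explicit conservative leading order from Lemma~\ref{le:stelec01} is what will later make the representation useful for the asymptotics of $\bH$.
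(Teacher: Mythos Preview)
Your overall architecture---split off a reference field, write the remainder as a Stratton--Chu type ansatz with vectorial single layers on every interface, and turn the transmission conditions into the boundary system \eqref{phi_psi}---is exactly what the paper does in Appendix~B. Where you diverge is in the solvability step, and in the identification of $(\hat\bE_0,\hat\bH_0)$.

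For solvability you propose an abstract Fredholm-plus-injectivity argument: expand in $\omega$, observe compactness, and kill nontrivial kernels by appealing to uniqueness of the homogeneous Maxwell transmission problem. The paper instead performs an explicit cascade: from the first line of \eqref{phi_psi} it shows $\|\Phi_0\|=\Ocal(\omega^2)$, feeds this into the second line to solve for $\Psi_0$ in terms of the anomaly densities, substitutes into the remaining equations, and arrives at block systems on $\prod_l\mathrm{TH}({\rm div},\p D_l)$ governed by the matrix operators $\mathbb{L}_D^{\mu}$, $\mathbb{L}_D^{\gamma}$, $\mathbb{J}_D^{\mu}$. Their invertibility is not argued abstractly but by the small--well-separated anomaly assumption of Section~\ref{sec:02} (Lemma~\ref{le:smallobj01}), which makes the off-diagonal blocks $\Ocal(\delta^2)$. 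Your route is cleaner and does not need smallness of $\delta$, but it yields only existence; the paper's explicit formulas \eqref{eq:app08}, \eqref{eq:app09}, \eqref{eq:app16}--\eqref{eq:app17} for $\Phi_l$, $\nabla_{\p D_l}\!\cdot\Phi_l$ and $\Psi_l^{(0)}$ are precisely what drives the proof of Theorem~\ref{th:0101}, so for the paper's purposes the cascade is not optional.

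One genuine slip: you identify $(\hat\bE_0,\hat\bH_0)$ with the unperturbed field $(\bE_0,\bH_0)$ of \eqref{eq:pss02} and cite ``Lemma~\ref{le:stelec01} (this is \eqref{eq:leH1repre01})''. These are different objects. Lemma~\ref{le:stelec01} gives the gradient form \eqref{eq:le0101} of $\bH_0$ itself, whereas \eqref{eq:leH1repre01} is the relation $\bH_0=\hat\bH_0+(\varepsilon_0-\varepsilon_s)\varepsilon_s^{-1}\sum_{l'}\nabla\Scal_{D_{l'}}^0[\nu_{l'}\cdot\hat\bH_0]+\Ocal(\omega)$, which shows $\hat\bH_0\neq\bH_0$ unless $\varepsilon_s=\varepsilon_0$. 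In the paper $(\hat\bE_0,\hat\bH_0)$ is determined a posteriori, by requiring that the representation reduce to $(\bE_0,\bH_0)$ when the anomaly contrasts vanish (this is the computation \eqref{eq:reextr01}--\eqref{eq:th01010102} leading to Lemma~\ref{le:eleresult010102}); it carries the $\p\Sigma_c$ boundary data and genuinely depends on the anomaly locations. If you simply set $\hat\bH_0=\bH_0$ your ansatz will still produce \emph{a} solvable system, but it will not be the system \eqref{phi_psi} as written and the subsequent asymptotics (which use $\nu_l\cdot\hat\bH_0$, not $\nu_l\cdot\bH_0$, as input) will not match.
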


Based on Lemma~\ref{le:rep01}, we next derive two critical asymptotic expansions of the electromagnetic fields $\bE$ and $\bH$. The first one is the low-frequency asymptotics of the aforementioned fields, and the leading-order terms are referred to as the steady fields. The second one is the asymptotic expansion of the steady fields in terms of the size of the anomalies.

\subsection{First level approximation}

In this section, we derive the steady parts of the perturbed magnetic field in \eqref{eq:solrepre02}. We have the following asymptotic expansion results
\begin{thm}\label{th:0101}
Let $(\bE,\bH)$ be the solution to \eqnref{eq:paradef02} and \eqnref{eq:pss01}.
Then for $\omega\in\mathbb{R}_+$ sufficiently small, there hold the following asymptotic expansions,
\beq\label{eq:th0101}
\bH=\left\{
\begin{split}
\hat\bH_0&-\varepsilon_0\nabla\times\Acal_\Sigma^0[\Xi]+\nabla\Scal_{\Sigma}^0[\Theta]\\
&+\sum_{l'=1}^{l_0}\Big(\varepsilon_0\nabla\times\Acal_{D_{l'}}^{0}[\Psi_{l'}^{(0)}]-\mu_0\nabla\Scal_{D_{l'}}^{0}[\Pi_{l'}]\Big)+\Ocal(\omega) \quad \mbox{in} \ \ \RR^3\setminus\overline{\Sigma}, \\
\hat\bH_0&-\varepsilon_s\nabla\times\Acal_\Sigma^0[\Xi]+\nabla\Scal_{\Sigma}^0[\Theta]\\
&+\sum_{l'=1}^{l_0}\Big(\varepsilon_s\nabla\times\Acal_{D_{l'}}^{0}[\Psi_{l'}^{(0)}]-\mu_0\nabla\Scal_{D_{l'}}^{0}[\Pi_{l'}]\Big)+\Ocal(\omega) \quad \mbox{in} \ \ \tilde{D}, \\
&-\gamma_l\nabla\times\Acal_\Sigma^0[\Xi]-\nabla\Scal_{\Sigma}^0[\Theta]+\sum_{l'=1}^{l_0}\Big(\gamma_l\nabla\times\Acal_{D_{l'}}^{0}[\Psi_{l'}^{(0)}]-\mu_0\nabla\Scal_{D_{l'}}^{0}[\Pi_{l'}]\Big)+\Ocal(\omega) \ \mbox{in} \ D_l,
\end{split}
\right.
\eeq
where $\Xi, \Theta\in {\rm TH}({\rm div}, \p \Sigma)$ satisfy
\beq\label{eq:th010101}
\begin{split}
\Xi&=\sum_{l'=1}^{l_0}\Big(\Gl_{\varepsilon} I +\Mcal_{\Sigma}^0\Big)^{-1}\Mcal_{\Sigma, D_{l'}}^0[\Psi_{l'}^{(0)}], \\ \Theta&=(\varepsilon_s-\varepsilon_0)\sum_{l'=1}^{l_0}\nu\cdot\Big(\nabla\times\Acal_{D_{l'}}^0[\Psi_{l'}^{(0)}]\Big)
-(\varepsilon_s-\varepsilon_0)\nu\cdot\Big(\nabla\times\Acal_{\Sigma}^0[\Xi]\Big),
\end{split}
\eeq
and $\Psi_l^{(0)}\in {\rm TH}({\rm div}, \p D_l)$, $l=1, 2,\ldots, l_0$ are defined in \eqnref{eq:app17}.
$\Pi_l\in L^2(\p D_l)$, $l=1, 2,\ldots, l_0$ are defined by
\beq\label{eq:th0103}
\begin{split}
\Pi_l=&\Big((\mathbb{J}_D^{\mu})^{-1}\big[(\frac{\nu_1\cdot\hat\bH_0}{\mu_1-\mu_0}, \frac{\nu_2\cdot\hat\bH_0}{\mu_2-\mu_0}, \ldots, \frac{\nu_{l_0}\cdot\hat\bH_0}{\mu_{l_0}-\mu_0})^T\big]\Big)_l \\
&-\Big((\mathbb{J}_D^{\mu})^{-1}\big[(\frac{\omega\gamma_1\mu_1\nu_1\cdot\mathbf{C}}{\mu_1-\mu_0}, \frac{\omega\gamma_2\mu_2\mathbf{C}}{\mu_2-\mu_0}, \ldots, \frac{\omega\gamma_{l_1}\mu_{l_0}\nu_{l_0}\cdot\mathbf{C}}{\mu_{l_0}-\mu_0})^T\big]\Big)_l
\end{split}
\eeq
where $\mathbb{J}_D^{\mu}$ is defined in \eqnref{eq:thpf010303} and $\mathbf{C}$ is defined by
\beq\label{eq:th01add01}
\mathbf{C}:=\nabla\times\Acal_\Sigma^0\Big(\Gl_{\varepsilon} I +\Mcal_{\Sigma}^0\Big)^{-1}\sum_{l'=1}^{l_0}\Mcal_{\Sigma, D_{l'}}^0[\Psi_{l'}^{(0)}]
-\nabla\times\sum_{l'=1}^{l_0}\Acal_{D_{l'}}^0[\Psi_{l'}^{(0)}].
\eeq
The parameters $\Gl_{\mu_l}$ and $\Gl_{\gamma_l}$ are defined by
\beq\label{eq:defmulam}
\lambda_{\mu_l}:=\frac{\mu_l+\mu_0}{2(\mu_l-\mu_0)}, \quad \lambda_{\gamma_l}:=\frac{\gamma_l+\varepsilon_s}{2(\gamma_l-\varepsilon_s)}, \quad l=1, 2, \ldots, l_0.
\eeq
\end{thm}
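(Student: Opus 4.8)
The plan is to perform a matched low-frequency expansion of the boundary integral system \eqnref{phi_psi} satisfied by $(\Phi_0,\Psi_0,\{\Phi_l,\Psi_l\}_{l=1}^{l_0})$ in Lemma~\ref{le:rep01}, to extract the leading-order densities, and to substitute them back into the representation formulas \eqnref{eq:solrepre01}--\eqnref{eq:solrepre02}. The basic algebraic tool is the identity $\nabla\times\nabla\times=-\Delta+\nabla\nabla\cdot$ together with the fact that $\Gamma_0=-1/(4\pi\|\Bx\|)$ is, up to sign, the fundamental solution of $\Delta$, so that $\Delta\Acal_B^0[\Phi]=0$ off $\p B$ and, for a tangential density $\Phi$, \[\nabla\times\nabla\times\Acal_B^0[\Phi]=\nabla\nabla\cdot\Acal_B^0[\Phi]=\nabla\Scal_B^0[\nabla_{\p B}\cdot\Phi]\quad\text{in }\RR^3\setminus\p B.\] This is exactly the mechanism that turns the double-curl terms in \eqnref{eq:solrepre02} into the conservative terms $\nabla\Scal_{\Sigma}^0[\Theta]$ and $\mu_0\nabla\Scal_{D_{l'}}^0[\Pi_{l'}]$ in \eqnref{eq:th0101}, once the correct scalings of the densities are pinned down.

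First I would determine the scalings. Because of the $-\mathrm{i}\omega^{-1}$ prefactor in \eqnref{eq:solrepre02}, boundedness of $\bH$ as $\omega\to0$ forces $\nabla_{\p\Sigma}\cdot\Phi_0=\Ocal(\omega)$ and $\nabla_{\p D_{l'}}\cdot\Phi_{l'}=\Ocal(\omega)$, whereas the terms $\omega^{2}\varepsilon_0\Acal_\Sigma^{k_0}[\Psi_0]$, $\omega^{2}\varepsilon_0\Acal_{D_{l'}}^{k_0}[\Psi_{l'}]$ and their $\tilde D$- and $D_l$-analogues survive only because $\Psi_0$ and the $\Psi_{l'}$ are of order $\omega^{-1}$. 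I would then introduce the rescaled leading densities $\Xi$ and $\Psi_{l'}^{(0)}$ (coming from $\Psi_0$ and $\Psi_{l'}$), $\Theta$ (from $-\mathrm{i}\omega^{-1}\nabla_{\p\Sigma}\cdot\Phi_0$), and $\Pi_{l'}$ (from $-\mathrm{i}\omega^{-1}\nabla_{\p D_{l'}}\cdot\Phi_{l'}$, up to the constant $-\mu_0$), and expand the boundary system using \eqnref{eq:asymtmp01}--\eqnref{eq:asymtmp02}, Lemma~\ref{le:pri01}, and the analogous expansions on $\p D_l$ (here $\varsigma_l=\Ocal(\omega^{1/2})$ because of the conductive part of $\gamma_l$, but the odd powers of $\varsigma_l$ drop after applying $\nabla\times$, so the anomaly expansions remain governed by integer powers of $\omega$), keeping track in particular of the $\omega^{-2}$-singular part of $\mathcal{L}_\Sigma^{k_0}-\mathcal{L}_\Sigma^{k_s}$.

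Next I would decouple the $\p\Sigma$-block. Feeding the scalings into the two transmission equations on $\p\Sigma$ in \eqnref{phi_psi} and, where needed, taking the surface divergence and using $\nabla_{\p\Sigma}\cdot\Mcal_\Sigma^{0}=-(\Kcal_\Sigma^{0})^*\nabla_{\p\Sigma}\cdot$, the leading orders collapse to $(\Gl_\varepsilon I+\Mcal_\Sigma^0)[\Xi]=\sum_{l'}\Mcal_{\Sigma,D_{l'}}^0[\Psi_{l'}^{(0)}]$ and to a $(\Gl_\varepsilon I-(\Kcal_\Sigma^0)^*)$-type scalar equation for $\Theta$; the invertibility of $\Gl_\varepsilon I+\Mcal_\Sigma^0$ on $\mathrm{TH}(\mathrm{div},\p\Sigma)$ and of $\tfrac{\varepsilon_s+\varepsilon_0}{2}I+(\varepsilon_0-\varepsilon_s)(\Kcal_\Sigma^0)^*$ on $L^2(\p\Sigma)$ — both already used in Lemma~\ref{le:uniqueinteg01} — then yields the closed-form expressions \eqnref{eq:th010101}. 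For the anomaly block I would exploit $\mu_l\neq\mu_0$, which keeps $\lambda_{\mu_l}$ finite and $\tfrac12 I\pm\Mcal_{D_l}^{k}$ invertible on $\mathrm{TH}(\mathrm{div},\p D_l)$ for small $\omega$: the $\mu$-transmission conditions across $\p D_l$ reduce the leading part of $\Phi_l$ to a gradient field governed by $\Pi_l\in L^2(\p D_l)$, and assembling the $l_0$ transmission relations gives a coupled linear system $\mathbb{J}_D^{\mu}(\Pi_1,\dots,\Pi_{l_0})^T=\mathrm{RHS}$, with $\lambda_{\mu_l}$-shifted Neumann--Poincar\'e operators on the diagonal, smoothing cross operators off the diagonal, and right-hand side assembled from the normal traces $\nu_l\cdot\hat\bH_0/(\mu_l-\mu_0)$ and from the field $\mathbf{C}$ of \eqnref{eq:th01add01} generated on $\bigcup_{l'}D_{l'}$ by $\Xi$ and the $\Psi_{l'}^{(0)}$'s; inverting $\mathbb{J}_D^{\mu}$ produces \eqnref{eq:th0103}, while the $\mathrm{TH}(\mathrm{div})$-component of the $\p D_l$-equations, after the same reductions, is precisely the system \eqnref{eq:app17} defining $\Psi_l^{(0)}$. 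Substituting $\Xi,\Theta,\Psi_l^{(0)},\Pi_l$ into \eqnref{eq:solrepre02}, applying the double-curl-to-gradient identity, and verifying that all remainders are uniformly $\Ocal(\omega)$ then gives \eqnref{eq:th0101} in each of the three regions $\RR^3\setminus\overline\Sigma$, $\tilde D$ and $D_l$.

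The main obstacle is the order-bookkeeping across the fully coupled system \eqnref{phi_psi}: the unknowns genuinely live at different scales ($\Psi_0,\Psi_l\sim\omega^{-1}$, the $\Phi$-densities with surface divergence $\sim\omega$, and their divergence-free parts $\sim1$), and several leading-order contributions to \eqnref{eq:th0101} emerge only from the interplay of the $\omega^{-1}$ prefactor, the $\omega^{-1}$-scale $\Psi$'s, and the $\omega^{-2}$-singular part of $\mathcal{L}_\Sigma^{k_0}-\mathcal{L}_\Sigma^{k_s}$, so each equation of \eqnref{phi_psi} must be expanded to the correct — and equation-dependent — order before any cancellation becomes visible. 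A secondary difficulty is the uniform invertibility of $\mathbb{J}_D^{\mu}$, which must be established separately (a Fredholm argument reducing, as in Lemma~\ref{le:uniqueinteg01}, to a uniqueness statement for a conservative field with prescribed normal jumps across the disjoint surfaces $\p D_l$) so that the formula \eqnref{eq:th0103} for $\Pi_l$ is well defined.
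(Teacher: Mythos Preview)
Your proposal is correct and follows essentially the same route as the paper: expand the boundary system \eqnref{phi_psi} asymptotically in $\omega$ (the paper carries this out in Appendix~B, equations \eqnref{eq:app03}--\eqnref{eq:app17}, and the proof of the theorem itself just quotes those results), extract the rescaled leading densities $\Xi,\Theta,\Psi_l^{(0)},\Pi_l$ with exactly the scalings you describe, and substitute back into \eqnref{eq:solrepre02} using $\nabla\times\nabla\times\Acal_B^0=\nabla\Scal_B^0[\nabla_{\p B}\cdot\,]$ off $\p B$. One small correction of terminology: $\mathcal{L}_\Sigma^{k_0}-\mathcal{L}_\Sigma^{k_s}$ is $\Ocal(\omega^2)$, not ``$\omega^{-2}$-singular'' --- the mechanism you want is that this $\Ocal(\omega^2)$ operator applied to $\Psi_0\sim\omega^{-1}$ yields an $\Ocal(\omega)$ term, which is what forces $\Phi_0=\Ocal(\omega)$ in \eqnref{eq:app04}.
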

\begin{proof}
By using direct asymptotic expansion with respect to $\omega$ in \eqnref{eq:solrepre02} and combing \eqnref{eq:app04}, \eqnref{eq:app05},
\eqnref{eq:app08} and \eqnref{eq:app16} one obtains that
\beq\label{eq:th0101001}
\bH=\left\{
\begin{split}
\hat\bH_0&-i(\varepsilon_0\nabla\times\Acal_\Sigma^0[\omega\Psi_0]+D^2\Acal_{\Sigma}^0[\omega^{-1}\Phi_0])\\
&-i\sum_{l'=1}^{l_0}\Big(\varepsilon_0\nabla\times\Acal_{D_{l'}}^{0}[\omega\Psi_{l'}]
+D^2\Acal_{D_{l'}}^{0}[\omega^{-1}\Phi_{l'}]\Big)+\Ocal(\omega)\ \mbox{in} \ \RR^3\setminus\overline{\Sigma}, \\
\hat\bH_0&-i(\varepsilon_s\nabla\times\Acal_\Sigma^0[\omega\Psi_0]+D^2\Acal_{\Sigma}^0[\omega^{-1}\Phi_0])\\
&-i\sum_{l'=1}^{l_0}\Big(\varepsilon_s\nabla\times\Acal_{D_{l'}}^{0}[\omega\Psi_{l'}]
+D^2\Acal_{D_{l'}}^{0}[\omega^{-1}\Phi_{l'}]\Big)+\Ocal(\omega)\ \mbox{in} \ \tilde{D}, &\\
&-i(\gamma_l\nabla\times\Acal_\Sigma^0[\omega\Psi_0]+D^2\Acal_{\Sigma}^0[\omega^{-1}\Phi_0])\\
&-i\sum_{l'=1}^{l_0}\Big(\gamma_l\nabla\times\Acal_{D_{l'}}^{0}[\omega\Psi_{l'}]
+D^2\Acal_{D_{l'}}^{0}[\omega^{-1}\Phi_{l'}]\Big)+\Ocal(\omega)\ \mbox{in} \ D_l.&
\end{split}
\right.
\eeq
Combining the first equation in \eqnref{phi_psi}, \eqnref{eq:app04}, \eqnref{eq:app05}, \eqnref{eq:app08} and using integration by parts one has
\beq\label{eq:th0101002}
\omega\Psi_0=-\Big(\Gl_{\varepsilon} I +\Mcal_{\Sigma}^0\Big)^{-1}\sum_{l'=1}^{l_0}\Mcal_{\Sigma, D_{l'}}^0[\omega\Psi_{l'}]
+\Ocal(\omega),
\eeq
and
\beq\label{eq:th0101002}
D^2\Acal_{\Sigma}^0[\omega^{-1}\Phi_0]=(\varepsilon_s-\varepsilon_0)\nabla\Scal_{\Sigma}^0
\left[\nu\cdot\Big(\nabla\times\sum_{l'=1}^{l_0}\Acal_{D_{l'}}^0[\omega\Psi_{l'}]\Big)+\nu\cdot\Big(\nabla\times\Acal_\Sigma^0[\omega\Psi_0]\Big)\right]
+\Ocal(\omega).
\eeq
Substituting \eqnref{eq:th0101002} into \eqnref{eq:th0101001} and using \eqnref{eq:app09} and \eqnref{eq:app16} one can show \eqnref{eq:th0101}.

The proof is complete.
\end{proof}

Now we present the explicit forms of the fields $\hat\bH_0$ and $\hat\bE_0$ in $(\RR^3\setminus\overline{\Sigma})\bigcup \tilde{D}$ that have been used in our earlier discussion. We remark that if there are no magnetized objects presented, namely $\mu_l=\mu_0$ and $\gamma_l=\varepsilon_s$, $l=1, 2, \ldots, l_0$, then \eqnref{eq:th0101} is degenerated to $\bH=\bH_0+\Ocal(\omega)$ in $\RR^3\setminus\overline\Sigma_c$. In this case, \eqnref{eq:app17} yields
\beq\label{eq:reextr01}
\Psi_l^{(0)}=\varepsilon_s^{-1}\nu_l\times\hat\bH_0\ \ \mbox{on} \ \ \p D_l, \quad l=1, 2, \ldots, l_0,
\eeq
and \eqnref{eq:th0103} yields
\beq\label{eq:reextr02}
\Pi_l=\mu_0^{-1}\nu_l\cdot\hat\bH_0+\Ocal(\omega)\ \ \mbox{on} \ \ \p D_l, \quad l=1, 2, \ldots, l_0.
\eeq
Substituting \eqnref{eq:reextr01} and \eqnref{eq:reextr02} into \eqnref{eq:th0101}, and using the assumption that $\bH=\bH_0+\Ocal(\omega)$, one obtains
\beq\label{eq:th01010101}
\begin{split}
\bH_0=&\hat\bH_0-\varepsilon_s^{-1}\sum_{l'=1}^{l_0}\Big(\varepsilon_0\nabla\times\Acal_\Sigma^0\Big(\Gl_{\varepsilon} I +\Mcal_{\Sigma}^0\Big)^{-1}\Mcal_{\Sigma, D_{l'}}^0[\nu_{l'}\times\hat\bH_0]\\
&-(\varepsilon_s-\varepsilon_0)\nabla\Scal_{\Sigma}^0\nu\cdot(\nabla\times\Acal_{D_{l'}}^0)[\nu_{l'}\times\hat\bH_0]\\
&+(\varepsilon_s-\varepsilon_0)\nabla\Scal_{\Sigma}^0\nu\cdot\nabla\times\Acal_\Sigma^0\Big(\Gl_{\varepsilon} I +\Mcal_{\Sigma}^0\Big)^{-1}\Mcal_{\Sigma, D_{l'}}^0[\nu_{l'}\times\hat\bH_0]\Big)\\
&+\sum_{l'=1}^{l_0}\Big(\varepsilon_0\varepsilon_s^{-1}\nabla\times\Acal_{D_{l'}}^{0}[\nu_{l'}\times\hat\bH_0]-\nabla\Scal_{D_{l'}}^{0}[\nu_{l'}\cdot\hat\bH_0]
\Big)+\Ocal(\omega)\ \mbox{in} \ \RR^3\setminus\overline{\Sigma}, \\
\end{split}
\eeq
and
\beq\label{eq:th01010102}
\begin{split}
\bH_0=&
\hat\bH_0-\varepsilon_s^{-1}\sum_{l'=1}^{l_0}\Big(\varepsilon_s\nabla\times\Acal_\Sigma^0\Big(\Gl_{\varepsilon} I +\Mcal_{\Sigma}^0\Big)^{-1}\Mcal_{\Sigma, D_{l'}}^0[\nu_{l'}\times\hat\bH_0]\\
&-(\varepsilon_s-\varepsilon_0)\nabla\Scal_{\Sigma}^0\nu\cdot(\nabla\times\Acal_{D_{l'}}^0)[\nu_{l'}\times\hat\bH_0]\\
&+(\varepsilon_s-\varepsilon_0)\nabla\Scal_{\Sigma}^0\nu\cdot\nabla\times\Acal_\Sigma^0\Big(\Gl_{\varepsilon} I +\Mcal_{\Sigma}^0\Big)^{-1}\Mcal_{\Sigma, D_{l'}}^0[\nu_{l'}\times\hat\bH_0]\Big)\\
&+\sum_{l'=1}^{l_0}\Big(\nabla\times\Acal_{D_{l'}}^{0}[\nu_{l'}\times\hat\bH_0]-\nabla\Scal_{D_{l'}}^{0}[\nu_{l'}\cdot\hat\bH_0]
\Big)+\Ocal(\omega)\ \mbox{in} \ \tilde{D}.
\end{split}
\eeq
We can further simplify \eqnref{eq:th01010101} and \eqnref{eq:th01010102} into some more compact form. To that end, we first derive the following lemma
\begin{lem}\label{le:eleresult010101}
There hold the following relations
\beq\label{eq:re010101}
\begin{split}
\nabla\times\Acal_{D_{l'}}^0[\nu_{l'}\times\hat\bH_0]=
\nabla\Scal_{D_l}^{0}[\nu_l\cdot\hat\bH_0]+\Ocal(\omega)\ \ & \qquad \mbox{in} \ \ (\RR^3\setminus\overline\Sigma)\bigcup \tilde{D},
\end{split}
\eeq
and
\beq\label{eq:re010102}
\begin{split}
&\nabla\times\Acal_\Sigma^0\Big(\Gl_{\varepsilon} I +\Mcal_{\Sigma}^0\Big)^{-1}\Mcal_{\Sigma, D_{l'}}^0[\nu_{l'}\times\hat\bH_0]\\
=&\left\{
\begin{array}{cc}
\nabla\Scal_\Sigma^0\Big(\Gl_{\varepsilon} I +(\Kcal_{\Sigma}^0)^*\Big)^{-1}\nu\cdot\nabla\Scal_{D_{l'}}^0[\nu_{l'}\cdot\hat\bH_0]+\Ocal(\omega) \ \mbox{in} \ \ \RR^3\setminus\overline{\Sigma},\\
\frac{\varepsilon_s-\varepsilon_0}{\varepsilon_s}\nabla\Scal_{D_{l'}}^0[\nu_{l'}\cdot\hat\bH_0]+\frac{\varepsilon_0}{\varepsilon_s}
\nabla\Scal_\Sigma^0\Big(\Gl_{\varepsilon} I +(\Kcal_{\Sigma}^0)^*\Big)^{-1}\nu\cdot\nabla\Scal_{D_{l'}}^0[\nu_{l'}\cdot\hat\bH_0]+\Ocal(\omega)\ \mbox{in} \ \ \tilde{D}.
\end{array}
\right.
\end{split}
\eeq
\end{lem}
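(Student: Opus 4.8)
The plan is to prove the two identities in Lemma~\ref{le:eleresult010101} by exploiting the fact that, modulo $\Ocal(\omega)$, all the relevant layer potentials are built from the \emph{static} kernel $\Gamma_0$, and that the surface vector field $\nu_{l'}\times\hat\bH_0$ on $\p D_{l'}$ is, up to the same order, the tangential trace of a \emph{curl-free} field; more precisely, by Lemma~\ref{le:stelec01} the leading-order part of $\hat\bH_0$ is a gradient field, hence locally of the form $\nabla u$ with $u$ harmonic near $\p D_{l'}$. The key observation for \eqref{eq:re010101} is the classical vector-potential identity: for a scalar harmonic $u$ in a neighbourhood of $\p D_{l'}$ one has $\nabla\times\Acal_{D_{l'}}^0[\nu_{l'}\times\nabla u]=\nabla\Scal_{D_{l'}}^0[\nu_{l'}\cdot\nabla u]$ outside $\overline{D_{l'}}$. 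I would establish this by writing $\nu_{l'}\times\nabla u=\nu_{l'}\times\nabla_{\p D_{l'}}u$ (the tangential gradient), using the surface Stokes/integration-by-parts formula to move the surface curl onto $\Gamma_0$, and then recognising the resulting double integral as $\nabla\nabla\cdot$ applied to $\Acal_{D_{l'}}^0$ of a gradient, which collapses to $\nabla\Scal_{D_{l'}}^0[\nu_{l'}\cdot\nabla u]$ after invoking $\Delta\Gamma_0=0$ away from the diagonal; throughout, terms generated by replacing $\hat\bH_0$'s leading gradient part by the full $\hat\bH_0$, or by replacing $\Gamma_{k_s}$-type kernels by $\Gamma_0$, are absorbed into $\Ocal(\omega)$ exactly as in the proof of Lemma~\ref{le:pri01}.

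For the second identity \eqref{eq:re010102}, I would proceed in two stages. First, feed the identity just proved — that $\Mcal_{\Sigma,D_{l'}}^0[\nu_{l'}\times\hat\bH_0]=\nu\times\nabla\times\Acal_{D_{l'}}^0[\nu_{l'}\times\hat\bH_0]\big|_{\p\Sigma}$ equals, modulo $\Ocal(\omega)$, $\nu\times\nabla\Scal_{D_{l'}}^0[\nu_{l'}\cdot\hat\bH_0]\big|_{\p\Sigma}$ — into the operator $\big(\Gl_\varepsilon I+\Mcal_\Sigma^0\big)^{-1}$. Then I would use the algebraic fact, verified by taking surface divergence and using $\nabla_{\p\Sigma}\cdot\Mcal_\Sigma^0=-(\Kcal_\Sigma^0)^*\nabla_{\p\Sigma}\cdot$ together with the jump relation $\nabla\times\Acal_\Sigma^0[\nu\times\nabla w]$ is itself a gradient whose normal trace on $\p\Sigma$ satisfies the Neumann--Poincar\'e identity, that
$$\nabla\times\Acal_\Sigma^0\big(\Gl_\varepsilon I+\Mcal_\Sigma^0\big)^{-1}[\nu\times\nabla w]=\nabla\Scal_\Sigma^0\big(\Gl_\varepsilon I+(\Kcal_\Sigma^0)^*\big)^{-1}[\nu\cdot\nabla w]$$
outside $\overline\Sigma$ (and equals $w+\frac{\varepsilon_0}{\varepsilon_s}$ times that expression inside, by the interior/exterior jump of the single-layer normal derivative, which accounts for the extra $\frac{\varepsilon_s-\varepsilon_0}{\varepsilon_s}$ term in $\tilde D$). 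Applying this with $w=\Scal_{D_{l'}}^0[\nu_{l'}\cdot\hat\bH_0]$ gives precisely the right-hand sides of \eqref{eq:re010102}. The interior case requires care with the constant: one uses that $\nabla\times\Acal_\Sigma^0[\Xi]$ is harmonic and its tangential trace is continuous while its normal trace jumps, so inside $\Sigma$ one picks up the full density $[\nu_{l'}\cdot\hat\bH_0]$-generated field plus the $\varepsilon_0/\varepsilon_s$-weighted exterior-type remainder, matching the stated transmission-type decomposition.

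The main obstacle I anticipate is bookkeeping the error terms cleanly: one must check that every step where a wave kernel $\Gamma_{k}$ ($k=k_0,k_s,\varsigma_l$) is replaced by $\Gamma_0$, or where the non-gradient $\Ocal(\omega)$ part of $\hat\bH_0$ is dropped, genuinely contributes $\Ocal(\omega)$ in the relevant $\mathrm{TH}(\mathrm{div})$ or $L^2$ norm — and that these errors survive composition with the (bounded) inverse operators $(\Gl_\varepsilon I+\Mcal_\Sigma^0)^{-1}$ and $(\Gl_\varepsilon I+(\Kcal_\Sigma^0)^*)^{-1}$ without blowing up, which follows from the uniform invertibility established in Lemma~\ref{le:uniqueinteg01} and the standard small-$\omega$ boundedness of the layer operators cited after \eqref{eq:defbdop01}. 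A secondary subtlety is justifying the surface integration-by-parts that turns $\nu_{l'}\times\nabla u$-type densities into $\nabla_{\p D_{l'}}\cdot$-expressions on the merely Lipschitz boundary $\p D_{l'}$; this is handled by the density of smooth fields in $\mathrm{TH}(\mathrm{div},\p D_{l'})$ and the mapping properties already recorded in Section~2. Once these estimates are in place, the two identities are immediate consequences of the static potential-theoretic relations above, so the proof is short modulo this error analysis.
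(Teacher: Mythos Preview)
Your proposal is correct and follows essentially the same approach as the paper: the paper's proof simply observes that the leading-order part of $\hat\bH_0$ is the gradient of a harmonic function (as in \eqref{eq:le0101}) and then invokes the argument of Lemma~\ref{le:eleresult01}, which in turn defers to Lemma~5.5 of \cite{ADM14}---exactly the static vector-potential identity $\nabla\times\Acal_{D}^0[\nu\times\nabla u]=\nabla\Scal_{D}^0[\nu\cdot\nabla u]$ for harmonic $u$ that you spell out. Your more detailed account of the second identity via the relation $\nabla_{\p\Sigma}\cdot\Mcal_\Sigma^0=-(\Kcal_\Sigma^0)^*\nabla_{\p\Sigma}\cdot$ and the interior/exterior jump bookkeeping is precisely what lies behind the paper's one-line reference, so there is no substantive difference in strategy.
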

\begin{proof}
Note that the lower order term of $\hat\bH_0$ is the gradient of a harmonic function (similar to \eqnref{eq:le0101}). The proof for \eqnref{eq:re010101} follows from a similar argument to that for the proof of Lemma \ref{le:eleresult01}. Using \eqnref{eq:re010101} and also a similar argument to that in the proof of Lemma \ref{le:eleresult01} one can then prove \eqnref{eq:re010102}.
\end{proof}

By Lemma \ref{le:eleresult010101}, \eqnref{eq:th01010102} and \eqref{eq:th01010101}, one can readily show that
\begin{lem}\label{le:eleresult010102}
$\hat\bH_0$ introduced in \eqnref{le:eleresult010101} satisfies
\beq\label{eq:leH1repre01}
\begin{split}
\bH_0&=\hat\bH_0+(\varepsilon_0-\varepsilon_s)\varepsilon_s^{-1}\sum_{l'=1}^{l_0}\nabla\Scal_{D_{l'}}^0[\nu_{l'}\cdot\hat\bH_0]+\Ocal(\omega) \ \ \mbox{in} \ \ (\RR^3\setminus\overline\Sigma)\bigcup \tilde{D}.
\end{split}
\eeq
\end{lem}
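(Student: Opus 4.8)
The plan is to combine the two expansions \eqnref{eq:th01010101} and \eqnref{eq:th01010102} with the two identities in Lemma~\ref{le:eleresult010101}, substituting everything in and watching the algebraic cancellations. First I would focus on the region $\RR^3\setminus\overline{\Sigma}$. In \eqnref{eq:th01010101} the terms naturally group into three families: the single-anomaly terms $\varepsilon_0\varepsilon_s^{-1}\nabla\times\Acal_{D_{l'}}^0[\nu_{l'}\times\hat\bH_0]-\nabla\Scal_{D_{l'}}^0[\nu_{l'}\cdot\hat\bH_0]$, the terms carrying the factor $\varepsilon_0\nabla\times\Acal_\Sigma^0(\Gl_\varepsilon I+\Mcal_\Sigma^0)^{-1}\Mcal_{\Sigma,D_{l'}}^0[\nu_{l'}\times\hat\bH_0]$, and the $\nabla\Scal_\Sigma^0$-terms coming from $\Theta$. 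Using \eqnref{eq:re010101} the first family collapses to $(\varepsilon_0\varepsilon_s^{-1}-1)\nabla\Scal_{D_{l'}}^0[\nu_{l'}\cdot\hat\bH_0]=(\varepsilon_0-\varepsilon_s)\varepsilon_s^{-1}\nabla\Scal_{D_{l'}}^0[\nu_{l'}\cdot\hat\bH_0]$, which is exactly the claimed leading correction. So the remaining work is to show the other two families cancel.

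For that cancellation I would invoke the first line of \eqnref{eq:re010102}, which rewrites $\nabla\times\Acal_\Sigma^0(\Gl_\varepsilon I+\Mcal_\Sigma^0)^{-1}\Mcal_{\Sigma,D_{l'}}^0[\nu_{l'}\times\hat\bH_0]$ in $\RR^3\setminus\overline{\Sigma}$ as $\nabla\Scal_\Sigma^0(\Gl_\varepsilon I+(\Kcal_\Sigma^0)^*)^{-1}\nu\cdot\nabla\Scal_{D_{l'}}^0[\nu_{l'}\cdot\hat\bH_0]$ up to $\Ocal(\omega)$. Substituting this into both the $\varepsilon_0$-prefactored term and — after applying $\nu\cdot$ and the surface-divergence identity $\nabla_{\p\Sigma}\cdot\nabla\times\Acal_\Sigma^0=0$ type manipulation behind $\Theta$ — into the $\Theta$-term, I expect the coefficients $-\varepsilon_0\varepsilon_s^{-1}$, $+(\varepsilon_s-\varepsilon_0)\varepsilon_s^{-1}$ multiplying the $\nu\cdot\nabla\Scal_{D_{l'}}^0$ contribution and $-(\varepsilon_s-\varepsilon_0)\varepsilon_s^{-1}$ multiplying the $\nu\cdot\nabla\times\Acal_\Sigma^0(\cdots)$ contribution (which, by \eqnref{eq:re010102} again, reduces to the same $\nabla\Scal_\Sigma^0(\Gl_\varepsilon I+(\Kcal_\Sigma^0)^*)^{-1}\nu\cdot\nabla\Scal_{D_{l'}}^0$ form) to sum to zero. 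Concretely, the three coefficients should combine as $-\varepsilon_0\varepsilon_s^{-1}+(\varepsilon_s-\varepsilon_0)\varepsilon_s^{-1}\cdot\big(\tfrac{\varepsilon_s}{\varepsilon_s-\varepsilon_0}\text{-type normalization}\big)=0$ once one tracks the $\nu\cdot\nabla\Scal_\Sigma^0(\cdots)$ factor correctly through the operator $(\Gl_\varepsilon I+(\Kcal_\Sigma^0)^*)^{-1}$; this is the bookkeeping core of the argument.

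The region $\tilde D$ is handled the same way starting from \eqnref{eq:th01010102}, except now the single-anomaly term in \eqnref{eq:th01010102} already has coefficient $1$ rather than $\varepsilon_0\varepsilon_s^{-1}$ on $\nabla\times\Acal_{D_{l'}}^0[\nu_{l'}\times\hat\bH_0]$, so applying \eqnref{eq:re010101} gives $\nabla\Scal_{D_{l'}}^0[\nu_{l'}\cdot\hat\bH_0]-\nabla\Scal_{D_{l'}}^0[\nu_{l'}\cdot\hat\bH_0]=0$ from that pair; the surviving $(\varepsilon_0-\varepsilon_s)\varepsilon_s^{-1}\nabla\Scal_{D_{l'}}^0[\nu_{l'}\cdot\hat\bH_0]$ correction must then be produced by the $\tilde D$-branch of \eqnref{eq:re010102}, whose first term $\tfrac{\varepsilon_s-\varepsilon_0}{\varepsilon_s}\nabla\Scal_{D_{l'}}^0[\nu_{l'}\cdot\hat\bH_0]$ supplies precisely this, while its second term together with the remaining $\Scal_\Sigma^0$-contributions cancel as before. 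In both regions the error terms are uniformly $\Ocal(\omega)$ because every operator appearing is bounded on the relevant trace spaces (as recorded after \eqnref{Mk} and in the proof of Lemma~\ref{le:uniqueinteg01}) and $\hat\bH_0$ is bounded independently of $\omega$.

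\textbf{Main obstacle.} The delicate point is not analytic but algebraic: making sure the coefficients in front of the two $\Scal_\Sigma^0$-type families genuinely cancel, which requires using \eqnref{eq:re010102} in the correct direction and recognizing that the $\Theta$-generated term $-(\varepsilon_s-\varepsilon_0)\nabla\Scal_\Sigma^0[\nu\cdot(\nabla\times\Acal_\Sigma^0[\Xi])]$, after the substitution $\Xi=\sum_{l'}(\Gl_\varepsilon I+\Mcal_\Sigma^0)^{-1}\Mcal_{\Sigma,D_{l'}}^0[\Psi_{l'}^{(0)}]$ and \eqnref{eq:reextr01}, reduces to the \emph{same} kernel $\nabla\Scal_\Sigma^0(\Gl_\varepsilon I+(\Kcal_\Sigma^0)^*)^{-1}\nu\cdot\nabla\Scal_{D_{l'}}^0[\nu_{l'}\cdot\hat\bH_0]$ as the other two. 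One must be careful that the surface-divergence commutation $\nabla_{\p\Sigma}\cdot\Mcal_\Sigma^0=-(\Kcal_\Sigma^0)^*\nabla_{\p\Sigma}\cdot$ (used already in \eqnref{eq:leunique03}) is what converts the $\Mcal_\Sigma^0$ on $\mathrm{TH}(\mathrm{div})$ into the scalar $(\Kcal_\Sigma^0)^*$ on $L^2(\p\Sigma)$ so that the three terms live in a common space where their scalar coefficients can be added. Once this identification is in place the cancellation is immediate and the lemma follows.
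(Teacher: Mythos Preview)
Your approach is correct and mirrors the paper's own (one-line) proof: substitute the two identities of Lemma~\ref{le:eleresult010101} into \eqnref{eq:th01010101} and \eqnref{eq:th01010102} and track the algebraic cancellations region by region. The paper simply cites those three references without spelling out the bookkeeping you describe, so your expanded outline is faithful to---and more detailed than---the original argument.
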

By using the jump formula on $\p D_l$, $l=1, 2, \ldots l_0$ and Lemma \ref{le:eleresult010102}, one can further obtain
\begin{lem}\label{le:eleresult010103}
$\hat\bH_0$ introduced in \eqnref{le:eleresult010101} can be written as
\beq\label{eq:leH1repre01new}
\begin{split}
\hat\bH_0&=\bH_0+\sum_{l'=1}^{l_0}\nabla\Scal_{D_{l'}}^0[\varphi_{l'}]+\Ocal(\omega) \ \ \mbox{in} \ \ (\RR^3\setminus\overline\Sigma)\bigcup \tilde{D},
\end{split}
\eeq
where $\varphi_{l}$, $l=1, 2, \ldots, l_0$ satisfy
\beq\label{eq:leeleresult010103}
\varphi_l=\Big((\mathbb{J}_D^{\varepsilon})^{-1}\big[(\nu_1\cdot\bH_0, \nu_2\cdot\bH_0, \ldots, \nu_{l_0}\cdot\bH_0)^T\big]\Big)_l,
\eeq
with the operator $\mathbb{J}_D^{\varepsilon}$ defined in \eqnref{eq:app10} with $\Gl_{\mu_l}$ replaced by $\Gl_\varepsilon$, $l=1, 2, \ldots, l_0$.
\end{lem}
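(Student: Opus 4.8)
The plan is to invert the identity of Lemma~\ref{le:eleresult010102}, which expresses $\bH_0$ through the normal traces of $\hat\bH_0$ on the interfaces $\p D_l$, so as to read off $\hat\bH_0$ from $\bH_0$. Put $c:=(\varepsilon_0-\varepsilon_s)/\varepsilon_s$, so that \eqref{eq:leH1repre01} reads
\beq\label{eq:ppla}
\hat\bH_0=\bH_0-c\sum_{l'=1}^{l_0}\nabla\Scal_{D_{l'}}^0[\nu_{l'}\cdot\hat\bH_0]+\Ocal(\omega)\quad\mbox{in}\ \ (\RR^3\setminus\overline\Sigma)\cup\tilde D ,
\eeq
and the whole task is to express the densities $\xi_l:=-c\,\nu_l\cdot\hat\bH_0$ through $\bH_0$ alone, after which $\hat\bH_0=\bH_0+\sum_{l'}\nabla\Scal_{D_{l'}}^0[\xi_{l'}]+\Ocal(\omega)$ is already of the advertised shape. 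All traces below are unambiguous: in $\Sigma_s$ the background medium is the uniform pair $(\varepsilon_s,\mu_0)$, so $\bH_0$ is real-analytic near each $\overline{D_l}$; and by Lemma~\ref{le:rep01}, $\hat\bH_0$ solves the background equations in $\tilde D$, hence is smooth up to $\p D_l$ from the $\tilde D$ side.

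The key step is to apply $\nu_l\cdot$ to \eqref{eq:ppla} on $\p D_l$, approaching from $\tilde D$ (the exterior of $D_l$). For $l'\neq l$, $\nabla\Scal_{D_{l'}}^0[\cdot]$ is harmonic across $\p D_l$ and passes through the trace unchanged; for the self term $l'=l$ one uses the trace formula \eqref{eq:trace}, i.e.\ $\nu_l\cdot\nabla\Scal_{D_l}^0[\cdot]\big|_{+}=\bigl(\tfrac{1}{2}I+(\Kcal_{D_l}^0)^*\bigr)[\cdot]$. A short manipulation---using $1+c/2=(\varepsilon_s+\varepsilon_0)/(2\varepsilon_s)$ and $\lambda_\varepsilon=(\varepsilon_s+\varepsilon_0)/\bigl(2(\varepsilon_s-\varepsilon_0)\bigr)$---recasts the resulting identities as the closed block system
\beq\label{eq:pplb}
\bigl(\lambda_\varepsilon I-(\Kcal_{D_l}^0)^*\bigr)[\xi_l]-\sum_{l'\neq l}\nu_l\cdot\nabla\Scal_{D_{l'}}^0[\xi_{l'}]=\nu_l\cdot\bH_0+\Ocal(\omega),\qquad l=1,\ldots,l_0 ,
\eeq
whose coefficient operator is precisely $\mathbb{J}_D^\varepsilon$, i.e.\ the same block operator as $\mathbb{J}_D^\mu$ in \eqref{eq:app10} but with every $\lambda_{\mu_l}$ replaced by $\lambda_\varepsilon$ (the two derivations being structurally identical normal-trace inversions of a single-layer identity). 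Getting this constant exactly right is the point where a slip is easiest; the rest is mechanical.

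It then remains to invert $\mathbb{J}_D^\varepsilon$ on $\bigoplus_{l=1}^{l_0}L^2(\p D_l)$. Since $\varepsilon_s,\varepsilon_0>0$ and $\varepsilon_s\neq\varepsilon_0$, one has $|\lambda_\varepsilon|>1/2$, so each diagonal block $\lambda_\varepsilon I-(\Kcal_{D_l}^0)^*$ is invertible on $L^2(\p D_l)$; the off-diagonal blocks $\nu_l\cdot\nabla\Scal_{D_{l'}}^0$ ($l\neq l'$) are compact because the $D_{l'}$ are mutually disjoint, so $\mathbb{J}_D^\varepsilon$ is Fredholm of index zero, and its injectivity follows from the electrostatic uniqueness argument already used for $\mathbb{J}_D^\mu$ in the appendix (the scheme of Lemma~\ref{le:uniqueinteg01}: a vanishing combination $\sum_{l'}\nabla\Scal_{D_{l'}}^0[\xi_{l'}]$ solves a curl-free, divergence-free transmission problem across the $\p D_l$ with $\varepsilon_s,\varepsilon_0$ data and $\Ocal(\|\Bx\|^{-2})$ decay, forcing all $\xi_l=0$). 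Hence $(\mathbb{J}_D^\varepsilon)^{-1}$ exists and is bounded uniformly for small $\omega$, and applying it to \eqref{eq:pplb} gives $\xi_l=\bigl((\mathbb{J}_D^\varepsilon)^{-1}[(\nu_1\cdot\bH_0,\ldots,\nu_{l_0}\cdot\bH_0)^T]\bigr)_l+\Ocal(\omega)=\varphi_l+\Ocal(\omega)$, with $\varphi_l$ as in \eqref{eq:leeleresult010103}. Substituting back into $\hat\bH_0=\bH_0+\sum_{l'}\nabla\Scal_{D_{l'}}^0[\xi_{l'}]+\Ocal(\omega)$ and absorbing the new error produces \eqref{eq:leH1repre01new}.

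The one place that genuinely requires care is the $\Ocal(\omega)$ bookkeeping. One must check that the remainder in \eqref{eq:leH1repre01} is controlled, near each $\p D_l$, in a norm strong enough that its normal trace is still $\Ocal(\omega)$---which holds because that remainder is assembled from layer potentials evaluated away from their (mutually disjoint) sources, with densities that are $\Ocal(\omega)$ in the relevant $\mathrm{TH}(\mathrm{div})$ norms, hence it is $\Ocal(\omega)$ in $C^k$ near $\p D_l$ for every $k$---and that $(\mathbb{J}_D^\varepsilon)^{-1}$ does not amplify the error as $\omega\to0^+$, which is exactly the uniform bound just recorded. Both are routine given the estimates already established.
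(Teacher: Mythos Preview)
Your proposal is correct and follows essentially the same route as the paper: the paper's proof is a one-line sketch that reads ``By using the jump formula on $\p D_l$, $l=1,2,\ldots,l_0$ and Lemma~\ref{le:eleresult010102}, one can further obtain [the result]'', and your argument is exactly this computation carried out in full---take normal traces of \eqref{eq:leH1repre01} on each $\p D_l$ from the $\tilde D$ side, use the jump relation \eqref{eq:trace} for the self term, and recognize the resulting block system as $\mathbb{J}_D^\varepsilon[\xi]=\nu\cdot\bH_0+\Ocal(\omega)$. Your Fredholm-plus-injectivity justification of the invertibility of $\mathbb{J}_D^\varepsilon$ is a mild upgrade over the paper, which elsewhere (Appendix~B and Section~\ref{sec:02}) treats the analogous $\mathbb{J}_D^\mu$ by a perturbation-of-diagonal argument under the sparseness/smallness assumption on the $D_l$.
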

\subsection{Second level approximation}\label{sec:02}

In the subsequent analysis, we shall make use of the steady part of the magnetic field in the representation formula \eqref{eq:th0101}, namely the leading-order term in the asymptotic low-frequency expansion. By Lemma \ref{le:stelec01}, we see that $\bH_0$ is a gradient filed in $\RR^3\setminus\overline{B}$. In what follows, we let $\bH^{(0)}$ be the leading-order term of $\bH$ in \eqnref{eq:th0101}, $\bH^{(0)}_0$ be the leading-term of $\bH_0$ and $\hat\bH_0^{(0)}$ be the leading-order term of $\hat\bH_0$ (and $\varphi_l^{(0)}$ is the leading term of $\varphi_l$, $l=1, 2, \ldots, l_0$). We would like to point out that if $\Gs_l$, $l=1,2,\ldots, l_0$ are not identically zero, then the leading-order term in \eqnref{eq:th0101} may still depend on $\omega$. In such a case, one needs to perform further asymptotic analysis in terms of the frequency and we shall discuss this point at the suitable place in what follows.

In this section, we consider further asymptotic expansion of the steady fields in terms of the size of the magnetized anomalies. Indeed, from a practical point of view, the size of the magnetized anomalies $(D_l; \varepsilon_l, \mu_l, \sigma_l)$, $l=1,2,\ldots,l_0$, introduced in \eqref{eq:paradef02}, is much smaller than the size of the Earth. Hence, we can assume that
\beq\label{eq:permeab02}
D_l=\delta\Omega+\bZ_l, \quad l=1, 2, \ldots, l_0,
\eeq
where $\Omega$ is a bounded Lipschitz domain in $\RR^3$ and $\Omega\subset\subset \Sigma$, and $\delta\in\mathbb{R}_+$ is sufficiently small.
Furthermore, we assume that $D_l$, $l=1, 2, \ldots, l_0$ are sparsely distributed and far away from each other and $\Bz_l$, $l=1, 2, \ldots, l_0$, are far away from $\p \Sigma$ such that $\Bx-\Bz_l\gg\delta$, for any $\Bx\in\p \Sigma$. With the above preparations, we are in a position to derive the asymptotic expansion of the steady geomagnetic field in terms of the size of the magnetized anomalies. We first have the following lemma
\begin{lem}\label{le:smallobj01}
Suppose $D_l$, $l=1, 2 ,\ldots, l_0$ are defined in \eqnref{eq:permeab02} with $\delta\in\mathbb{R}_+$ sufficiently small. Let $\mathbb{K}_D^*$,  and $\mathbb{M}_D$ be defined in \eqnref{eq:app11} and \eqnref{eq:thpf010302}, respectively. Then we have
\beq\label{eq:lesmobj0101}
\mathbb{K}_D^*=\mathbb{K}_\Omega^*+\Ocal(\delta^2), \quad \mathbb{M}_D=\mathbb{M}_{\Omega} +\Ocal(\delta^2),
\eeq
where $\mathbb{K}_\Omega^*$, $\mathbb{M}_{\Omega}$ are $l_0\times l_0$ and $3l_0\times 3l_0$ matrix-valued operators defined by
\beq\label{eq:lesmobj0102}
\mathbb{K}_\Omega^*:={\rm diag}((\Kcal_{\Omega}^{0})^*,(\Kcal_{\Omega}^{0})^*,\ldots,(\Kcal_{\Omega}^{0})^*), \quad \mathbb{M}_\Omega:={\rm diag}(\Mcal_{\Omega}^{0},\Mcal_{\Omega}^{0},\ldots,\Mcal_{\Omega}^{0}),
\eeq
respectively.
\end{lem}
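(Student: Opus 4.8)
The plan is to prove the two asymptotic identities by a direct scaling argument, exploiting the fact that each $D_l=\delta\Omega+\bZ_l$ is a rescaled translate of the fixed reference domain $\Omega$, together with the standing assumption that the $D_l$ are sparsely distributed and far apart. First I would recall from \eqnref{eq:app11} and \eqnref{eq:thpf010302} that $\mathbb{K}_D^*$ and $\mathbb{M}_D$ are, respectively, the $l_0\times l_0$ and $3l_0\times 3l_0$ matrix-valued operators whose diagonal entries are the Neumann--Poincar\'e operators $(\Kcal_{D_l}^{0})^*$ and the operators $\Mcal_{D_l}^{0}$ on $\p D_l$, and whose off-diagonal entries are the ``cross'' operators of the form $\p_\nu\Scal^0_{D_{l'}}[\,\cdot\,]\big|_{\p D_l}$ (respectively $\nu\times\nabla\times\Acal^0_{D_{l'}}[\,\cdot\,]\big|_{\p D_l}$) for $l\neq l'$.

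The first key step is to treat the diagonal entries. Under the change of variables $\Bx=\delta\tilde\Bx+\bZ_l$, the kernel $\p_{\nu_\Bx}\Gamma_0(\Bx-\By)$ on $\p D_l$ pulls back exactly (up to the Jacobian factor $\delta^2\,ds_{\tilde\By}$ that cancels against the $\delta^{-2}$ scaling of the kernel) to the corresponding kernel on $\p\Omega$, because $\Gamma_0$ is homogeneous of degree $-1$ and the exterior normal is scale-invariant; hence $(\Kcal_{D_l}^{0})^*$ is \emph{unitarily equivalent} to $(\Kcal_{\Omega}^{0})^*$, and likewise $\Mcal_{D_l}^{0}$ to $\Mcal_{\Omega}^{0}$ (here one uses $\nabla\times\Acal^0$ and the identity $\nabla\times\nabla\times=-\Delta+\nabla\nabla\cdot$ together with the degree $-1$ homogeneity of $\Gamma_0$ and degree $-2$ homogeneity of $\nabla\Gamma_0$). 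Identifying the spaces $\mathrm{TH}({\rm div},\p D_l)$ and $\mathrm{TH}({\rm div},\p\Omega)$ via this scaling, the diagonal blocks of $\mathbb{K}_D^*$ and $\mathbb{M}_D$ become exactly $\mathbb{K}_\Omega^*$ and $\mathbb{M}_\Omega$ in \eqnref{eq:lesmobj0102}, with \emph{no} error term.

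The second key step, and the one I expect to be the only real computation, is to bound the off-diagonal entries. For $l\neq l'$ and $\Bx\in\p D_l$, $\By\in\p D_{l'}$, we have $|\Bx-\By|\geq c>0$ uniformly (the domains are far apart and $\dist(\bZ_l,\bZ_{l'})\gg\delta$), so $\Gamma_0(\Bx-\By)$ and all its derivatives are smooth and uniformly bounded on $\p D_l\times\p D_{l'}$. Since $\p D_{l'}$ has surface measure $\Ocal(\delta^2)$, each off-diagonal integral operator has operator norm $\Ocal(\delta^2)$ — indeed, a density $\phi$ on $\p D_{l'}$ contributes $\int_{\p D_{l'}}K(\Bx,\By)\phi(\By)\,ds_\By$ with smooth bounded $K$, and a Cauchy--Schwarz estimate in $\By$ gives the $L^2(\p D_{l'})\to L^2(\p D_l)$ norm bounded by $\|K\|_\infty\,|\p D_{l'}|^{1/2}|\p D_l|^{1/2}=\Ocal(\delta^2)$; the surface-divergence components are handled the same way after noting $\nabla_{\p D_{l'}}\cdot$ also scales controllably. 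The main obstacle, such as it is, is bookkeeping: one must check that the $\mathrm{TH}({\rm div})$-norms (not merely $L^2$-norms) of the off-diagonal pieces are $\Ocal(\delta^2)$, which requires differentiating the smooth cross-kernels once more and invoking the same far-field smoothness; this is routine given the separation hypothesis. Assembling the diagonal (exact) and off-diagonal ($\Ocal(\delta^2)$) contributions yields \eqnref{eq:lesmobj0101}, completing the proof.
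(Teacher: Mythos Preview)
Your proposal is correct and follows essentially the same approach as the paper: exact scale-invariance of the diagonal blocks under the change of variables $\Bx\mapsto\delta^{-1}(\Bx-\bZ_l)$, and an $\Ocal(\delta^2)$ bound on the off-diagonal blocks coming from the $\delta^2$ surface-measure factor together with the smoothness and uniform boundedness of $\Gamma_0$ and its derivatives on well-separated domains. The paper's proof is slightly more computational (writing out the change of variables explicitly and picking up an additional $\delta$ from the chain rule when $\nabla_{\tilde\Bx}$ hits $\|\delta(\tilde\Bx-\tilde\By)+(\bZ_l-\bZ_m)\|^{-1}$), while you phrase the off-diagonal estimate via a Cauchy--Schwarz kernel bound and are somewhat more careful in flagging the $\mathrm{TH}({\rm div})$-norm versus $L^2$-norm distinction; but the underlying argument is the same.
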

\begin{proof}
We only prove the second assertion in \eqnref{eq:lesmobj0101}, and the first one can be proved in a similar manner. For any $\Bx, \By\in \p D_l$, we let $\tdx=\delta^{-1}(\Bx-\Bz_l)$, $\tdy=\delta^{-1}(\By-\Bz_l) \in \p  \Omega$, $l=1, 2,\ldots, l_0$. Define $\tilde{\Phi}(\tdy)= \Phi(\By)$.
By using change of variables, one can show that there holds
\beq\label{eq:lesmobj0104}
\begin{split}
\Mcal_{D_l}^0[\Phi](\Bx)=&\nu_\Bx  \times \nabla_\Bx \times \int_{\p D_l} \Gamma_{0}(\Bx-\By) \Phi(\By) d s_\By \\
=& -\delta^2\frac{1}{4\pi}\nu_\Bx\times \nabla_\Bx\times\int_{\p \Omega} \frac{1}{\|\Bx-\By\|}\tilde\Phi(\tdy) d s_{\tdy} \\
=& -\frac{1}{4\pi}\nu_{\tdx}\times \nabla_{\tdx}\times\int_{\p \Omega} \frac{1}{\|\tdx-\tdy\|}\tilde\Phi(\tdy) d s_{\tdy}=\Mcal_{\Omega}^0[\tilde\Phi](\Bx).
\end{split}
\eeq
On the other hand, letting $\Bx\in \p D_l$ and $\By\in \p D_m$ and $\tdx=\delta^{-1}(\Bx-\Bz_l)$, $\tdy=\delta^{-1}(\By-\Bz_m) \in \p  \Omega$, where $l, m\in \{1, 2, \ldots, l_0\}$ and $l\neq m$, then one can show that
\beq\label{eq:lesmobj0105}
\begin{split}
\Mcal_{D_l, D_m}^{0}[\Phi](\Bx)=&\nu_\Bx\times\nabla_\Bx\times\Acal_{D_m}^{0}[\Phi](\Bx)\\
=&\nu_\Bx  \times \nabla_\Bx \times \int_{\p D_m} \Gamma_{0}(\Bx-\By) \Phi(\By) d s_\By \\
=& -\delta^2\frac{1}{4\pi}\nu_\Bx\times \nabla_\Bx\times\int_{\p \Omega} \frac{1}{\|\Bx-\By\|}\tilde\Phi(\tdy) d s_{\tdy} \\
=& -\delta\frac{1}{4\pi}\nu_{\tdx}\times \nabla_{\tdx}\times\int_{\p \Omega} \frac{1}{\|\delta(\tdx-\tdy)+(\Bz_l-\Bz_m)\|}\tilde\Phi(\tdy) d s_{\tdy}=\Ocal(\delta^2).
\end{split}
\eeq
By substituting \eqnref{eq:lesmobj0104} and \eqnref{eq:lesmobj0105} back into \eqnref{eq:thpf010302}, one readily has \eqnref{eq:lesmobj0101}.

The proof is complete.
\end{proof}

\begin{lem}\label{le:eleresult01}
For any simply connected domain $D_l$ and the gradient filed $\bH_0^{(0)}$ in $\RR^3$, which is divergence free, there holds the following relation
\beq\label{eq:re0102}
\begin{split}
&\frac{1}{\gamma_l-\varepsilon_s}\nabla\times\Acal_{D_l}^{0}(\lambda_{\gamma_l}I+\Mcal_{D_l}^0)^{-1}[\nu_l\times\bH_0^{(0)}]\\
=&\left\{
\begin{array}{cc}
\displaystyle{\frac{1}{\gamma_l-\varepsilon_s}\nabla\Scal_{D_l}^{0}(\lambda_{\gamma_l}I+(\Kcal_{D_l}^0)^*)^{-1}[\nu_l\cdot\bH_0^{(0)}] }&\qquad\ \ \mbox{in} \ \ \RR^3\setminus\overline{D_l},\bigskip\\
\displaystyle{\frac{1}{\gamma_l}\bH_0^{(0)}
+\frac{\varepsilon_s}{\gamma_l(\gamma_l-\varepsilon_s)}\nabla\Scal_{D_l}^{0}(\lambda_{\gamma_l}I+(\Kcal_{D_l}^0)^*)^{-1}[\nu_l\cdot\bH_0^{(0)}]} &\ \ \mbox{in} \ \ D_l.
\end{array}
\right.
\end{split}
\eeq
\end{lem}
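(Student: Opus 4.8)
The plan is to replace the vectorial layer potential on the left by the scalar one on the right through a uniqueness argument for a Laplace transmission problem. Since $\bH_0^{(0)}$ is conservative and divergence free, write $\bH_0^{(0)}=\nabla u$ with $u$ harmonic in a neighbourhood of $\overline{D_l}$, and set
\[
\Phi:=(\lambda_{\gamma_l}I+\Mcal_{D_l}^0)^{-1}[\nu_l\times\nabla u],\quad \bF:=\nabla\times\Acal_{D_l}^0[\Phi],\quad \phi:=(\lambda_{\gamma_l}I+(\Kcal_{D_l}^0)^*)^{-1}[\nu_l\cdot\nabla u].
\]
Let $\bG$ denote the piecewise field on the right-hand side of \eqnref{eq:re0102}, with $\bH_0^{(0)}$ and the density $(\lambda_{\gamma_l}I+(\Kcal_{D_l}^0)^*)^{-1}[\nu_l\cdot\bH_0^{(0)}]$ written out as $u$ and $\phi$. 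These are well defined: $\lambda_{\gamma_l}I\pm(\Kcal_{D_l}^0)^*$ is invertible on $L^2(\p D_l)$ because $\lambda_{\gamma_l}\notin[-\tfrac12,\tfrac12]$ (which follows from $\gamma_l\neq\varepsilon_s$ and $\mathrm{Re}\,\gamma_l=\varepsilon_l>0$), and $\lambda_{\gamma_l}I\pm\Mcal_{D_l}^0$ is invertible on ${\rm TH}({\rm div},\p D_l)$ by the layer-potential facts already used in the paper. What must be shown is $\bF=(\gamma_l-\varepsilon_s)\bG$, which is exactly \eqnref{eq:re0102} after dividing by $\gamma_l-\varepsilon_s$.

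First I would determine the structure of $\bF$. Applying $\nabla_{\p D_l}\cdot$ to the equation defining $\Phi$, and using the relation $\nabla_{\p D_l}\cdot\Mcal_{D_l}^0=-(\Kcal_{D_l}^0)^*\nabla_{\p D_l}\cdot$ together with $\nabla_{\p D_l}\cdot(\nu_l\times\nabla u)=0$, gives $(\lambda_{\gamma_l}I-(\Kcal_{D_l}^0)^*)[\nabla_{\p D_l}\cdot\Phi]=0$, hence $\nabla_{\p D_l}\cdot\Phi=0$. Then $\nabla\cdot\Acal_{D_l}^0[\Phi]=\Scal_{D_l}^0[\nabla_{\p D_l}\cdot\Phi]=0$, so $\Acal_{D_l}^0[\Phi]$ is divergence free; combining this with $\nabla\times\nabla\times=-\Delta+\nabla\nabla\cdot$ and the harmonicity of $\Gamma_0$ off the source shows that $\bF$ is curl free and divergence free in $\RR^3\setminus\p D_l$, is $\Ocal(\|\Bx\|^{-2})$ at infinity, and satisfies $[\nu_l\cdot\bF]=0$ (the normal component of the curl of a continuous vectorial single layer potential has no jump). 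The field $\bG$ has the same three properties. Consequently, in each of the simply connected regions $D_l$ and $\RR^3\setminus\overline{D_l}$ one may write $\bF=\nabla w^{\pm}$ and $\bG=\nabla v^{\pm}$ with $w^{\pm},v^{\pm}$ harmonic and decaying in the exterior.

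Next I would match the transmission data on $\p D_l$. From the jump formula \eqnref{jumpM} and $\Mcal_{D_l}^0[\Phi]=\nu_l\times\nabla u-\lambda_{\gamma_l}\Phi$ one obtains $\nu_l\times\bF|_{\pm}=\nu_l\times\nabla u-(\lambda_{\gamma_l}\pm\tfrac12)\Phi$; using $\lambda_{\gamma_l}+\tfrac12=\gamma_l/(\gamma_l-\varepsilon_s)$ and $\lambda_{\gamma_l}-\tfrac12=\varepsilon_s/(\gamma_l-\varepsilon_s)$ and eliminating $\Phi$ yields
\[
\gamma_l\,(\nu_l\times\bF)\big|_- -\varepsilon_s\,(\nu_l\times\bF)\big|_+ =(\gamma_l-\varepsilon_s)\,\nu_l\times\bH_0^{(0)}\quad\text{on }\p D_l,
\]
together with $[\nu_l\cdot\bF]=0$. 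For $(\gamma_l-\varepsilon_s)\bG$ the two analogous identities are a direct computation: by the trace formula \eqnref{eq:trace}, $[\nu_l\cdot(\gamma_l-\varepsilon_s)\bG]=0$ is precisely the defining equation $(\lambda_{\gamma_l}I+(\Kcal_{D_l}^0)^*)[\phi]=\nu_l\cdot\nabla u$ for $\phi$; and inserting the interior and exterior branches of $\bG$ into the left side of the displayed relation, using that $\nu_l\times\nabla\Scal_{D_l}^0[\phi]$ does not jump across $\p D_l$, produces exactly $(\gamma_l-\varepsilon_s)\,\nu_l\times\bH_0^{(0)}$.

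Finally, uniqueness closes the argument. The difference $\bV:=\bF-(\gamma_l-\varepsilon_s)\bG=\nabla\psi^{\pm}$ is curl free and divergence free off $\p D_l$, decays, and satisfies $[\nu_l\cdot\nabla\psi]=0$ and $\gamma_l\nabla_{\p D_l}\psi^-=\varepsilon_s\nabla_{\p D_l}\psi^+$, so that $\gamma_l\psi^- -\varepsilon_s\psi^+$ equals a constant $c$ on the connected $\p D_l$. Applying Green's identity in $D_l$ and in $\RR^3\setminus\overline{D_l}$ (the boundary term at infinity vanishing by the decay) gives
\[
\overline{\gamma_l}\int_{D_l}|\nabla\psi^-|^2+\varepsilon_s\int_{\RR^3\setminus\overline{D_l}}|\nabla\psi^+|^2 =\int_{\p D_l}\overline{(\gamma_l\psi^- -\varepsilon_s\psi^+)}\,(\nu_l\cdot\nabla\psi^-)\,ds =\overline{c}\int_{\p D_l}\nu_l\cdot\nabla\psi^-\,ds=0,
\]
so that, taking real parts and using $\mathrm{Re}\,\gamma_l=\varepsilon_l>0$ and $\varepsilon_s>0$, $\nabla\psi^{\pm}\equiv0$, i.e. $\bF=(\gamma_l-\varepsilon_s)\bG$, which is \eqnref{eq:re0102}. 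The step I expect to be most delicate is the sign and convention bookkeeping in the two preceding paragraphs — carrying the paper's $\pm$-trace conventions through $\nu_l\times\nabla\times\Acal_{D_l}^0$ and $\nu_l\cdot\nabla\Scal_{D_l}^0$, justifying $\nabla_{\p D_l}\cdot\Mcal_{D_l}^0=-(\Kcal_{D_l}^0)^*\nabla_{\p D_l}\cdot$ and the no-jump of $\nu_l\cdot\nabla\times\Acal_{D_l}^0$ on a Lipschitz $D_l$, and verifying that the arithmetic with $\lambda_{\gamma_l}\pm\tfrac12$ reproduces precisely the coefficients $1/(\gamma_l-\varepsilon_s)$, $1/\gamma_l$ and $\varepsilon_s/(\gamma_l(\gamma_l-\varepsilon_s))$ in \eqnref{eq:re0102}; once these are in place, the remainder is routine.
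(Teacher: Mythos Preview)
Your argument is correct and is precisely the kind of proof the paper is pointing to: the paper's own ``proof'' is a one-line reference to Lemma~5.5 in \cite{ADM14}, whose method is exactly the transmission-problem/uniqueness scheme you carry out here (show $\nabla_{\p D_l}\!\cdot\Phi=0$, identify both sides as curl- and divergence-free fields with matching normal and weighted-tangential traces on $\p D_l$, then conclude by an energy identity). Your bookkeeping with $\lambda_{\gamma_l}\pm\tfrac12$ and the jump relations \eqref{eq:trace}, \eqref{jumpM} checks out, and the final Green's identity step is the standard way to force $\nabla\psi^\pm\equiv0$; the only cosmetic point is that writing $\bF=\nabla w^+$ in $\RR^3\setminus\overline{D_l}$ uses that the exterior of a simply-connected Lipschitz domain in $\RR^3$ is itself simply connected, which holds in the setting of the paper.
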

\begin{proof}
Note that $\bH_0^{(0)}$ is the gradient of a harmonic function. The proof of \eqnref{eq:re0102} follows from a similar argument to that in the proof of Lemma 5.5 in \cite{ADM14}.
\end{proof}
\begin{thm}\label{le:maggra01}
Suppose $D_l$, $l=1, 2 ,\ldots, l_0$ are defined in \eqnref{eq:permeab02} with $\delta\in\mathbb{R}_+$ sufficiently small. Let $(\bE,\bH)$ be the solution to \eqnref{eq:paradef02} and \eqnref{eq:pss02}. Then for $\Bx\in \RR^3\setminus\Sigma$, there holds the following asymptotic expansion result
\beq\label{eq:lemag0101}
\begin{split}
&\bH^{(0)}(\Bx)=
\bH_0^{(0)}(\Bx)-\delta^3\sum_{l=1}^{l_0}\nabla\big(\nabla\Gamma_0(\Bx-\Bz_l)^T\mathbf{P}_0\bH_0^{(0)}(\Bz_l)\big)\\
&-\delta^3\sum_{l=1}^{l_0}\Big(\varepsilon_0\nabla\big(\nabla\Gamma_0(\Bx-\Bz_l)^T\mathbf{D}_l\bH_0^{(0)}(\Bz_l)\big) -\mu_0\nabla\big(\nabla\Gamma_0(\Bx-\Bz_l)^T\mathbf{M}_l\bH_0^{(0)}(\Bz_l)\big)\Big)+\Ocal(\delta^4),
\end{split}
\eeq
where $\mathbf{P}_0$ is defined by
\beq\label{eq:leasymdefP0}
\mathbf{P}_0:=\int_{\p \Omega}\tilde\By(\lambda_{\varepsilon} I-(\Kcal_{\Omega}^{0})^*)^{-1}[\nu_l]ds_{\tilde\By}.
\eeq
The polarization tensors $\mathbf{D}_l$ and $\mathbf{M}_l$ are $3\times 3$ matrices defined by
\beq\label{eq:leasym02}
\mathbf{D}_l=\frac{1}{\gamma_l-\varepsilon_s}\frac{\varepsilon_s}{\varepsilon_s-\varepsilon_0}\int_{\p \Omega}\tilde\By(\lambda_{\gamma_l} I+(\Kcal_{\Omega}^{0})^*)^{-1}(\lambda_{\varepsilon} I-(\Kcal_{\Omega}^{0})^*)^{-1}[\nu_l]ds_{\tilde\By},
\eeq
and
\beq\label{eq:leasym03}
\mathbf{M}_l=\frac{1}{\mu_l-\mu_0}\frac{\varepsilon_s}{\varepsilon_s-\varepsilon_0}\int_{\p \Omega}\tilde\By(\lambda_{\mu_l} I-(\Kcal_{\Omega}^{0})^*)^{-1}(\lambda_{\varepsilon} I-(\Kcal_{\Omega}^{0})^*)^{-1}[\nu_l]ds_{\tilde\By},
\eeq
respectively, $l=1, 2, \ldots, l_0$. More specifically, let $\mathbf{D}_l=((\mathbf{D}_l)_{mn})$, $m,n=1, 2, 3$, $\tdy=(\tdy_1, \tdy_2, \tdy_3)^T$ and $\nu_l=(\nu_l^{(1)}, \nu_l^{(2)}, \nu_l^{(3)})^T$, we have
$$(\mathbf{D}_l)_{mn}=\frac{1}{\gamma_l-\varepsilon_s}\frac{\varepsilon_s}{\varepsilon_s-\varepsilon_0}\int_{\p \Omega}\tilde\By_m(\lambda_{\gamma_l} I+(\Kcal_{\Omega}^{0})^*)^{-1}(\lambda_{\varepsilon} I-(\Kcal_{\Omega}^{0})^*)^{-1}[\nu_l^{(n)}]ds_{\tilde\By},$$
where $e_{mn}=1$ for $m=n$ and $e_{mn}=0$ for $m\neq n$. $\mathbf{P}_0$ and $\mathbf{M}_l$ have similar forms.
\end{thm}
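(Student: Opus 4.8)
The plan is to start from the first-level approximation in Theorem~\ref{th:0101}, which already expresses $\bH^{(0)}$ outside $\Sigma$ as $\hat\bH_0$ plus a sum of single-layer-type terms over $\p\Sigma$ and over the $\p D_{l'}$, and then to extract the $\delta^3$ asymptotics by using the scaling $D_l=\delta\Omega+\Bz_l$. First I would rewrite all the anomaly contributions in \eqnref{eq:th0101} purely in terms of the densities $\Psi_{l'}^{(0)}$ and $\Pi_{l'}$, use Lemma~\ref{le:eleresult010103} to replace $\hat\bH_0$ by $\bH_0$ plus the correction $\sum_{l'}\nabla\Scal_{D_{l'}}^0[\varphi_{l'}]$, and then invoke Lemma~\ref{le:eleresult01} (with $\bH_0^{(0)}$ the leading term of $\bH_0$, which is a divergence-free gradient field by Lemma~\ref{le:stelec01}) to turn the vectorial single-layer $\nabla\times\Acal_{D_l}^0(\lambda_{\gamma_l}I+\Mcal_{D_l}^0)^{-1}[\nu_l\times\bH_0^{(0)}]$ into a scalar single-layer $\nabla\Scal_{D_l}^0(\lambda_{\gamma_l}I+(\Kcal_{D_l}^0)^*)^{-1}[\nu_l\cdot\bH_0^{(0)}]$. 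After this reduction every anomaly term has the schematic form $\nabla\Scal_{D_l}^0(\lambda I-(\Kcal_{D_l}^0)^*)^{-1}[\nu_l\cdot\bH_0^{(0)}]$ for the appropriate contrast parameter $\lambda\in\{\lambda_\varepsilon,\lambda_{\gamma_l},\lambda_{\mu_l}\}$.

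Next I would carry out the small-volume expansion of a generic term of this type. Using $D_l=\delta\Omega+\Bz_l$ and the change of variables $\tdy=\delta^{-1}(\By-\Bz_l)$, Lemma~\ref{le:smallobj01} gives $(\Kcal_{D_l}^0)^*=(\Kcal_\Omega^0)^*+\Ocal(\delta^2)$ acting on the rescaled density, so $(\lambda I-(\Kcal_{D_l}^0)^*)^{-1}[\nu_l\cdot\bH_0^{(0)}]$ becomes $(\lambda I-(\Kcal_\Omega^0)^*)^{-1}[\nu_l\cdot\bH_0^{(0)}(\delta\tdy+\Bz_l)]+\Ocal(\delta^2)$. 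Taylor-expanding $\bH_0^{(0)}(\delta\tdy+\Bz_l)=\bH_0^{(0)}(\Bz_l)+\delta\,D\bH_0^{(0)}(\Bz_l)\tdy+\Ocal(\delta^2)$ and using that $\int_{\p\Omega}(\lambda I-(\Kcal_\Omega^0)^*)^{-1}[\nu_l]\,ds=0$ (the constant term integrates to zero because $1$ is in the kernel of the relevant operator, equivalently by the divergence theorem applied to a harmonic function) kills the zeroth-order piece, so the leading contribution is the $\delta$-term, giving a factor $\delta$ from the Taylor coefficient. Then for $\Bx\in\RR^3\setminus\Sigma$ with $\Bx-\Bz_l\gg\delta$ I would expand the kernel $\nabla_\Bx\Gamma_0(\Bx-\By)=\nabla\Gamma_0(\Bx-\Bz_l)+\Ocal(\delta)$ and pull the surface integral over $\p D_l$ outside, which contributes $\delta^2$ from the surface measure. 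Collecting the factors $\delta\cdot\delta^2=\delta^3$, and recognizing the surface integral $\int_{\p\Omega}\tdy\,(\lambda I-(\Kcal_\Omega^0)^*)^{-1}[\nu_l]\,ds_{\tdy}$ as the polarization tensor, one obtains each term in the form $\delta^3\nabla\big(\nabla\Gamma_0(\Bx-\Bz_l)^T\,\mathbf{T}_l\,\bH_0^{(0)}(\Bz_l)\big)$, with $\mathbf{T}_l=\mathbf{P}_0,\mathbf{D}_l,\mathbf{M}_l$ according to which contrast appears; the composite tensors $\mathbf{D}_l$ and $\mathbf{M}_l$ arise because the $\varphi_{l'}$ and $\Pi_{l'}$ densities already carry one factor $(\lambda_\varepsilon I-(\Kcal_\Omega^0)^*)^{-1}$ through $\mathbb{J}_D^\varepsilon$ and $\mathbb{J}_D^\mu$, and applying Lemma~\ref{le:eleresult01} or Lemma~\ref{le:eleresult010101} composes it with the second resolvent. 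The $\Ocal(\delta^4)$ remainder comes from the three sources of higher-order corrections: the $\Ocal(\delta^2)$ in Lemma~\ref{le:smallobj01}, the quadratic Taylor remainder of $\bH_0^{(0)}$, and the $\Ocal(\delta)$ in the kernel expansion, each of which, after the $\delta^2$ from the surface measure, yields at worst $\delta^4$; the cross terms $\Mcal_{D_l,D_m}^0$ between distinct anomalies are $\Ocal(\delta^2)$ smaller still by \eqnref{eq:lesmobj0105} and the separation hypothesis, so they do not contribute at order $\delta^3$.

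The main obstacle I expect is bookkeeping the composition of the two resolvent operators correctly and verifying that the zeroth-order (volume-independent) terms genuinely cancel, so that the expansion starts at $\delta^3$ rather than $\delta^2$; this cancellation is exactly where the divergence-free and gradient structure of $\bH_0^{(0)}$ (Lemma~\ref{le:stelec01}) is essential, since it forces $\int_{\p\Omega}(\lambda I-(\Kcal_\Omega^0)^*)^{-1}[\nu_l]\,ds_{\tdy}=0$. A secondary technical point is justifying that the various $\Ocal(\omega)$ and $\Ocal(\delta^2)$ errors from the earlier lemmas can be propagated through the resolvents and the far-field kernel expansion uniformly for $\Bx$ in the measurement region $\RR^3\setminus\Sigma$; this is routine given the boundedness and invertibility statements already established (the operators $\lambda I\pm(\Kcal^0)^*$ and $\lambda I+\Mcal^0$ are invertible on the relevant spaces, and the layer potentials are smooth away from the boundaries), but it needs to be stated carefully so that the two asymptotic parameters $\omega$ and $\delta$ do not interfere. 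Once the generic term is handled, the final formula \eqnref{eq:lemag0101} follows by summing over $l$ and identifying $\mathbf{P}_0$, $\mathbf{D}_l$, $\mathbf{M}_l$ with the explicit integrals \eqnref{eq:leasymdefP0}--\eqnref{eq:leasym03}.
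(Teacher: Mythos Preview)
Your overall strategy matches the paper's proof closely: start from \eqnref{eq:th0101}, use Lemma~\ref{le:eleresult010103} to pass from $\hat\bH_0$ to $\bH_0$, convert the vectorial layer via Lemma~\ref{le:eleresult01}, decouple the anomalies using Lemma~\ref{le:smallobj01}, and then run the small-volume expansion. The paper organizes the first reduction slightly differently---it first invokes Lemma~\ref{le:eleresult010101} to collapse the $\partial\Sigma$ layers, arriving at the compact form \eqnref{eq:leador0101new}---but your route via Lemma~\ref{le:eleresult010103} is equivalent.

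There is, however, a genuine bookkeeping error in your small-volume step. You write that the zero-mean condition kills the zeroth Taylor term of the density, so the surviving $\delta^3$ contribution is (leading kernel $\nabla\Gamma_0(\Bx-\Bz_l)$) $\times$ (linear density term $\delta\,(\lambda I-(\Kcal_\Omega^0)^*)^{-1}[\nu_l\cdot D\bH_0^{(0)}(\Bz_l)\tdy]$) $\times$ ($\delta^2$ from surface measure). But this cross-term is \emph{not} where the polarization tensor comes from: its surface integral $\int_{\p\Omega}(\lambda I-(\Kcal_\Omega^0)^*)^{-1}[\nu_l\cdot D\bH_0^{(0)}(\Bz_l)\tdy]\,ds$ also vanishes, by the very same zero-mean argument (the linear field $D\bH_0^{(0)}(\Bz_l)\tdy$ has divergence $\mathrm{tr}\,D\bH_0^{(0)}(\Bz_l)=\Delta u_0(\Bz_l)=0$, so its normal trace has zero mean). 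The factor $\tdy$ in the tensor $\int_{\p\Omega}\tdy\,(\lambda I-(\Kcal_\Omega^0)^*)^{-1}[\nu_l]\,ds$ that you correctly identify comes instead from the \emph{kernel} expansion
\[
\nabla\Gamma_0(\Bx-\By)=\nabla\Gamma_0(\Bx-\Bz_l)-\delta\,\nabla^2\Gamma_0(\Bx-\Bz_l)\tdy+\Ocal(\delta^2),
\]
paired with the \emph{constant} density $(\lambda I-(\Kcal_\Omega^0)^*)^{-1}[\nu_l\cdot\bH_0^{(0)}(\Bz_l)]$. This is exactly how the paper proceeds in \eqnref{eq:leasym0104}--\eqnref{eq:leasym0105}: the zero-mean fact \eqnref{eq:leasym0106} (that $\int_{\p\Omega}\tilde Q_l=\Ocal(\delta^2)$ and $\int_{\p\Omega}\tilde\Pi_l'=\Ocal(\delta^2)$) kills the $\delta^2$ term, and the $\delta^3$ term is $\nabla^2\Gamma_0(\Bx-\Bz_l)\int_{\p\Omega}\tdy\,\tilde Q_l\,ds$, into which one then substitutes the leading Taylor value $\bH_0^{(0)}(\Bz_l)$ via \eqnref{eq:leasym0103new}--\eqnref{eq:leasym0104new}. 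So you must retain the $\nabla^2\Gamma_0$ term of the kernel; truncating at $\nabla\Gamma_0(\Bx-\Bz_l)+\Ocal(\delta)$ as you propose loses precisely the contribution you are after.
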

\begin{proof}
First,  we note that either $\omega\gamma_l$ or $1/(\gamma_l-\varepsilon_l)$ is of order $\omega$, $l=1, 2, \ldots l_0$, no matter $\Gs$ is zero or nonzero. One can immediately find that the second term in \eqnref{eq:th0103} is of order $\omega$.
By \eqnref{eq:th0101}, it then can be seen that the leading-order term $\bH^{(0)}$ has the following form
\beq\label{eq:leador0101}
\begin{split}
\bH^{(0)}=&\hat\bH_0^{(0)}-\varepsilon_0\sum_{l=1}^{l_0}\nabla\times\Acal_\Sigma^0\Big(\Gl_{\varepsilon} I +\Mcal_{\Sigma}^0\Big)^{-1}\nu\times\nabla\times\Acal_{D_{l}}^0[\Theta_{l}']\\
&+(\varepsilon_s-\varepsilon_0)\sum_{l=1}^{l_0}\nabla\Scal_{\Sigma}^0\Big[\nu\cdot\Big(\nabla\times\Acal_{D_{l}}^0[\Theta_{l}']\Big)\Big]\\
&-(\varepsilon_s-\varepsilon_0)\sum_{l=1}^{l_0}\nabla\Scal_{\Sigma}^0\nu\cdot\nabla\times\Acal_\Sigma^0\Big(\Gl_{\varepsilon} I +\Mcal_{\Sigma}^0\Big)^{-1}\nu\times\nabla\times\Acal_{D_{l}}^0[\Theta_{l}']\\
&+\sum_{l=1}^{l_0}\Big(\varepsilon_0\nabla\times\Acal_{D_{l}}^{0}[\Theta_{l}']-\mu_0\nabla\Scal_{D_{l}}^{0}[\Pi_{l}']\Big) \quad \mbox{in} \ \ \ \RR^3\setminus\Sigma,
\end{split}
\eeq
where $\Theta_l'$ and $\Pi_{l}'$ are defined by
$$\Theta_l'=\Big((\mathbb{L}_D^{\gamma})^{-1}\big[(\frac{(\nu_1\times\hat\bH_0^{(0)})^T}{\gamma_1-\varepsilon_s}, \frac{(\nu_2\times\hat\bH_0^{(0)})^T}{\gamma_2-\varepsilon_s}, \ldots, \frac{(\nu_{l_0}\times\hat\bH_0^{(0)})^T}{\gamma_{l_0}-\varepsilon_s})^T\big]\Big)\cdot(\mathbf{e}_l\otimes(1, 1, 1)^T),$$
and
$$
\Pi_l'=\Big((\mathbb{J}_D^{\mu})^{-1}\big[(\frac{\nu_1\cdot\hat\bH_0^{(0)}}{\mu_1-\mu_0}, \frac{\nu_2\cdot\hat\bH_0^{(0)}}{\mu_2-\mu_0}, \ldots, \frac{\nu_{l_0}\cdot\hat\bH_0^{(0)}}{\mu_{l_0}-\mu_0})^T\big]\Big)_l,
$$
respectively, $l=1, 2, \ldots, l_0$. It can be verified that
$$\nabla_{\p D_l}\cdot \Theta_l'=0, \quad l=1, 2, \ldots, l_0.$$
Thus $\nabla\times\Acal_{D_{l}}^0[\Theta_{l}']$ is a gradient field of harmonic function in $\RR^3\setminus\overline{D}_l$. By using
Lemma \ref{le:eleresult010101}, one can derive that
\beq\label{eq:leador0101new}
\begin{split}
\bH^{(0)}=&\hat\bH_0^{(0)}+\sum_{l=1}^{l_0}\Big(\varepsilon_0\nabla\times\Acal_{D_{l}}^{0}[\Theta_{l}']-\mu_0\nabla\Scal_{D_{l}}^{0}[\Pi_{l}']\Big) \quad \mbox{in} \ \ \ \RR^3\setminus\Sigma,
\end{split}
\eeq
As before, for $\By\in \p D_l$, we let $\tdy=\delta^{-1}(\By-\Bz_l) \in \p  \Omega$, and define $\tilde\Theta_l'(\tdy):=\Theta_l'(\By)$, $\tilde\Pi_l'(\tdy):=\Pi_l'(\By)$, $l=1, 2,\ldots, l_0$ and $\tilde{\hat\bH}_0^{(0)}(\tdy):=\hat\bH_0^{(0)}(\By)$. Then by Lemma \ref{le:smallobj01}, one has
\beq\label{eq:leasym0101}
\tilde\Theta_l'(\tdy)=\frac{1}{\gamma_l-\varepsilon_s}(\lambda_{\gamma_l} I+\Mcal_\Omega^0)^{-1}[\nu_l\times\tilde{\hat\bH}_0^{(0)}](\tdy)+\Ocal(\delta^2),
\eeq
and
\beq\label{eq:leasym0102}
\tilde\Pi_l'(\tdy)=\frac{1}{\mu_l-\mu_0}(\lambda_{\mu_l} I-(\Kcal_\Omega^0)^*)^{-1}[\nu_l\cdot\tilde{\hat\bH}_0^{(0)}](\tdy)+\Ocal(\delta^2),
\eeq
$l=1, 2, \ldots, l_0$.
Hence by using \eqnref{eq:re0102}, there holds
\beq\label{eq:leasym010101}
\begin{split}
\nabla\times\Acal_{D_{l}}^0[\Theta_{l}']=&\frac{1}{\gamma_l-\varepsilon_s}\nabla\times\Acal_{D_{l}}^0(\lambda_{\gamma_l} I+\Mcal_{D_l}^0)^{-1}[\nu_l\times\hat\bH_0^{(0)}](\tdy)+\Ocal(\delta^4)\\
=&\frac{1}{\gamma_l-\varepsilon_s}\nabla\Scal_{D_l}^0(\lambda_{\gamma_l} I+(\Kcal_{D_l}^0)^*)^{-1}[\nu_l\cdot\hat\bH_0^{(0)}](\tdy)+\Ocal(\delta^4)\\
:=&\nabla\Scal_{D_l}^0[Q_l]+\Ocal(\delta^4)\quad \mbox{in} \ \ \RR^3\setminus\Sigma.
\end{split}
\eeq
On the other hand, by the Taylor expansion, there holds
\beq\label{eq:leasym0103}
\bH_0^{(0)}(\By)=\bH_0^{(0)}(\Bz_l)+\delta\nabla\bH_0^{(0)}(\Bz_l)\tdy+\Ocal(\delta^2),
\eeq
and so by using \eqnref{eq:leH1repre01new} one has
\beq\label{eq:leasym0103new}
\nu\cdot\tilde{\hat\bH}_0^{(0)}(\tdy)=\nu_l\cdot\hat\bH_0^{(0)}(\By)=\nu_l\cdot\bH_0^{(0)}(\Bz_l)+\Big(\frac{I}{2}+(\Kcal_\Omega^0)^*\Big)[\tilde\varphi_l^{(0)}](\tdy)+\Ocal(\delta),
\eeq
where $\tilde\varphi_l^{(0)}(\tilde\By):=\varphi_l^{(0)}(\By)=\varphi_l(\By)+\Ocal(\omega)$ and by \eqnref{eq:leeleresult010103} and \eqnref{eq:leasym0103} one has
\beq\label{eq:leasym0104new}
\tilde\varphi_l^{(0)}(\tilde\By)=\Big(\Gl_\varepsilon I -(\Kcal_\Omega^0)^*\Big)^{-1}[\nu_l\cdot\bH_0^{(0)}]+\Ocal(\delta^2)=\Big(\Gl_\varepsilon I -(\Kcal_\Omega^0)^*\Big)^{-1}[\nu_l\cdot\bH_0^{(0)}(\Bz_l)]+\Ocal(\delta).
\eeq
For $\Bx-\Bz_l\gg\delta$ there also holds
\beq\label{eq:leasym0104}
\Gamma_0(\Bx-\By)=\Gamma_0(\Bx-\Bz_l)-\delta\nabla\Gamma_0(\Bx-\Bz_l)^T\tdy+\Ocal(\delta^2).
\eeq
Define $\tilde{Q}_l(\tdy):=Q_l(\By)$, where $Q_l$ is given in \eqnref{eq:leasym010101}.
By using change of variables and substituting \eqnref{eq:leasym0101}-\eqnref{eq:leasym0104} into \eqnref{eq:leador0101new} and using \eqnref{eq:leH1repre01new}, one thus has
\beq\label{eq:leasym0105}
\begin{split}
\bH^{(0)}(\Bx)=&\bH_0^{(0)}(\Bx)-\delta^3\sum_{l=1}^{l_0}\nabla^2\Gamma_0(\Bx-\Bz_l)\mathbf{P}_0\bH_0^{(0)}(\Bz_l)
-\delta^3\sum_{l=1}^{l_0}\varepsilon_0\int_{\p \Omega}\nabla^2\Gamma_0(\Bx-\Bz_l)^T\tdy \tilde Q_l \\
&+\delta^3\sum_{l=1}^{l_0}\Big(\mu_0\frac{1}{\mu_l-\mu_0}\int_{\p \Omega}\nabla^2\Gamma_0(\Bx-\Bz_l)\tdy(\lambda_{\mu_l} I-(\Kcal_\Omega^0)^*)^{-1}[\nu\cdot\tilde{\hat\bH}_0^{(0)}](\tdy)\Big)+\Ocal(\delta^4), \\
=&\bH_0^{(0)}(\Bx)-\delta^3\sum_{l=1}^{l_0}\nabla^2\Gamma_0(\Bx-\Bz_l)\mathbf{P}_0\bH_0^{(0)}(\Bz_l)
-\delta^3\sum_{l=1}^{l_0}\nabla^2\Gamma_0(\Bx-\Bz_l)\mathbf{D}_l\bH_0^{(0)}(\Bz_l)\\
&+\delta^3\sum_{l=1}^{l_0}\nabla^2\Gamma_0(\Bx-\Bz_l)\mathbf{M}_l\bH_0^{(0)}(\Bz_l)+\Ocal(\delta^4). \\
\end{split}
\eeq
The first equality of \eqnref{eq:leasym0105} is obtained by using the following fact
\beq\label{eq:leasym0106}
\int_{\p \Omega}\tilde Q_l=\Ocal(\delta^2), \quad \int_{\p \Omega}\tilde\Pi_l'=\Ocal(\delta^2).
\eeq
Indeed, in order to show \eqref{eq:leasym0106}, we set
$$
\phi_l(\tdy):=(\lambda_{\gamma_l} I+(\Kcal_\Omega^0)^*)^{-1}[\nu_l\cdot\tilde{\hat\bH}_0^{(0)}](\tdy),
$$
By using the jump formula \eqnref{eq:trace} and integration by parts, one can show that there holds
\[
\begin{split}
0=&\int_{\p \Omega}\nu_l\cdot\tilde{\hat\bH}_0^{(0)}=\int_{\p \Omega}(\lambda_{\gamma_l} I+(\Kcal_\Omega^0)^*)[\phi_l]\\
=&(\lambda_{\gamma_l}+1/2)\int_{\p \Omega}\phi_l-\int_{\p \Omega}(1/2I-(\Kcal_\Omega^0)^*)[\phi_l]\\
=&(\lambda_{\gamma_l}+1/2)\int_{\p \Omega}\phi_l-\int_{\p \Omega}\nu\cdot\nabla\Scal_\Omega^0[\phi_l]\Big|_-
=(\lambda_{\gamma_l}+1/2)\int_{\p \Omega}\phi_l,
\end{split}
\]
which readily proves the first assertion in \eqnref{eq:leasym0106}. The second assertion in \eqnref{eq:leasym0106} can be proven in a similar manner.

The proof is complete.
\end{proof}
For notational convenience, in the sequel, we introduce the matrix $\mathbf{P}_l$ by
\beq\label{eq:matM01}
\mathbf{P}_l:=\mu_0\mathbf{M}_l-\varepsilon_0\mathbf{D}_l-\mathbf{P}_0,
\eeq
where $\mathbf{M}_l$ and $\mathbf{D}_l$ are defined in \eqnref{eq:leasym02} and \eqnref{eq:leasym03}, respectively. We have the following axillary results
\begin{lem}\label{le:ax0000101}
If $\Gs_l\neq 0$, $l=1,2,\ldots, l_0$ and $\varepsilon_s=\varepsilon_0$, then $\mathbf{P}_l=\mu_0\mathbf{M}_l+\Ocal(\omega)$ is nonsingular.
\end{lem}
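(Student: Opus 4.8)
The plan is to extract, from the definition \eqnref{eq:matM01} of $\mathbf{P}_l=\mu_0\mathbf{M}_l-\varepsilon_0\mathbf{D}_l-\mathbf{P}_0$, the single nondegenerate ingredient $\mathbf{M}_l$, and to show the other two terms are negligible under the hypotheses. When $\varepsilon_s=\varepsilon_0$ the parameter $\Gl_\varepsilon=(\varepsilon_s+\varepsilon_0)/(2(\varepsilon_s-\varepsilon_0))$ degenerates to $+\infty$; consistently with the derivation of these formulas (a vanishing permittivity contrast across $\p\Sigma$ removes the corresponding transmission condition), one takes $(\Gl_\varepsilon I-(\Kcal_\Omega^0)^*)^{-1}=0$, so that $\mathbf{P}_0=0$ by \eqnref{eq:leasymdefP0}, while the combination that actually occurs inside $\mathbf{D}_l$ and $\mathbf{M}_l$, namely $\tfrac{\varepsilon_s}{\varepsilon_s-\varepsilon_0}(\Gl_\varepsilon I-(\Kcal_\Omega^0)^*)^{-1}=\tfrac{2\varepsilon_s}{\varepsilon_s+\varepsilon_0}\bigl(I-\Gl_\varepsilon^{-1}(\Kcal_\Omega^0)^*\bigr)^{-1}$, is well defined and equals the identity in this regime. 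Hence \eqnref{eq:leasym02}--\eqnref{eq:leasym03} collapse to
\[
\mathbf{D}_l=\frac{1}{\gamma_l-\varepsilon_s}\int_{\p\Omega}\tilde\By\,(\lambda_{\gamma_l}I+(\Kcal_\Omega^0)^*)^{-1}[\nu_l]\,ds_{\tilde\By},\qquad \mathbf{M}_l=\frac{1}{\mu_l-\mu_0}\int_{\p\Omega}\tilde\By\,(\lambda_{\mu_l}I-(\Kcal_\Omega^0)^*)^{-1}[\nu_l]\,ds_{\tilde\By}.
\]

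Next I would bound these two matrices. Because $\gamma_l=\varepsilon_l+i\Gs_l/\omega$ with $\Gs_l\neq0$, we have $\gamma_l-\varepsilon_s=(\varepsilon_l-\varepsilon_s)+i\Gs_l/\omega$, so $|\gamma_l-\varepsilon_s|^{-1}=\Ocal(\omega)$ and $\lambda_{\gamma_l}=(\gamma_l+\varepsilon_s)/(2(\gamma_l-\varepsilon_s))=\tfrac12+\Ocal(\omega)$. Writing $\lambda_{\gamma_l}I+(\Kcal_\Omega^0)^*=\bigl(\tfrac12 I+(\Kcal_\Omega^0)^*\bigr)+(\lambda_{\gamma_l}-\tfrac12)I$ exhibits it as an $\Ocal(\omega)$-perturbation of the invertible operator $\tfrac12 I+(\Kcal_\Omega^0)^*$ on $L^2(\p\Omega)$ (the invertibility used repeatedly in Lemma~\ref{le:uniqueinteg01}), so $(\lambda_{\gamma_l}I+(\Kcal_\Omega^0)^*)^{-1}$ is uniformly bounded for $\omega$ small and thus $\mathbf{D}_l=\Ocal(\omega)$. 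On the other hand $\lambda_{\mu_l}=(\mu_l+\mu_0)/(2(\mu_l-\mu_0))$ is a fixed real number with $|\lambda_{\mu_l}|>\tfrac12$ since $\mu_l\neq\mu_0$, which places $\lambda_{\mu_l}$ in the resolvent set of $(\Kcal_\Omega^0)^*$ on $L^2(\p\Omega)$; hence $(\lambda_{\mu_l}I-(\Kcal_\Omega^0)^*)^{-1}$ is bounded and $\mathbf{M}_l=\Ocal(1)$ is $\omega$-independent. Substituting $\mathbf{P}_0=0$ and $\mathbf{D}_l=\Ocal(\omega)$ into \eqnref{eq:matM01} gives $\mathbf{P}_l=\mu_0\mathbf{M}_l+\Ocal(\omega)$.

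It remains to verify that $\mathbf{M}_l$ is nonsingular. Componentwise, with $\nu_l^{(n)}=\p_\nu\tilde\By_n$ on $\p\Omega$, the matrix $(\mu_l-\mu_0)\mathbf{M}_l$ is exactly the first-order (P\'olya--Szeg\H{o}) polarization tensor $M(\lambda_{\mu_l},\Omega)$ of $\Omega$ for the contrast $k=\mu_l/\mu_0$. By the classical definiteness of this tensor (derivable from the energy identity for $M(\lambda,\Omega)$ together with the fact that the $L^2$-spectrum of $(\Kcal_\Omega^0)^*$ lies in $(-\tfrac12,\tfrac12]$; see e.g.\ Ammari--Kang), $M(\lambda_{\mu_l},\Omega)$ is symmetric and positive definite when $\mu_l>\mu_0$ and negative definite when $\mu_l<\mu_0$; in either case $\mathbf{M}_l=(\mu_l-\mu_0)^{-1}M(\lambda_{\mu_l},\Omega)$, and hence $\mu_0\mathbf{M}_l$, is symmetric positive definite, in particular nonsingular. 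Since the determinant is continuous and $\mathbf{P}_l-\mu_0\mathbf{M}_l=\Ocal(\omega)$, the quantity $\det\mathbf{P}_l$ stays bounded away from $0$ for all sufficiently small $\omega$, so $\mathbf{P}_l$ is nonsingular, which completes the argument. The only step beyond routine asymptotic bookkeeping is the definiteness of $\mathbf{M}_l$: this is where the spectral structure of the Neumann--Poincar\'e operator enters, and it is the genuine obstacle if one insists on a self-contained proof rather than citing the polarization-tensor literature.
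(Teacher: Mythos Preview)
Your proof is correct and follows essentially the same route as the paper: both show $\mathbf{P}_0=0$ under $\varepsilon_s=\varepsilon_0$, use $\gamma_l-\varepsilon_s\sim i\Gs_l/\omega$ and $\lambda_{\gamma_l}\to\tfrac12$ to conclude $\mathbf{D}_l=\Ocal(\omega)$, and then invoke the positive-definiteness of the P\'olya--Szeg\H{o} polarization tensor (citing \cite{ADKL14,HK07:book}) for the nonsingularity of $\mathbf{M}_l$. Your treatment of the degenerate limit $\Gl_\varepsilon\to\infty$ via the rewriting $\tfrac{\varepsilon_s}{\varepsilon_s-\varepsilon_0}(\Gl_\varepsilon I-(\Kcal_\Omega^0)^*)^{-1}=\tfrac{2\varepsilon_s}{\varepsilon_s+\varepsilon_0}(I-\Gl_\varepsilon^{-1}(\Kcal_\Omega^0)^*)^{-1}\to I$ is more explicit than the paper's one-line assertion, but the substance is identical.
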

\begin{proof}
Since $\varepsilon_s=\varepsilon_0$, one immediately has $\mathbf{P}_0=0$ from \eqnref{eq:leasymdefP0}. Recall that $\gamma_l=\varepsilon_l+i\Gs_l\omega^{-1}$.
Since $\varepsilon_s=\varepsilon_0$ and $\Gs\neq 0$, it is straightforward to see from the definition of $\mathbf{D}_l$ in \eqnref{eq:leasym02} that
$$
\mathbf{D}_l=-i\omega\Gs_l^{-1}\int_{\p \Omega}\tilde\By\Big(\frac{I}{2}+(\Kcal_{\Omega}^{0})^*\Big)^{-1}[\nu]ds_{\tilde\By}+\Ocal(\omega^2).
$$
Then one can obtain that
\beq\label{eq:lenonsig01}
\begin{split}
\mathbf{P}_l=\mu_0\mathbf{M}_l+\Ocal(\omega)=&\frac{\mu_0}{\mu_l-\mu_0}\int_{\p \Omega}\tilde\By(\lambda_{\mu_l} I-(\Kcal_{\Omega}^{0})^*)^{-1}[\nu_l]ds_{\tilde\By}+\Ocal(\omega).
\end{split}
\eeq
It is known that the polarization tensor $\mathbf{M}_l$ in \eqnref{eq:lenonsig01} is a positive definite matrix (see, e.g.,\cite{ADKL14,HK07:book}).

The proof is complete.
\end{proof}

\begin{lem}\label{le:ax000}
Suppose $\Omega$ is a ball. Let $\mathbf{P}_l$ be defined in \eqnref{eq:matM01}. If there holds
\beq\label{eq:leax0101}
\mu_l\varepsilon_s^2+2(\mu_0-\mu_l)\varepsilon_s\gamma_l+2(\mu_l+2\mu_0)\varepsilon_0\gamma_l\neq \mu_0\varepsilon_s^2
\eeq
then $\mathbf{P}_l$ is nonsingular.
\end{lem}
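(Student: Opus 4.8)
The plan is to use the rotational symmetry of a ball to show that $\mathbf{P}_l$ is a scalar multiple of the $3\times 3$ identity matrix $I$, to identify that scalar explicitly, and to recognise its non-vanishing as the condition \eqnref{eq:leax0101}. Since translating $\Omega$ and rescaling it by a factor $r>0$ multiplies each of the defining integrals \eqnref{eq:leasymdefP0}, \eqnref{eq:leasym02} and \eqnref{eq:leasym03} by the same positive number $r^3$ — the Neumann--Poincar\'e operator $(\Kcal_\Omega^0)^*$ and the unit normal being invariant, in the appropriate sense, under isometries and dilations, and the translation contribution dropping out since $\int_{\p\Omega}(\Gl I\pm(\Kcal_\Omega^0)^*)^{-1}[\nu_l]\,ds=(\Gl\pm\tfrac{1}{6})^{-1}\int_{\p\Omega}\nu_l\,ds=0$ — the (non)singularity of $\mathbf{P}_l$ is unaffected, and we may assume $\Omega=B$ is the unit ball centred at the origin, so that $\nu_l(\By)=\By$ on $\p B$ and each component of $\nu_l$ is a spherical harmonic of degree one.

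The second step records that $(\Kcal_B^0)^*$ acts on the degree-$n$ spherical harmonics as multiplication by $1/(2(2n+1))$, hence $(\Kcal_B^0)^*[\nu_l]=\tfrac{1}{6}\nu_l$. Thus, whenever $\Gl\pm\tfrac{1}{6}\neq 0$, $(\Gl I\pm(\Kcal_B^0)^*)^{-1}[\nu_l]=(\Gl\pm\tfrac{1}{6})^{-1}\nu_l$; and from \eqnref{eq:defmulam} one computes $\Gl_\varepsilon-\tfrac{1}{6}=\tfrac{\varepsilon_s+2\varepsilon_0}{3(\varepsilon_s-\varepsilon_0)}$, $\Gl_{\mu_l}-\tfrac{1}{6}=\tfrac{\mu_l+2\mu_0}{3(\mu_l-\mu_0)}$ and $\Gl_{\gamma_l}+\tfrac{1}{6}=\tfrac{2\gamma_l+\varepsilon_s}{3(\gamma_l-\varepsilon_s)}$, all nonzero since $\varepsilon_s,\varepsilon_0,\mu_l,\mu_0>0$ and $\Re\gamma_l>0$. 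Feeding these, together with $\int_{\p B}\tilde\By\,\nu_l^{\mathrm T}\,ds_{\tilde\By}=\tfrac{4\pi}{3}I$, into \eqnref{eq:leasymdefP0}, \eqnref{eq:leasym02} and \eqnref{eq:leasym03} turns $\mathbf{P}_0$, $\mathbf{D}_l$ and $\mathbf{M}_l$ — and therefore $\mathbf{P}_l=\mu_0\mathbf{M}_l-\varepsilon_0\mathbf{D}_l-\mathbf{P}_0$ — into scalar multiples of $I$, say $\mathbf{P}_l=p_l I$ with $p_l$ an explicit rational function of $\mu_l,\mu_0,\varepsilon_s,\varepsilon_0,\gamma_l$.

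Consequently $\mathbf{P}_l$ is nonsingular if and only if $p_l\neq 0$. The final step is to substitute the values above, clear the common denominator (a nonzero constant times $(\mu_l+2\mu_0)(2\gamma_l+\varepsilon_s)(\varepsilon_s+2\varepsilon_0)$) and collect terms, whereupon $p_l\neq 0$ reduces to \eqnref{eq:leax0101}. This bookkeeping is the only real work; everything else is forced by the symmetry of the ball. The one point needing care is the degenerate case $\varepsilon_s=\varepsilon_0$, where $\Gl_\varepsilon$ and the prefactor $\varepsilon_s/(\varepsilon_s-\varepsilon_0)$ appearing in $\mathbf{D}_l$ and $\mathbf{M}_l$ are individually singular but combine into the finite operator $\varepsilon_s\big(\tfrac{\varepsilon_s+\varepsilon_0}{2}I-(\varepsilon_s-\varepsilon_0)(\Kcal_B^0)^*\big)^{-1}$, which on $\nu_l$ tends to the identity; this is handled exactly as in the proof of Lemma~\ref{le:ax0000101}.
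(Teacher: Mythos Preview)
Your proposal is correct and follows essentially the same route as the paper: both proofs exploit the fact that on a ball $(\Kcal_\Omega^0)^*[\nu]=\tfrac{1}{6}\nu$, substitute this into the defining integrals \eqnref{eq:leasymdefP0}--\eqnref{eq:leasym03} to obtain $\mathbf{P}_l$ as an explicit scalar multiple of $|\Omega| I$, and then identify the non-vanishing of that scalar with condition \eqnref{eq:leax0101}. Your additional remarks on the reduction to the unit ball and on the limiting case $\varepsilon_s=\varepsilon_0$ are sound refinements but do not alter the underlying argument.
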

\begin{proof}
Since $\Omega$ is a ball, one has the following result (see, e.g. \cite{HCKL13,FDL15})
\beq
(\Kcal_{\Omega}^{0})^*[\nu]=\frac{1}{6} \nu.
\eeq
Then one can calculate explicitly that
\beq\label{eq:reviseadd0101}
\begin{split}
\mathbf{P}_l=&\frac{\mu_0}{\mu_l-\mu_0}\frac{\varepsilon_s}{\varepsilon_s-\varepsilon_0}\int_{\p \Omega}\tilde\By(\lambda_{\mu_l} -1/6)^{-1}(\lambda_{\varepsilon}-1/6)^{-1}[\nu_l]ds_{\tilde\By}\\
&-\frac{\varepsilon_0}{\gamma_l-\varepsilon_s}\frac{\varepsilon_s}{\varepsilon_s-\varepsilon_0}\int_{\p \Omega}\tilde\By(\lambda_{\gamma_l} +1/6)^{-1}(\lambda_{\varepsilon}-1/6)^{-1}[\nu_l]ds_{\tilde\By}-\int_{\p \Omega}\tilde\By(\lambda_{\varepsilon}-1/6)^{-1}[\nu_l]ds_{\tilde\By}\\
&=\frac{3((\mu_l-\mu_0)\varepsilon_s^2+2(\mu_0-\mu_l)\varepsilon_s\gamma_l+2(\mu_l+2\mu_0)\varepsilon_0\gamma_l)}{(\varepsilon_s+2\varepsilon_0)(\mu_l+2\mu_0)(2\gamma_l+\varepsilon_s)} |\Omega| I,
\end{split}
\eeq
which proves that $\mathbf{P}_k$ is nonsingular.
\end{proof}
We remark that $\Omega$ is not necessary to be a ball to ensure the nonsingularity of the matrix $\mathbf{P}_l$. Indeed, one can also explicitly calculate $\mathbf{P}_l$ if $\Omega$ is an ellipsoid and show that $\mathbf{P}_l$ is nonsingular if the parameters $\mu_l$ and $\gamma_l$ are not quite special. One can find from \eqnref{eq:reviseadd0101} that if $\varepsilon_s=\varepsilon_0$ and $\Gs\neq 0$ then $\mathbf{P}_l$ is nonsingular, which is indicated in Lemma \ref{le:ax0000101}. Starting from now on and throughout the rest of the paper, we always assume that $\mathbf{P}_l$, $l=1, 2, \ldots l_0$ are nonsingular.

\subsection{Spherical harmonics expansion}

{
In Theorem~\ref{le:maggra01}, we derived the necessary asymptotic expansion for our subsequent inverse problem study. Furthermore, at a certain point, we shall need the the expansion of the steady geomagnetic field on the surface of the Earth with respect to the spherical harmonic functions. To that end, we present the following lemma
\begin{lem}\label{le:04}
Let $\bZ\in B_{R_0}$ be fixed, where $B_{R_0}$ stands for a ball of radius $R_0\in\mathbb{R}_+$. Let $\Bx\in \p B_{R_1}$ and suppose $R_0<R_1$. There holds the following asymptotic expansion
\beq\label{eq:leasm02}
\nabla \Gamma(\Bx-\bZ)=\sum_{n=0}^{\infty}\sum_{m=-n}^{n}
\frac{(n+1)Y_n^m(\hat\Bx)\hat\Bx-\nabla_SY_n^m(\hat\Bx)}{(2n+1)R_1^{n+2}}\overline{Y_n^m(\hat{\bZ})}\|\bZ\|^{n},
\eeq
where
$\hat\bZ=\bZ/\|\bZ\|$ and $\hat\Bx=\Bx/\|\Bx\|$. $Y_n^m$ is the spherical harmonics of order $m$ and degree $n$.
\end{lem}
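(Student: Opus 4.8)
The plan is to read \eqref{eq:leasm02} off the classical multipole expansion of the Newtonian kernel, differentiated term by term in $\Bx$. Here $\Gamma=\Gamma_0=-1/(4\pi\|\cdot\|)$, as fixed after \eqref{Gk}. Since $\|\bZ\|\leq R_0<R_1=\|\Bx\|$, the generating-function identity for Legendre polynomials gives, for all such $\Bx,\bZ$, the locally uniformly convergent expansion $\|\Bx-\bZ\|^{-1}=\sum_{n\geq 0}\|\bZ\|^{n}\|\Bx\|^{-(n+1)}P_n(\hat\Bx\cdot\hat\bZ)$; inserting the spherical-harmonic addition theorem $P_n(\hat\Bx\cdot\hat\bZ)=\frac{4\pi}{2n+1}\sum_{m=-n}^{n}Y_n^m(\hat\Bx)\overline{Y_n^m(\hat\bZ)}$ (see, e.g., \cite{CK}) yields
\[
\Gamma_0(\Bx-\bZ)=-\sum_{n=0}^{\infty}\sum_{m=-n}^{n}\frac{\|\bZ\|^{n}}{(2n+1)\,\|\Bx\|^{n+1}}\,Y_n^m(\hat\Bx)\,\overline{Y_n^m(\hat\bZ)}.
\]

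Next I would differentiate this series in $\Bx$ termwise. For a single solid-harmonic summand, write $r=\|\Bx\|$ and use the splitting $\nabla=\hat\Bx\,\partial_r+r^{-1}\nabla_S$ of the gradient on $\RR^3\setminus\{0\}$, with $\nabla_S$ the surface gradient on $\mathbb{S}^2$; since $r^{-(n+1)}$ is angle-independent while $Y_n^m(\hat\Bx)$ is radius-independent, one gets
\[
\nabla_\Bx\!\left(\frac{Y_n^m(\hat\Bx)}{r^{\,n+1}}\right)=\frac{-(n+1)\,Y_n^m(\hat\Bx)\,\hat\Bx+\nabla_S Y_n^m(\hat\Bx)}{r^{\,n+2}}.
\]
Feeding this into the termwise $\Bx$-gradient of the displayed expansion for $\Gamma_0(\Bx-\bZ)$, the overall minus sign absorbs the factor $-(n+1)$, and setting $r=R_1$ produces \eqref{eq:leasm02} exactly. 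Everything else is bookkeeping.

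The one point that needs genuine care is the legitimacy of the termwise differentiation, i.e. the locally uniform convergence (in $\hat\Bx$, and in $\bZ$ over compact subsets of $B_{R_0}$) of the vector-valued series obtained after differentiating. Grouping the inner $m$-sum and applying Cauchy--Schwarz with the coincidence identity $\sum_{m=-n}^{n}|Y_n^m(\hat\Bx)|^2=\frac{2n+1}{4\pi}$ and the analogous polynomial-in-$n$ bound for $\sum_{m=-n}^{n}|\nabla_S Y_n^m(\hat\Bx)|^2$, each $n$-block is dominated by $C\,n^{2}(R_0/R_1)^{n}$, which is summable because $R_0<R_1$; this simultaneously justifies interchanging $\nabla_\Bx$ with the summation and shows that the right-hand side of \eqref{eq:leasm02} converges. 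I expect this convergence estimate to be the only real obstacle; the remaining manipulations are the elementary computation above.
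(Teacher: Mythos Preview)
Your proposal is correct and follows essentially the same route as the paper: start from the addition formula \eqref{eq:le04tmp02} for $1/(4\pi\|\Bx-\bZ\|)$, differentiate termwise using $\nabla=\hat\Bx\,\partial_r+r^{-1}\nabla_S$ to get \eqref{eq:le04tmp01}, and substitute. The only difference is that you add an explicit convergence argument to justify the termwise differentiation, which the paper omits.
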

\begin{proof}
Suppose that $\|\Bx\|>\|\By\|$, then there holds the following addition formula (cf. \cite{CK,Ned})
  \beq\label{eq:le04tmp02}
  \frac{1}{4\pi\|\Bx-\By\|}=\sum_{n=0}^\infty \sum_{m=-n}^{n} \frac{1}{2n+1}Y_n^m(\hat\Bx) \overline{Y_n^m(\hat\By)} \,\frac{\|\By\|^n}{\|\Bx\|^{n+1}}.
  \eeq
Since $\|\Bx\|=R_1$, by using the definition of surface gradient, one has
\beq\label{eq:le04tmp01}
\nabla \frac{Y_n^m(\hat\Bx)}{\|\Bx\|^{n+1}}=-((n+1)Y_n^m(\hat\Bx)\hat\Bx-\nabla_SY_n^m(\hat\Bx))R_1^{-(n+2)}.
\eeq
By substituting \eqnref{eq:le04tmp01} into the gradient of \eqnref{eq:le04tmp02}, one can obtain \eqnref{eq:leasm02}.

The proof is complete.
\end{proof}

By substituting \eqnref{eq:leasm02} into \eqnref{eq:lemag0101}, one can obtain the spherical harmonic expansion of the magnetic field.}

\section{Unique recovery results for magnetized anomalies}

We are in a position to present the main unique recovery results in identifying the magnetized anomalies. In what follows, we let $D_l^{(1)}$ and $D_l^{(2)}$, $l=1, 2, \ldots, l_0$, be two sets of magnetic anomalies, which satisfy \eqnref{eq:permeab02} with $\Bz_l$ replaced by $\Bz_l^{(1)}$ and $\Bz_l^{(2)}$, respectively. Correspondingly, the material parameters $\varepsilon_l$, $\Gs_l$, $\gamma_l$ and $\mu_l$ are replaced by $\varepsilon_l^{(1)}$, $\Gs_l^{(1)}$, $\gamma_l^{(1)}$, $\mu_l^{(1)}$ and $\varepsilon_l^{(2)}$, $\Gs_l^{(2)}$, $\gamma_l^{(2)}$, $\mu_l^{(2)}$, respectively, for $D_l^{(1)}$ and $D_l^{(2)}$, $l=1, 2, \ldots, l_0$. Let $(\bE_j,\bH_j)$, $j=1, 2$, be the solutions to \eqnref{eq:paradef02} and \eqnref{eq:pss02} with $D_l$ replaced by $D_l^{(1)}$ and $D_l^{(2)}$, respectively. Denote by $\mathbf{D}_l^{(1)}$, $\mathbf{D}_l^{(2)}$, $\mathbf{P}_l^{(1)}$ and $\mathbf{P}_l^{(2)}$ the polarization tensors for $D_l^{(1)}$ and $D_l^{(2)}$, respectively, $l=1, 2, \ldots, l_0$.

Let $\bH_1^{(0)}$ and $\bH_2^{(0)}$ be the leading terms of $\bH_1$ and $\bH_2$, respectively.
Then from \eqnref{eq:lemag0101}, there holds the following for $\Bx\in\RR^3\setminus\Sigma$,
\beq\label{eq:lemag010101}
\begin{split}
\bH_j^{(0)}(\Bx)=&
\bH_0^{(0)}(\Bx)+\delta^3\sum_{l=1}^{l_0}\Big(\nabla\big(\nabla\Gamma_0(\Bx-\Bz_l^{(j)})^T\mathbf{P}_l^{(j)}\bH_0^{(0)}(\Bz_l^{(j)})\big) +\Ocal(\delta^4), \quad j=1,2,
\end{split}
\eeq
\begin{lem}\label{le:ax01}
If there holds
\beq\label{eq:thmain01}
\nu\cdot\bH_1=\nu\cdot\bH_2\neq 0 \ \ \mbox{on} \ \ \Gamma,
\eeq
then one has
\beq\label{eq:leax02}
\sum_{m=-n}^n{\mathbf{N}_{n+1}^m(\hat\Bx)}^T\mathbf{d}_1^{n,m}=\sum_{m=-n}^n{\mathbf{N}_{n+1}^m(\hat\Bx)}^T\mathbf{d}_2^{n,m}, \quad \hat\Bx\in \mathbb{S}^2
\eeq
for any $n\in \mathbb{N}\cup\{0\}$, where
\beq\label{eq:leax03}
\mathbf{d}_j^{n,m}=\sum_{l=1}^{l_0}\overline{Y_n^m(\hat{\bZ}_l^{(j)})}\|\bZ_l^{(j)}\|^{n}(\mathbf{P}_l^{(j)})^T\bH_0^{(0)}(\bZ_l^{(j)}), \quad j=1,2,
\eeq
and
\beq\label{eq:leax04}
\mathbf{N}_{n+1}^{m}(\hat\Bx)=(n+1)Y_n^m(\hat\Bx)\hat\Bx-\nabla_SY_n^m(\hat\Bx).
\eeq
\end{lem}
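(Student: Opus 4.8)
The plan is to promote the scalar datum \eqref{eq:thmain01} to the full identity $\bH_1^{(0)}=\bH_2^{(0)}$ of the leading-order steady fields in $\RR^3\setminus\overline{\Sigma}$, and then to read off \eqref{eq:leax02} by expanding the explicit representation \eqref{eq:lemag010101} on a large sphere via Lemma~\ref{le:04}. To set up, observe that in $\RR^3\setminus\overline{\Sigma}$ both $\bH_1$ and $\bH_2$ solve the same source-free time-harmonic Maxwell system with the free-space parameters $\varepsilon_0,\mu_0$ and obey the Silver--M\"uller radiation condition, so their difference $\bH_1-\bH_2$ is a radiating, source-free Maxwell field there, hence real-analytic on the connected set $\RR^3\setminus\overline{\Sigma}$. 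By Lemma~\ref{le:stelec01} and the form \eqref{eq:lemag010101}, the steady perturbations are conservative: $\bH_j^{(0)}-\bH_0^{(0)}=\nabla\phi_j+\Ocal(\delta^4)$ with
\[
\phi_j(\Bx):=\delta^3\sum_{l=1}^{l_0}\nabla\Gamma_0(\Bx-\bZ_l^{(j)})^{T}\,\mathbf{P}_l^{(j)}\,\bH_0^{(0)}(\bZ_l^{(j)}),
\]
so that $\bH_1^{(0)}-\bH_2^{(0)}=\nabla(\phi_1-\phi_2)+\Ocal(\delta^4)$; each $\phi_j$ is harmonic away from the fixed, $\delta$-independent points $\bZ_l^{(j)}$ and decays like $\|\Bx\|^{-2}$. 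Fix balls with $\bigcup_{j,l}\{\bZ_l^{(j)}\}\subset B_{R_0}\Subset\Sigma$ and $\overline{\Sigma}\subset B_{R_1}$, so that $R_0<R_1$ and $\partial B_r\subset\RR^3\setminus\overline{\Sigma}$ for every $r\ge R_1$.

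The core of the argument is to pass from ``$\nu\cdot\bH_1=\nu\cdot\bH_2$ on the open surface $\Gamma$'' --- equivalently, at leading order, $\partial_\nu(\phi_1-\phi_2)=0$ on $\Gamma$ --- to ``$\bH_1^{(0)}=\bH_2^{(0)}$ throughout $\RR^3\setminus\overline{\Sigma}$''. I would run the same analyticity-plus-Rellich reduction already used in the Introduction to pass from \eqref{eq:geom2} to \eqref{eq:geom3}, now applied to the radiating homogeneous exterior field $\bH_1-\bH_2$: its real-analyticity allows the vanishing trace to be transferred from $\Gamma$ to the sphere $\partial B_{R_1}$, and, since in the steady regime the field $\nabla(\phi_1-\phi_2)$ is curl-free, its normal component on $\partial B_{R_1}$ is a determining datum. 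Concretely, once $\partial_\nu(\phi_1-\phi_2)=0$ on $\partial B_{R_1}$, uniqueness for the exterior Neumann problem of the Laplacian together with the decay of $\phi_1-\phi_2$ forces $\phi_1-\phi_2\equiv0$ in $\RR^3\setminus\overline{B_{R_1}}$ modulo $\Ocal(\delta^4)$, and then, $\phi_1-\phi_2$ being real-analytic in $\RR^3\setminus\overline{B_{R_0}}$, also there modulo $\Ocal(\delta^4)$.

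It remains to identify the coefficients. For $\Bx\in\partial B_r$ with $r\ge R_1$ and $\bZ\in B_{R_0}$, Lemma~\ref{le:04} together with \eqref{eq:leax04} gives $\nabla\Gamma_0(\Bx-\bZ)=\sum_{n,m}(2n+1)^{-1}\|\Bx\|^{-(n+2)}\,\mathbf{N}_{n+1}^{m}(\hat\Bx)\,\overline{Y_n^m(\hat{\bZ})}\,\|\bZ\|^{n}$. Substituting this into the definition of $\phi_j$, using that the polarization tensors $\mathbf{P}_l^{(j)}$ are symmetric (cf.\ \eqref{eq:reviseadd0101}), and regrouping by the degree $n$ gives
\[
\phi_j(\Bx)=\delta^3\sum_{n=0}^{\infty}\frac{1}{(2n+1)\,\|\Bx\|^{\,n+2}}\sum_{m=-n}^{n}\mathbf{N}_{n+1}^{m}(\hat\Bx)^{T}\mathbf{d}_j^{n,m},\qquad \|\Bx\|\ge R_1,
\]
with $\mathbf{d}_j^{n,m}$ exactly as in \eqref{eq:leax03}. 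Subtracting the $j=1$ and $j=2$ expansions and inserting $\phi_1-\phi_2=\Ocal(\delta^4)$ on every sphere $\partial B_r$, $r\ge R_1$, the linear independence of the functions $r\mapsto r^{-(n+2)}$ separates the series degree by degree; dividing by $\delta^3$ and retaining the leading order in $\delta$ (the $\mathbf{d}_j^{n,m}$ being $\delta$-independent) yields \eqref{eq:leax02} for every $n\in\mathbb{N}\cup\{0\}$. The non-degeneracy $\nu\cdot\bH_1=\nu\cdot\bH_2\neq0$ only excludes the trivial configuration and plays no further role.

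I expect the step in the second paragraph --- continuing a single scalar trace from the open surface $\Gamma$ onto a closed surface enclosing $\Sigma$, and thence into its exterior --- to be the main obstacle, since such a datum on an open surface does not in general determine a Maxwell field (nor even a harmonic function) in the bulk; it is here that the radiation condition, the real-analyticity of $\bH_1-\bH_2$, and the curl-free structure of the steady field $\bH^{(0)}$ all have to be used together. Everything downstream is bookkeeping with the multipole expansion of Lemma~\ref{le:04}.
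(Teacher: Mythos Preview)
Your proposal takes the same route as the paper: unique continuation from $\Gamma$ to the exterior, extraction of the harmonic dipole potentials $\phi_j$ (the paper's $u_j$) from \eqref{eq:lemag010101}, exterior uniqueness to get $\phi_1=\phi_2$ on large spheres, and degree-by-degree matching of the multipole expansion of Lemma~\ref{le:04} by varying the radius. The paper is terser---it asserts the continuation step in one line and silently drops the $\Ocal(\delta^4)$ remainder---but the argument is identical; your observation that the transpose in \eqref{eq:leax03} presupposes symmetry of $\mathbf{P}_l^{(j)}$ is a detail the paper leaves implicit.
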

\begin{proof}
First, by using \eqnref{eq:thmain01} and unique continuation, one sees that
$$\bH_1=\bH_2 \quad \mbox{in} \ \ \RR^3\setminus\overline{\Sigma},$$
Then from \eqnref{eq:lemag010101} one has
\beq\label{eq:leaddtmp0101}
\sum_{l=1}^{l_0}\big(\nabla^2\Gamma_0(\Bx-\Bz_l^{(1)})\mathbf{D}_l^{(1)}\bH_0^{(0)}(\Bz_l^{(1)})\big)=
\sum_{l=1}^{l_0}\big(\nabla^2\Gamma_0(\Bx-\Bz_l^{(2)})\mathbf{D}_l^{(2)}\bH_0^{(0)}(\Bz_l^{(2)})\big) \ \ \mbox{in} \ \ \RR^3\setminus\overline{\Sigma}.
\eeq
Suppose $R_1\in\mathbb{R}_+$ is sufficiently large such that $\Sigma\Subset B_{R_1}$. By \eqnref{eq:leaddtmp0101} one readily has
\beq\label{eq:leaxtmp01}
\begin{split}
\nu\cdot\nabla\sum_{l=1}^{l_0}\nabla\Gamma_0(\Bx-\Bz_l^{(1)})^T\mathbf{P}_l^{(1)}\bH_0^{(0)}(\Bz_l^{(1)})
=\nu\cdot\nabla\sum_{l=1}^{l_0}\nabla\Gamma_0(\Bx-\Bz_l^{(2)})^T\mathbf{P}_l^{(2)}\bH_0^{(0)}(\Bz_l^{(2)}), \quad \mbox{on} \, \p B_{R_1}.
\end{split}
\eeq
On the other hand, it can be verified that
$$u_j(\Bx):=\sum_{l=1}^{l_0}\nabla\Gamma_0(\Bx-\Bz_l^{(j)})^T\mathbf{P}_l^{(j)}\bH_0^{(0)}(\Bz_l^{(j)}), \quad j=1, 2$$
are harmonic functions in $\RR^3\setminus\overline{B_{R_1}}$, which decay at infinity. Using this together with \eqnref{eq:leaxtmp01}, and the maximum principle of harmonic functions, one can obtain that
$$
u_1(\Bx)=u_2(\Bx),\quad \Bx \in \RR^3\setminus\overline{B_{R_1}},
$$
and therefore
\beq\label{eq:leaxtmp02}
u_1(\Bx)=u_2(\Bx),\quad \Bx \in \p B_{R_1}.
\eeq
By substituting \eqnref{eq:leasm02} into \eqnref{eq:leaxtmp02} one has
\beq\label{eq:leaxtmp03}
\sum_{n=0}^{\infty}\sum_{m=-n}^{n}
\frac{{\mathbf{N}_{n+1}^m(\hat\Bx)}^T\mathbf{d}_1^{n,m}}{(2n+1)R_1^{n+2}}
=\sum_{n=0}^{\infty}\sum_{m=-n}^{n}
\frac{{\mathbf{N}_{n+1}^m(\hat\Bx)}^T\mathbf{d}_2^{n,m}}{(2n+1)R_1^{n+2}}.
\eeq
By taking $R_1$ sufficiently large and comparing the orders of $R_1^{-1}$ one has
\eqnref{eq:leax02}.

The proof is complete.
\end{proof}
\subsection{Uniqueness in recovering a single anomaly}

We present the uniqueness result in recovering a single anomaly.
\begin{thm}\label{th:01}
Suppose $l_0=1$ and $\Gs_1^{(1)}, \Gs_1^{(2)}\neq 0$. If there holds \eqnref{eq:thmain01}
then $\bZ_1^{(1)}=\bZ_1^{(2)}$ and $\mu_1^{(1)}=\mu_1^{(2)}$.
\end{thm}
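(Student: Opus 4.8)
The plan is to turn the measurement into an equality between two magnetostatic dipole potentials centred at $\bZ_1^{(1)}$ and $\bZ_1^{(2)}$, to read off the common position by matching singularities, and then to extract $\mu_1$ from the associated polarization tensor in the low-frequency limit. First I would invoke Lemma~\ref{le:ax01}: under the hypothesis \eqref{eq:thmain01}, unique continuation gives $\bH_1=\bH_2$ in $\RR^3\setminus\overline\Sigma$ and hence \eqref{eq:leax02} holds for every $n\in\mathbb N\cup\{0\}$. Specialising to $l_0=1$ and setting $\mathbf{a}_j:=(\mathbf P_1^{(j)})^T\bH_0^{(0)}(\bZ_1^{(j)})$, the coefficient vectors become $\mathbf d_j^{n,m}=\overline{Y_n^m(\hat{\bZ}_1^{(j)})}\,\|\bZ_1^{(j)}\|^{\,n}\,\mathbf{a}_j$, i.e.\ they depend on $(n,m)$ only through scalar prefactors. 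Here $\mathbf{a}_j\neq 0$, since $\mathbf P_1^{(j)}$ is nonsingular (our standing assumption, which holds e.g.\ by Lemma~\ref{le:ax0000101} because $\Gs_1^{(j)}\neq 0$), and the background field $\bH_0^{(0)}$ — which by Theorems~\ref{th:0101} and~\ref{le:maggra01} is the leading part of $\bH_j$ in $\RR^3\setminus\overline\Sigma$ — is, by \eqref{eq:thmain01}, not identically zero there, hence (being a harmonic gradient field, cf.\ Lemma~\ref{le:stelec01}) does not vanish at $\bZ_1^{(j)}$. Taking $n=0$ in \eqref{eq:leax02}, where $\nabla_SY_0^0=0$ and $Y_0^0$ is a nonzero constant, then yields $\hat\Bx\cdot\mathbf{a}_1=\hat\Bx\cdot\mathbf{a}_2$ for all $\hat\Bx\in\mathbb S^2$, so $\mathbf{a}_1=\mathbf{a}_2=:\mathbf{a}$.

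With $\mathbf{a}_1=\mathbf{a}_2$ in hand I would next multiply the $n$th identity in \eqref{eq:leax02} by $\big((2n+1)R_1^{\,n+2}\big)^{-1}$, for a fixed $R_1$ with $\Sigma\Subset B_{R_1}$, and sum over $n$; the addition formula of Lemma~\ref{le:04}, read from right to left, then reconstructs
\[
\nabla\Gamma_0(\Bx-\bZ_1^{(1)})^T\mathbf{a}=\nabla\Gamma_0(\Bx-\bZ_1^{(2)})^T\mathbf{a}\qquad\text{on }\p B_{R_1}.
\]
Both sides are harmonic and decay at infinity, so they coincide in $\RR^3\setminus\overline{B_{R_1}}$ and therefore, by real-analyticity, in $\RR^3\setminus\{\bZ_1^{(1)},\bZ_1^{(2)}\}$. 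If $\bZ_1^{(1)}\neq\bZ_1^{(2)}$, the left-hand side has a genuine dipole singularity at $\bZ_1^{(1)}$ (because $\mathbf{a}\neq 0$) while the right-hand side is harmonic there — a contradiction. Hence $\bZ_1^{(1)}=\bZ_1^{(2)}=:\bZ$, and then $\nabla\Gamma_0(\Bx-\bZ)^T(\mathbf{a}_1-\mathbf{a}_2)\equiv0$ outside $B_{R_1}$ re-confirms $\mathbf{a}_1=\mathbf{a}_2$.

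It remains to recover $\mu_1$. The identity $\mathbf{a}_1=\mathbf{a}_2$ now reads $\mathbf P_1^{(1)}\bH_0^{(0)}(\bZ)=\mathbf P_1^{(2)}\bH_0^{(0)}(\bZ)$, and this holds for every admissible small $\omega>0$. Since $\Gs_1^{(j)}\neq 0$, one has $\gamma_1^{(j)}=\varepsilon_1^{(j)}+\mathrm{i}\,\Gs_1^{(j)}/\omega\to\infty$ as $\omega\to 0^+$; in \eqref{eq:matM01} the contribution $\mathbf D_1^{(j)}$ then carries the vanishing prefactor $1/(\gamma_1^{(j)}-\varepsilon_s)=\Ocal(\omega)$ while $\mathbf P_0$ is independent of the anomaly, so $\mathbf P_1^{(j)}=\mu_0\mathbf M_1(\mu_1^{(j)})-\mathbf P_0+\Ocal(\omega)$, with $\mathbf M_1(\cdot)$ the magnetic polarization tensor of $\Omega$ depending on the anomaly only through $\mu_1^{(j)}$. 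Letting $\omega\to 0^+$ gives $\big(\mathbf M_1(\mu_1^{(1)})-\mathbf M_1(\mu_1^{(2)})\big)\bH_0^{(0)}(\bZ)=0$. When $\Omega$ is a ball this becomes explicit via \eqref{eq:reviseadd0101}: $\mathbf P_1^{(j)}=c(\mu_1^{(j)},\gamma_1^{(j)})|\Omega|\,I$ forces $c(\mu_1^{(1)},\gamma_1^{(1)})=c(\mu_1^{(2)},\gamma_1^{(2)})$ for all small $\omega$, and passing to the limit the two Möbius values $\frac{3(\mu_0(\varepsilon_s+2\varepsilon_0)-\mu_1^{(j)}(\varepsilon_s-\varepsilon_0))}{(\varepsilon_s+2\varepsilon_0)(\mu_1^{(j)}+2\mu_0)}$, $j=1,2$, agree; this map of $\mu$ has nonzero associated determinant $-9\mu_0\varepsilon_s(\varepsilon_s+2\varepsilon_0)$, hence is injective, so $\mu_1^{(1)}=\mu_1^{(2)}$. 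For a general $\Omega$ the same conclusion follows from the strict monotonicity of $\mu\mapsto\mathbf M_1(\mu)$, which makes $\mathbf M_1(\mu_1^{(1)})-\mathbf M_1(\mu_1^{(2)})$ a definite — hence nonsingular — matrix whenever $\mu_1^{(1)}\neq\mu_1^{(2)}$, so it cannot annihilate the nonzero vector $\bH_0^{(0)}(\bZ)$.

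The reduction via Lemma~\ref{le:ax01} and the singularity-matching that pins down $\bZ$ are the routine parts. The main obstacle is the last step: a single anomaly contributes, to leading order, only the one vector $\mathbf P_1^{(j)}\bH_0^{(0)}(\bZ)$, so recovering the scalar $\mu_1$ rests on (i) the condition $\Gs_1^{(j)}\neq 0$, which collapses the frequency-limit of the polarization tensor to a quantity depending on the anomaly's parameters only through $\mu_1^{(j)}$, and (ii) the injectivity (explicit and Möbius for a ball, monotone in general) of that residual dependence, together with the fact — guaranteed by \eqref{eq:thmain01} — that the probing direction $\bH_0^{(0)}(\bZ)$ is nonzero.
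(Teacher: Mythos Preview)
Your recovery of the position is correct and proceeds differently from the paper. The paper stays with the discrete identities of Lemma~\ref{le:ax01}: $n=0$ gives the common vector $\mathbf c:=\mathbf P_1^{(1)}\bH_0^{(0)}(\bZ_1^{(1)})=\mathbf P_1^{(2)}\bH_0^{(0)}(\bZ_1^{(2)})$, and for $n=1$ it assembles an explicit $3\times 3$ matrix $\mathbf Q(\hat\Bx)$ from $Y_1^{-1},Y_1^0,Y_1^1$ and their surface gradients, checks by direct computation that $\mathbf Q(\hat\Bx)$ is nonsingular for every $\hat\Bx\in\mathbb S^2$, and concludes $\|\bZ_1^{(1)}\|\overline{\mathbf Y_1(\hat\bZ_1^{(1)})}=\|\bZ_1^{(2)}\|\overline{\mathbf Y_1(\hat\bZ_1^{(2)})}$. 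Your re-summation to a dipole potential followed by singularity matching is cleaner and avoids this matrix computation; both are valid.

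For $\mu_1$ the two arguments diverge substantially, and your last step has a gap. The paper does \emph{not} remain at the $\delta^3$ dipole level. Once $D_1^{(1)}=D_1^{(2)}=:D_1$, it goes back to the first-level (in $\omega$, not in $\delta$) representation \eqref{eq:th0101} of $\bH_j$ in $\tilde D$; since $\Gs_1^{(j)}\neq 0$ kills the $\gamma$-terms to leading order, this reads
\[
\bH_j=\hat\bH_0^{(0)}-\frac{\mu_0}{\mu_1^{(j)}-\mu_0}\nabla\Scal_{D_1}^{0}\big(\lambda_{\mu_1^{(j)}} I-(\Kcal_{D_1}^0)^*\big)^{-1}[\nu_1\cdot\hat\bH_0^{(0)}]+\Ocal(\omega)\quad\text{in }\tilde D.
\]
Equating on $\partial D_1$ and using the jump relations together with the invertibility of $\tfrac{I}{2}\pm(\Kcal_{D_1}^0)^*$ on $L^2(\partial D_1)$ (resp.\ $L_0^2(\partial D_1)$) yields
\[
(\mu_1^{(1)}-\mu_1^{(2)})\Big(\tfrac{I}{2}-(\Kcal_{D_1}^0)^*\Big)\big(\lambda_{\mu_1^{(1)}} I-(\Kcal_{D_1}^0)^*\big)^{-1}[\nu_1\cdot\hat\bH_0^{(0)}]=0,
\]
and since $\nu_1\cdot\hat\bH_0^{(0)}$ is a nonzero element of $L_0^2(\partial D_1)$ one gets $\mu_1^{(1)}=\mu_1^{(2)}$. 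This uses the full boundary trace on $\partial D_1$, not merely one dipole moment.

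Your alternative --- extract $\mu_1$ from the single vector $\mathbf P_1\bH_0^{(0)}(\bZ)$, pass to $\omega\to 0^+$ to drop $\mathbf D_1$, and appeal to strict monotonicity of $\mu\mapsto\mathbf M_1(\mu)$ --- is attractive, but the monotonicity you invoke is not the standard one. The tensor $\mathbf M_l$ of \eqref{eq:leasym03} carries the extra factor $(\lambda_\varepsilon I-(\Kcal_\Omega^0)^*)^{-1}$ coming from the conversion between $\bH_0$ and $\hat\bH_0$ (Lemma~\ref{le:eleresult010103}); it is \emph{not} the classical polarization tensor, so the Ammari--Kang monotonicity does not apply verbatim, and neither does definiteness of $\mathbf M_1(\mu_1^{(1)})-\mathbf M_1(\mu_1^{(2)})$ follow. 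Your explicit M\"obius computation handles the ball, but for a general $\Omega$ you would need an additional argument (for instance, use the resolvent identity to write $\mathbf M_l(\mu)$ as a scalar multiple of the difference of two standard polarization tensors at $\lambda_\mu$ and $\lambda_\varepsilon$, and then control the combined dependence on $\mu$). The paper's boundary-trace argument avoids this issue entirely.
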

\begin{proof}
By the unique continuation principle, one has from \eqnref{eq:thmain01} that $\bH_1=\bH_2$ in $\RR^3\setminus\overline{D_1^{(1)}\cup D_1^{(2)}}$. First, we note that $Y_0^0=\frac{1}{2\sqrt{\pi}}$, and then by letting $n=0$ and using \eqnref{eq:leax03} and \eqnref{eq:leax04}, $l_0=1$, we have
\beq\label{eq:mainthtmp01}
\mathbf{d}_j^{0,0}=\frac{1}{2\sqrt{\pi}}\mathbf{M}_1^{(j)}\bH_0^{(0)}(\bZ_1^{(j)}) \quad \mbox{and} \quad \mathbf{N}_{1}^{0}(\hat\Bx)=\frac{1}{2\sqrt{\pi}}\hat\Bx, \quad j=1,2.
\eeq
Hence by using \eqnref{eq:leax02} there holds
\beq\label{eq:mainthtmp02}
\hat\Bx^T\mathbf{P}_1^{(1)}\bH_0^{(0)}(\bZ_1^{(1)})=\hat\Bx^T\mathbf{P}_1^{(2)}\bH_0^{(0)}(\bZ_1^{(2)}), \quad \forall\hat\Bx\in \mathbb{S}^2,
\eeq
which readily implies that
\beq\label{eq:mainthtmp03}
\mathbf{P}_1^{(1)}\bH_0^{(0)}(\bZ_1^{(1)})=\mathbf{P}_1^{(2)}\bH_0^{(0)}(\bZ_1^{(2)}).
\eeq
In the following, we set $\mathbf{c}:=\mathbf{P}_1^{(1)}\bH_0^{(0)}(\bZ_1^{(1)})$. We claim that
$$\bH_0^{(0)}(\bZ_1^{(1)})\neq 0.$$ From \eqnref{eq:le0101} one has that $\bH_0^{(0)}=\nabla u_0$, where $u_0$ is a non-constant harmonic function. Hence, by the maximum principle of harmonic functions, $\nabla u_0(\Bz_1^{(1)})\neq 0$. This together with the assumption that $\mathbf{P}_1^{(1)}$ is a nonsingular matrix, one has $\mathbf{c}\neq 0$. Now we set $n=1$, and define
\beq\label{eq:mainthtmp04}
\mathbf{Y}_1(\hat\Bx):=(Y_1^{-1}(\hat\Bx), Y_1^0(\hat\Bx), Y_1^1(\hat\Bx))^T.
\eeq
Let $(\hat\Bx,\mathbf{e}_\phi,\mathbf{e}_\theta)$ be the triplet of the local orthogonal unit vectors, where $\phi$ and $\theta$ depend on $\hat\Bx$.
Note that
$$
\nabla_SY_n^m=\frac{1}{\sin\theta}\frac{\p Y_n^m(\hat\Bx)}{\p \phi}\mathbf{e}_\phi+\frac{\p Y_n^m(\hat\Bx)}{\p \theta}\mathbf{e}_\theta.
$$
By using \eqnref{eq:leax02} again there holds
\beq\label{eq:mainthtmp05}
\begin{split}
&\|\bZ_1^{(1)}\|\overline{\mathbf{Y}_1(\hat\bZ_1^{(1)})}^T\mathbf{Q}(\hat\Bx)(\hat\Bx, \mathbf{e}_\phi, \mathbf{e}_\theta)^T\mathbf{c} \\
=&\|\bZ_1^{(2)}\|\overline{\mathbf{Y}_1(\hat\bZ_1^{(2)})}^T\mathbf{Q}(\hat\Bx)(\hat\Bx, \mathbf{e}_\phi, \mathbf{e}_\theta)^T\mathbf{c},
\quad \hat\Bx\in \mathbb{S}^2,
\end{split}
\eeq
where $\mathbf{Q}(\hat\Bx)$ is a matrix defined by
\beq\label{eq:mainthtmp06}
\mathbf{Q}(\hat\Bx):=\left[
2\mathbf{Y}_1(\hat\Bx), -\frac{1}{\sin\theta}\frac{\p \mathbf{Y}_1(\hat\Bx)}{\p \phi}, -\frac{\p \mathbf{Y}_1(\hat\Bx)}{\p \theta}
\right].
\eeq
Straightforward calculations show that $\mathbf{Q}(\hat\Bx)$ is  nonsingular for any $\hat\Bx\in \mathbb{S}^2$. Since $\hat\Bx\in \mathbb{S}^2$ is arbitrarily given, \eqnref{eq:mainthtmp05} implies that
\beq\label{eq:mainthtmp07}
\|\bZ_1\|\overline{\mathbf{Y}_1(\hat\bZ_1)}^T
=\|\bZ_2\|\overline{\mathbf{Y}_1(\hat\bZ_2)}^T.
\eeq
Then by direct calculations, one has $\|\bZ_1\|=\|\bZ_2\|$ and $\hat\bZ_1=\hat\bZ_2$. We thus have
$D_1^{(1)}=D_1^{(2)}$. Hence, in the sequel, we let $D_1:=D_1^{(1)}=D_1^{(2)}$.
Clearly, $\bH_1=\bH_2$ in $\RR^3\setminus\overline{D_1}$.
Since $\Gs_1^{(1)},\Gs_1^{(2)}\neq 0$, from \eqnref{eq:th0101}, one can find that
\beq\label{eq:addre0101}
\begin{split}
\bH_j=\hat\bH_0^{(0)}-\frac{\mu_0}{\mu_1^{(j)}-\mu_0}\nabla\Scal_{D_1}^{0}(\Gl_{\mu_1^{(j)}} I-(\Kcal_{D_1}^0)^*)^{-1}[\nu_1\cdot\hat\bH_0^{(0)}]+\Ocal(\omega)
\quad \mbox{in} \quad \tilde{D},
\end{split}
\eeq
By using the jump formula one further has that
\beq\label{eq:newrevise010102}
\begin{split}
&\frac{1}{\mu_1^{(1)}-\mu_0}\Big(\frac{I}{2}+(\Kcal_{D_1}^0)^*\Big)(\Gl_{\mu_1^{(1)}} I-(\Kcal_{D_1}^0)^*)^{-1}[\nu_1\cdot\hat\bH_0^{(0)}]|_-\\
=&\frac{1}{\mu_1^{(2)}-\mu_0}\Big(\frac{I}{2}+(\Kcal_{D_1}^0)^*\Big)(\Gl_{\mu_1^{(2)}} I-(\Kcal_{D_1}^0)^*)^{-1}[\nu_1\cdot\hat\bH_0^{(0)}]|_- \quad \mbox{on} \quad \p D_1.
\end{split}
\eeq
Using the fact that $\frac{I}{2}+(\Kcal_{D_1}^0)^*$ is invertible on $L^2(\p D_1)$ and
some elementary calculations, one has from \eqnref{eq:newrevise010102} that
\beq\label{eq:newrevise010103}
(\mu_1^{(1)}-\mu_1^{(2)})\Big(\frac{I}{2}-(\Kcal_{D_1}^0)^*\Big)(\Gl_{\mu_1^{(1)}} I-(\Kcal_{D_1}^0)^*)^{-1}[\nu_1\cdot\hat\bH_0^{(0)}]=0 \quad \mbox{on} \quad \p D_1.
\eeq
Note that $\nu\cdot\hat\bH_0^{(0)}\in L_0^2(\p D_1)$, where $L_0^2(\p D_1)$ is a subset of $L^2(\p D_1)$ with zero average on $\p D_1$. Since $\nu_1\cdot\hat\bH_0^{(0)}\neq 0$ (otherwise by the unique continuation of harmonic functions one has $\bH_0^{(0)}=0$ in $\RR^3\setminus\overline\Sigma$, and this cannot be true), one has from \eqnref{eq:leH1repre01new} that $\nu_1\cdot\hat\bH_0^{(0)}\neq 0$  on $\p D_1$. Using this and the fact that $-\frac{I}{2}+(\Kcal_{D_1}^0)^*$ is invertible on $L_0^2(\p D_1)$, we finally have from \eqnref{eq:newrevise010103} that $\mu_1^{(1)}=\mu_1^{(2)}$.

The proof is complete.
\end{proof}

\subsection{Uniqueness in recovering multiple anomalies}
\begin{thm}\label{th:02}
If there holds \eqnref{eq:thmain01}
then $\bZ_l^{(1)}=\bZ_l^{(2)}$, $l=1, 2, \ldots, l_0$.
\end{thm}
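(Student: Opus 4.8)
The plan is to turn the measurement identity \eqnref{eq:thmain01} into a global identity between two finite sums of point-dipole potentials on $\RR^3$, and then to recover the centres $\bZ_l^{(j)}$ as the singular set of that identity. First I would use the reasoning already contained in Lemma \ref{le:ax01}: from \eqnref{eq:thmain01} and unique continuation one gets $\bH_1=\bH_2$, hence $\bH_1^{(0)}=\bH_2^{(0)}$, in $\RR^3\setminus\overline{\Sigma}$; comparing the coefficients of $\delta^3$ in \eqnref{eq:lemag010101} then shows that the two functions
\[
u_j(\Bx):=\sum_{l=1}^{l_0}\nabla\Gamma_0(\Bx-\bZ_l^{(j)})^T\mathbf{P}_l^{(j)}\bH_0^{(0)}(\bZ_l^{(j)})=\sum_{l=1}^{l_0}\mathbf{p}_l^{(j)}\cdot\nabla_\Bx\Gamma_0(\Bx-\bZ_l^{(j)}),\qquad \mathbf{p}_l^{(j)}:=\mathbf{P}_l^{(j)}\bH_0^{(0)}(\bZ_l^{(j)}),
\]
agree outside a large ball $B_{R_1}$ with $\Sigma\Subset B_{R_1}$; this assertion is essentially \eqnref{eq:leaxtmp02}. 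Each $u_j$ is harmonic, hence real analytic, in $\RR^3$ away from the finite set $S_j:=\{\bZ_1^{(j)},\ldots,\bZ_{l_0}^{(j)}\}$; since $\RR^3\setminus(S_1\cup S_2)$ is connected and $u_1-u_2$ vanishes on the nonempty open set $\RR^3\setminus\overline{B_{R_1}}$, unique continuation upgrades this to $u_1\equiv u_2$ on all of $\RR^3\setminus(S_1\cup S_2)$.

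The heart of the argument is then to read off $S_1=S_2$ from the singularities of $u_1$ and $u_2$. I would first note that every effective dipole moment is nonzero: by Lemma \ref{le:stelec01}, $\bH_0^{(0)}=\nabla u_0$ with $u_0$ a non-constant function that is harmonic in a neighbourhood of each $\bZ_l^{(j)}$, so $\bH_0^{(0)}(\bZ_l^{(j)})\neq 0$ (exactly as in the corresponding step of the proof of Theorem \ref{th:01}), and $\mathbf{P}_l^{(j)}$ is nonsingular by our standing assumption, whence $\mathbf{p}_l^{(j)}\neq 0$; moreover the separation hypothesis imposed after \eqnref{eq:permeab02} makes the points within $S_1$ (and within $S_2$) pairwise distinct. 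I would then argue by contradiction. If some $\bZ_k^{(1)}\in S_1\setminus S_2$, then near $\bZ_k^{(1)}$ the function $u_2$ is harmonic, hence bounded, while $u_1-\mathbf{p}_k^{(1)}\cdot\nabla_\Bx\Gamma_0(\cdot-\bZ_k^{(1)})=\sum_{l\neq k}\mathbf{p}_l^{(1)}\cdot\nabla_\Bx\Gamma_0(\cdot-\bZ_l^{(1)})$ is likewise harmonic and bounded there, all remaining centres $\bZ_l^{(1)}$ ($l\neq k$) being distinct from $\bZ_k^{(1)}$. Hence $u_1\equiv u_2$ would force
\[
\mathbf{p}_k^{(1)}\cdot\nabla_\Bx\Gamma_0(\Bx-\bZ_k^{(1)})=\frac{\mathbf{p}_k^{(1)}\cdot(\Bx-\bZ_k^{(1)})}{4\pi\|\Bx-\bZ_k^{(1)}\|^3}
\]
to be bounded near $\bZ_k^{(1)}$, which is false for $\mathbf{p}_k^{(1)}\neq 0$ (as $\Bx\to\bZ_k^{(1)}$ along the direction of $\mathbf{p}_k^{(1)}$ it blows up like $|\mathbf{p}_k^{(1)}|/(4\pi\|\Bx-\bZ_k^{(1)}\|^{2})$). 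Therefore $S_1\subseteq S_2$, and by the symmetric argument $S_1=S_2$; since $|S_1|=|S_2|=l_0$, relabelling yields $\bZ_l^{(1)}=\bZ_l^{(2)}$ for $l=1,\ldots,l_0$.

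I expect the main obstacle to be exactly this second step: ensuring that every genuine anomaly centre survives as an \emph{un-cancellable} singularity of $u_j$. This requires both the nonsingularity of the polarization tensors $\mathbf{P}_l$ (our standing assumption; see Lemmas \ref{le:ax0000101} and \ref{le:ax000}) and the nonvanishing of the background steady field $\bH_0^{(0)}$ at the anomaly centres, which follows from $\bH_0^{(0)}$ being the gradient of a non-constant harmonic function; it also uses the sparse-separation hypothesis so that distinct centres do not coalesce into one singular point. An essentially equivalent alternative would be to start from the spherical-harmonic coefficient identity \eqnref{eq:leax02}--\eqnref{eq:leax04} and proceed by induction on $n$ in the spirit of Theorem \ref{th:01}; however, the function-theoretic singularity argument sketched above deals with the several-anomaly bookkeeping more transparently.
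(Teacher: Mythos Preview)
Your proposal is correct and shares the same opening move with the paper --- using Lemma~\ref{le:ax01} and analytic continuation to upgrade the boundary identity to $u_1\equiv u_2$ on all of $\RR^3$ minus finitely many points --- but diverges from the paper in how the centres are extracted. The paper, following Theorem~7.8 in \cite{HK07:book}, pairs each $\bZ_l^{(1)}$ with $\bZ_l^{(2)}$ a priori, applies the mean value theorem to write $\nabla\Gamma_0(\Bx-\bZ_l^{(1)})-\nabla\Gamma_0(\Bx-\bZ_l^{(2)})=\nabla^2\Gamma_0(\Bx-\bZ_l')(\bZ_l^{(1)}-\bZ_l^{(2)})$, splits the resulting expression into a $\nabla^2\Gamma_0$--part and a $\nabla\Gamma_0$--part, and then separates them by pole order to force $\bZ_l^{(1)}-\bZ_l^{(2)}=0$. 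Your argument instead treats $S_1$ and $S_2$ purely as singular sets and shows directly that any centre in $S_1\setminus S_2$ would produce an unbounded dipole term on one side of an identity whose other side is locally bounded. Your route avoids the mean-value-theorem rewriting and the implicit index pairing; the paper's route, on the other hand, gives for free the additional information $\mathbf{P}_l^{(1)}\bH_0^{(0)}(\bZ_l^{(1)})=\mathbf{P}_l^{(2)}\bH_0^{(0)}(\bZ_l^{(2)})$ from the vanishing of $F_2$. Both arguments rest on the same two substantive hypotheses: nonsingularity of $\mathbf{P}_l$ and nonvanishing of $\bH_0^{(0)}$ at the centres (the latter is exactly the step justified via Lemma~\ref{le:stelec01} in the proof of Theorem~\ref{th:01}, and the paper invokes it in the same way for Theorem~\ref{th:02}).
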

\begin{proof}
With our earlier preparations, the proof follows from a similar argument to that of Theorem 7.8 in \cite{HK07:book}. In the following, we only sketch it. Using the formula \eqnref{eq:leaxtmp01} and similar analysis in the proof of Lemma \ref{le:ax01}, one can show that
\beq
\sum_{l=1}^{l_0}\Big(\nabla\Gamma_0(\Bx-\Bz_l^{(1)})^T\mathbf{P}_l^{(1)}\bH_0^{(0)}(\Bz_l^{(1)})
-\nabla\Gamma_0(\Bx-\Bz_l^{(2)})^T\mathbf{P}_l^{(2)}\bH_0^{(0)}(\Bz_l^{(2)})\Big)=0,
\eeq
holds in $\RR^3\setminus\overline{\Sigma}$. By straightforward calculations, one can further show that
\beq\label{eq:reviseadd01}
\begin{split}
F(\Bx):=&\sum_{l=1}^{l_0}\Big((\nabla\Gamma_0(\Bx-\Bz_l^{(1)})-\nabla\Gamma_0(\Bx-\Bz_l^{(2)}))^T\mathbf{P}_l^{(1)}\bH_0^{(0)}(\Bz_l^{(1)})\\
&-\nabla\Gamma_0(\Bx-\Bz_l^{(2)})^T(\mathbf{P}_l^{(2)}\bH_0^{(0)}(\Bz_l^{(2)})-\mathbf{P}_l^{(1)}\bH_0^{(0)}(\Bz_l^{(1)}))\Big)\\
=&\sum_{l=1}^{l_0}\Big(\big(\nabla^2\Gamma_0(\Bx-\Bz_l')(\Bz_l^{(1)}-\Bz_l^{(2)})\big)^T\mathbf{P}_l\bH_0^{(0)}(\Bz_l^{(1)})\\
&-\nabla\Gamma_0(\Bx-\Bz_l^{(2)})^T(\mathbf{P}_l^{(2)}\bH_0^{(0)}(\Bz_l^{(2)})-\mathbf{P}_l^{(1)}\bH_0^{(0)}(\Bz_l^{(1)}))\Big)=0
\end{split}
\eeq
holds in $\RR^3\setminus\overline{\Sigma}$, where $\Bz_l'=\Bz_l^{(1)}+t'\Bz_l^{(2)}$ with $t'\in (0, 1)$. Note that $F(\Bx)$ defined in \eqnref{eq:reviseadd01} is also harmonic in $\RR^3\setminus\bigcup_{l=1}^{l_0}(\Bz_l^{(1)}\cup \Bz_l^{(2)})$. By using the analytic continuation of harmonic functions, one thus has that $F(\Bx)\equiv 0$ in $\RR^3$. Define $F:=F_1+F_2$, where
$$
F_1(\Bx):=\sum_{l=1}^{l_0}\big(\nabla^2\Gamma_0(\Bx-\Bz_l')(\Bz_l^{(1)}-\Bz_l^{(2)})\big)^T\mathbf{P}_l^{(1)}\bH_0^{(0)}(\Bz_l^{(1)}),
$$
and
$$
F_2(\Bx):=-\nabla\Gamma_0(\Bx-\Bz_l^{(2)})^T(\mathbf{P}_l^{(2)}\bH_0^{(0)}(\Bz_l^{(2)})-\mathbf{P}_l^{(1)}\bH_0^{(0)}(\Bz_l^{(1)})).
$$
Then by comparing the types of poles of $F_1$ and $F_2$, one immediately finds that $F_1=0$ and $F_2=0$ in $\RR^3$.
Since $\bH_0^{(0)}(\Bx)$ does not vanish for $\Bx\in \RR^3\setminus\overline{\Sigma}$, then one has $\mathbf{P}_l^{(1)}\bH_0^{(0)}(\Bz_l^{(1)})\neq 0$. Hence we have
$$
\Bz_l^{(1)}-\Bz_l^{(2)}=0, \quad l=1, 2, \ldots, l_0.
$$

The proof is complete.
\end{proof}

\begin{rem}
We remark that for the recovery of multiple anomalies, we can only prove the uniqueness in identifying the positions of the anomalies. In principle, our arguments developed in this work can also be used to show the identification of the magnetic permeability of the anomalies as well, similar to the single anomaly case (cf. Theorem~\ref{th:01}). However, it would involve much more complicated analysis and we leave it for our future study.
\end{rem}

\section{Concluding remark}\label{sect:5}

In this paper, we develop a mathematical theory for the applied technology of identifying magnetized anomalies using geomagnetic monitoring. We provide the mathematical modelling as a type of nonlinear inverse problem and establish the global uniqueness in recovering the locations of multiple magnetized anomalies. For the case with a single anomaly, we show that one can also identify the magnetic permeability of the anomaly. Our mathematical arguments rely on the asymptotic analysis of the geomagnetic fields with respect to the wave frequency and the size of the anomalies. We mainly make use of the steady part in the difference of the geomagnetic fields monitored before and after the presence of the magnetized anomalies. One can expect that the technical condition in \eqref{eq:geom1}, requiring that the geomagnetic field should be monitored for all the time, can be relaxed to a finite time interval. In fact, in a forthcoming article, we not only develop an efficient numerical reconstruction scheme for the geomagnetic monitoring problem based on the theory in the current article, but also numerically verify that the monitoring can indeed be conducted within a finite time interval. Our study also opens up intriguing mathematical topics for further developments, including the identification of moving anomalies using the geomagnetic monitoring and the investigation of geomagnetic monitoring for different planets other than the Earth, such as the Sun.

\section*{Acknowledgment}
The work of Y. Deng was supported by NSF grant of China No. 11601528, NSF grant of Hunan No. 2017JJ3432 and No. 2018JJ3622, Innovation-Driven Project of Central South University, No. 2018CX041, Mathematics and Interdisciplinary Sciences Project of Central South University. The work of H. Liu was supported by the FRG and startup grants from Hong Kong Baptist University, Hong Kong RGC General Research Funds, 12302415 and 12302017.

\appendix
\section{Uniqueness of solution}\label{app:01}
We prove the uniqueness of a trivial solution to \eqnref{eq:leunique07}. Let $\bE$ be the solution to \eqnref{eq:leunique07}. Since
$$\nabla\times \bE=0, (\RR^3\setminus\overline{\Sigma})\cup \Sigma_s,$$
and noting that $\RR^3\setminus\overline\Sigma$ and $\Sigma_s$ are simply connected domains, one can find $u_1\in H_{\rm{loc}}^1(\RR^3\setminus\overline\Sigma)$ and $u_2\in H^1(\Sigma_s)$, such that
\beq\label{eq:app01}
\bE=\left\{
\begin{array}{ll}
\nabla u_1  &\ \mbox{in} \ \ \RR^3\setminus\overline\Sigma, \\
\nabla u_2 &\ \mbox{in} \ \ \Sigma_s.
\end{array}
\right.
\eeq
Furthermore, $\nabla\cdot\bE=0$ implies that $\Delta u_1=0$ and $\Delta u_2=0$. This together with the fact that $\bE=\Ocal(\|\Bx\|^{-2})$ as $\Bx\rightarrow \infty$, and the Helmholtz decomposition, readily implies that $u_1=\Ocal(\|\Bx\|^{-1})$. Then by using the transmission and boundary conditions in \eqnref{eq:leunique07}, and integration by parts, we have
\beq\label{eq:app02}
\begin{split}
&\int_{\RR^3\setminus \overline\Sigma}\varepsilon_0|\nabla u_1|^2+\int_{\Sigma_s}\varepsilon_s|\nabla u_2|^2\\
=&-\int_{\p \Sigma}\varepsilon_0\frac{\p u_1}{\p \tilde\nu}\Big|_+ u_1|_+ + \int_{\p \Sigma}\varepsilon_s\frac{\p u_2}{\p \tilde \nu}\Big|_- u_2|_-
-\int_{\p \Sigma_c}\varepsilon_s \frac{\p u_2}{\p \nu}\Big|_+ u_2|_+\\
=&C\int_{\p \Sigma}\varepsilon_s\frac{\p u_2}{\p \tilde \nu}\Big|_- =C\int_{\p \Sigma}\Big(\varepsilon_s\frac{\p u_2}{\p \tilde \nu}\Big|_-  - \varepsilon_s \frac{\p u_2}{\p \nu}\Big|_+ \Big)=0,
\end{split}
\eeq
where $C$ is a constant.
By \eqnref{eq:app02} one thus has $u_1=C_1$ and $u_2=C_2$, where $C_1$ and $C_2$ are constants. Hence, $\bE=0$.

\section{Integral representations in Lemma~\ref{le:rep01}}\label{app:02}
In this appendix, we present the proof of Lemma~\ref{le:rep01}, namely the solution to \eqnref{eq:paradef02} and \eqnref{eq:pss01} admits the integral representations \eqnref{eq:solrepre01} and \eqnref{eq:solrepre02}. Let $\nu_l$ be the exterior unit normal vector to $\p D_l$, $l=1, 2, \ldots, l_0$. By using the transmission conditions on $\p \Sigma$ and $\p D_l$, $l=1, 2, \ldots, l_0$, one can obtain the following equations
\beq\label{phi_psi}
\left\{
\begin{split}
&\left(- \mu_0 I+\mathcal{D}_{\Sigma,\Sigma}^{k_0, k_s}\right)[\Phi_0]
+\mathcal{P}_{\Sigma,\Sigma}^{k_0,k_s}[\Psi_0]
=\sum_{l'=1}^{l_0}(\mathcal{D}_{\Sigma, D_{l'}}^{k_s, k_0}[\Phi_{l'}]+\mathcal{P}_{\Sigma, D_{l'}}^{k_s, k_0}[\Psi_{l'}])
\quad \mbox{on} \ \p \Sigma,\\
&\omega^2\left(-\frac{\varepsilon_s+\varepsilon_0}{2} I+\mathcal{F}_{\Sigma,\Sigma}^{k_0, k_s}\right)[\Psi_0]
+\mathcal{P}_{\Sigma,\Sigma}^{k_0,k_s}[\Phi_0]
=\sum_{l'=1}^{l_0}\Big(\omega^2\mathcal{F}_{\Sigma,D_{l'}}^{k_s, k_0}[\Psi_{l'}]
+\mathcal{P}_{\Sigma, D_{l'}}^{k_s, k_0}[\Phi_{l'}]\Big)\quad \mbox{on} \ \p \Sigma,\\
&\left(\frac{\mu_l+\mu_0}{2}I+\mathcal{D}_{D_l, D_l}^{\varsigma_l,k_s}\right)[\Phi_l]
+\sum_{l'\neq l}^{l_0}\mathcal{D}_{D_l, D_{l'}}^{\varsigma_{l},k_s}[\Phi_{l'}]+\sum_{l'=1}^{l_0}\mathcal{P}_{D_l, D_{l'}}^{\varsigma_{l},k_s}[\Psi_{l'}]\\
=&\nu_l\times\hat\bE_0+\mathcal{D}_{D_l, \Sigma}^{k_s,\varsigma_l}[\Phi_0]+
\mathcal{P}_{D_l, \Sigma}^{k_s,\varsigma_l}[\Psi_0] \quad \mbox{on} \ \p D_l,\\
&\omega^2\left(\frac{\gamma_l+\varepsilon_s}{2}I+\mathcal{F}_{D_l, D_l}^{\varsigma_l,k_s}\right)[\Psi_l]
+\omega^2\sum_{l'\neq l}^{l_0}\mathcal{F}_{D_l, D_{l'}}^{\varsigma_{l},k_s}[\Psi_{l'}]+\sum_{l'=1}^{l_0}\mathcal{P}_{D_l,D_{l'}}^{\varsigma_{l},k_s}[\Phi_{l'}]\\
=&i\omega\nu_l\times\hat\bH_0+\omega^2\mathcal{F}_{D_l,\Sigma}^{k_s,\varsigma_l}[\Psi_0]+
\mathcal{P}_{D_l,\Sigma}^{k_s,\varsigma_l}[\Phi_0]\quad \mbox{on} \ \p D_l,
\end{split}
\right.
\eeq
with the operators $\mathcal{P}_{D,D'}^{k,k'}$, $\mathcal{D}_{D,D'}^{k,k'}$ and $\mathcal{F}_{D,D'}^{k,k'}$ defined by
\beq\label{app:opdef01}
\begin{split}
\mathcal{P}_{D,D'}^{k,k'}:&=\Lcal_{D,D'}^{k}-\Lcal_{D,D'}^{k'}, \\
\mathcal{D}_{D,D'}^{k,k'}:&=\mu^{(k)}\Mcal_{D,D'}^{k}-\mu^{(k')}\Mcal_{D,D'}^{k'},\\
\mathcal{F}_{D,D'}^{k,k'}:&=\gamma^{(k)}\Mcal_{D,D'}^{k}-\gamma^{(k')}\Mcal_{D,D'}^{k'},
\end{split}
\eeq
where $k, k'\in \{k_0, k_s, \varsigma_1, \varsigma_2, \ldots, \varsigma_{l_0}\}$, $D, D'\in \{\Sigma, D_1, D_2, \dots, D_{l_0}\}$ and $\mu^{(k)}$, $\gamma^{(k)}$ are respectively the parameters $\mu$ and $\gamma$, which are related to $k$. In other words,
\begin{align*}
&\mu^{(k_0)}=\mu^{(k_s)}=\mu_0, \quad \mu^{(\varsigma_l)}=\mu_l, \\
&\gamma^{(k_0)}=\varepsilon_0, \quad \gamma^{(k_s)}=\varepsilon_s, \quad \gamma^{(\varsigma_l)}=\gamma_l.
\end{align*}
In the sequel, we prove that \eqnref{phi_psi} is uniquely solvable when $\omega\in\mathbb{R}_+$ is sufficiently small. By asymptotic analysis (see \eqnref{eq:asymtmp01} and \eqnref{eq:asymtmp02}), one can find that
\beq\label{eq:app03}
\begin{split}
\mathcal{F}_{\Sigma,\Sigma}^{k_0,k_s}=(\varepsilon_0-\varepsilon_s)\Mcal_{\Sigma}^0+\Ocal(\omega^2), \quad
\mathcal{F}_{\Sigma,D_l}^{k_s,k_0}=(\varepsilon_s-\varepsilon_0)\Mcal_{\Sigma, D_l}^0+\Ocal(\omega^2),\\
\mathcal{P}_{\Sigma,D_l}^{k_s,k_0}=(k_s^2-k_0^2)\nu\times(\Acal_{D_l}^{0}+D^2\mathcal{B}_{D_l})\Big|_{\p \Sigma}+\Ocal(\omega^3), \quad \mathcal{D}_{D_l, D_l}^{\varsigma_l, k_s}=(\mu_l-\mu_0)\Mcal_{D_l}^{0}+\Ocal(\omega^2),\\
\mathcal{F}_{D_l, D_{l}}^{\varsigma_l, k_s}=(\gamma_l-\varepsilon_s)\Mcal_{D_l}^0+\Ocal(\omega),\quad
\mathcal{P}_{D_l, D_{l'}}^{\varsigma_{l}, k_s}=(\varsigma_{l}^2-k_s^2)\nu_l\times(\Acal_{D_{l'}}^{0}+D^2\mathcal{B}_{D_{l'}})\Big|_{\p D_l}+\Ocal(\omega^2),\\
\mathcal{F}_{D_l,\Sigma}^{k_s,\varsigma_l}=(\varepsilon_s-\gamma_l)\Mcal_{D_l,\Sigma}^{0}+\Ocal(\omega), \quad
\mathcal{P}_{D_l,\Sigma}^{k_s,\varsigma_l}=(k_s^2-\varsigma_l^2)\nu_l\times(\Acal_{\Sigma}^{0}+D^2\mathcal{B}_{\Sigma})\Big|_{\p D_l}+\Ocal(\omega^2).
\end{split}
\eeq
{In \eqnref{eq:app03}, we only present the asymptotic expansion of some of the operators involved in \eqnref{phi_psi}, while the asymptotic behavior of the other operators can be derived in a similar manner.}
Then the first equation in \eqnref{phi_psi} implies
\beq\label{eq:app04}
\|\Phi_0\|_{{\rm TH}({\rm div}, \p \Sigma)}=\Ocal\left(\omega^2\Big(\|\Psi_0\|_{{\rm TH}({\rm div}, \p \Sigma)}+\sum_{l'=1}^{l_0}\big(\|\Phi_{l'}\|_{{\rm TH}({\rm div}, \p D_{l'})}+\|\Psi_{l'}\|_{{\rm TH}({\rm div}, \p D_{l'})}\big)\Big)\right).
\eeq
By substituting \eqnref{eq:app03} and \eqnref{eq:app04} into the second equation of \eqnref{phi_psi}, one further has
\beq\label{eq:app05}
\begin{split}
\Psi_0=&-\Big(\Gl_{\varepsilon} I +\Mcal_{\Sigma}^0\Big)^{-1}\sum_{l'=1}^{l_0}\Big(\Mcal_{\Sigma, D_{l'}}^0[\Psi_{l'}]
+\mu_0\nu\times(\Acal_{D_{l'}}^0+D^2\mathcal{B}_{D_{l'}})[\Phi_{l'}]\Big)\\
&+\Ocal\left(\omega^2\Big(\sum_{l'=1}^{l_0}\big(\|\Phi_{l'}\|_{{\rm TH}({\rm div}, \p D_{l'})}+\|\Psi_{l'}\|_{{\rm TH}({\rm div}, \p D_{l'})}\big)\Big)\right) \quad \mbox{on} \ \ \p \Sigma.
\end{split}
\eeq
By substituting \eqnref{eq:app04} and \eqnref{eq:app05} into the third and fourth equations in \eqnref{phi_psi}, one obtains
\beq\label{eq:app06}
\begin{split}
&\Big(\Gl_{\mu_l} I +\Mcal_{D_l}^0\Big)[\Phi_l]+\sum_{l'\neq l}^{l_0}\Mcal_{D_l, D_{l'}}^0[\Phi_{l'}]\\
&+\frac{\varsigma_l^2}{\mu_{l}-\mu_0}\nu_l\times\Big(\big(\Acal_\Sigma^0+D^2\mathcal{B}_\Sigma\big)[\Psi_0]
+\sum_{l'=1}^{l_0}\big(\Acal_{D_{l'}}^0+D^2\mathcal{B}_{D_{l'}}\big)[\Psi_{l'}]\Big)\\
=&\frac{1}{\mu_l-\mu_0}\nu_l\times\hat\bE_0+\Ocal\Big(\sum_{l'=1}^{l_0}\big((\varsigma_{l}^3+\omega^2)\|\Psi_{l'}\|_{{\rm TH}({\rm div}, \p D_{l'})}+\omega^2\|\Phi_{l'}\|_{{\rm TH}({\rm div}, \p D_{l'})}\big)\Big) \quad \mbox{on} \ \ \p D_l,
\end{split}
\eeq
and
\beq\label{eq:app06add}
\begin{split}
&\Big(\Gl_{\gamma_l} I +\Mcal_{D_l}^0\Big)[\omega\Psi_l]+\sum_{l'\neq l}^{l_0}\Mcal_{D_l, D_{l'}}^0[\omega\Psi_{l'}]\\
&+\sum_{l'=1}^{l_0}\omega^{-1}\frac{\varsigma_l^2-k_s^2}{\gamma_l-\varepsilon_s}
\nu_l\times(\Acal_{D_{l'}}^0+D^2\mathcal{B}_{D_{l'}})[\Phi_{l'}]+\omega\Mcal_{D_l,\Sigma}^0[\Psi_0]\\
=&i\frac{1}{\gamma_l-\varepsilon_s}\nu_l\times\hat\bH_0+\Ocal\Big(\sum_{l'=1}^{l_0}\big(\omega\varsigma_{l}^2\|\Psi_{l'}\|_{{\rm TH}({\rm div}, \p D_{l'})}+\omega^{-1}\varsigma_{l}^3\|\Phi_{l'}\|_{{\rm TH}({\rm div}, \p D_{l'})}\big)\Big) \quad \mbox{on} \ \ \p D_l,
\end{split}
\eeq
for $l=1, 2, \ldots, l_0$, where $\Gl_{\mu_l}$ and $\Gl_{\gamma_l}$ are defined in \eqnref{eq:defmulam}. We claim that the following equations are uniquely solvable
\beq\label{eq:app07}
\begin{split}
&\Big(\Gl_{\mu_l} I +\Mcal_{D_l}^0\Big)[\Phi_l]+\sum_{l'=1}^{l_0}\Mcal_{D_l, D_{l'}}^0[\Phi_{l'}]=\frac{1}{\mu_l-\mu_0}\nu_l\times\hat\bE_0, \quad \mbox{on} \, \p D_l, \quad l=1, 2, \ldots, l_0,
\end{split}
\eeq
when $D_l$, $l=1, 2, \ldots, l_0$ are far away from each other. For relevant details, we refer to Section \ref{sec:02}. Denote by $\mathbb{M}_D$ the $l_0$-by-$l_0$ matrix type operator defined on ${\rm TH}({\rm div}, \p D_1)\times {\rm TH}({\rm div}, \p D_2)\times\cdots\times {\rm TH}({\rm div}, \p D_{l_0})$
\beq\label{eq:thpf010302}
\begin{split}
\mathbb{M}_D:=
\left(
\begin{array}{cccc}
\Mcal_{D_1}^{0} & \Mcal_{D_1,D_2}^{0} & \cdots & \Mcal_{D_1, D_{l_0}}^{0} \\
\Mcal_{D_2,D_1}^{0} & \Mcal_{D_2}^{0} & \cdots & \Mcal_{D_2,D_{l_0}}^{0}\\
\vdots & \vdots &\ddots &\vdots \\
\Mcal_{D_{l_0},D_1}^{0} & \Mcal_{D_{l_0},D_2}^{0} & \cdots & \Mcal_{D_{l_0}}^{0}
\end{array}
\right).
\end{split}
\eeq
Then the operator
\beq\label{eq:thpf010303}
\mathbb{L}_D^{\mu}:=\left(
\begin{array}{cccc}
\lambda_{\mu_1}I & 0 & \cdots & 0\\
0 & \lambda_{\mu_2}I & \cdots & 0\\
\vdots & \vdots &\ddots &\vdots \\
0 & 0 & \cdots & \lambda_{\mu_{l_0}}I
\end{array}
\right)+\mathbb{M}_D
\eeq
is invertible on ${\rm TH}({\rm div}, \p D_1)\times {\rm TH}({\rm div}, \p D_2)\times\cdots\times {\rm TH}({\rm div}, \p D_{l_0})$.
From \eqnref{eq:app06} one obtains that
\beq\label{eq:app08}
\begin{split}
\Phi_l=&\Big((\mathbb{L}_D^{\mu})^{-1}\big[(\frac{(\nu_1\times(\hat\bE_0+\mathbf{A}))^T}{\mu_1-\mu_0}, \frac{(\nu_2\times(\hat\bE_0+\mathbf{A}))^T}{\mu_2-\mu_0}, \ldots, \frac{(\nu_{l_0}\times(\hat\bE_0+\mathbf{A}))^T}{\mu_{l_0}-\mu_0})^T\big]\Big)\cdot(\mathbf{e}_l\otimes(1, 1, 1)^T)\\
&+\Ocal\Big(\sum_{l'=1}^{l_0}\big((\varsigma_{l}^3+\omega^2)\|\Psi_{l'}\|_{{\rm TH}({\rm div}, \p D_{l'})}+\omega^2\sum_{l'=1}^{l_0}\|\nu_{l'}\times\hat\bE_0\|_{{\rm TH}({\rm div}, \p D_{l'})}\Big), \quad l=1, 2, \ldots, l_0,
\end{split}
\eeq
where
\beq
\label{eq:appedixdefA}
\mathbf{A}:=\frac{\varsigma_l^2}{\mu_{l}-\mu_0}\Big(\big(\Acal_\Sigma^0+D^2\mathcal{B}_\Sigma\big)\Big(\Gl_{\varepsilon} I +\Mcal_{\Sigma}^0\Big)^{-1}\sum_{l'=1}^{l_0}\Mcal_{\Sigma, D_{l'}}^0[\Psi_{l'}]
-\sum_{l'=1}^{l_0}\big(\Acal_{D_{l'}}^0+D^2\mathcal{B}_{D_{l'}}\big)[\Psi_{l'}]\Big).
\eeq
Here, the notation $\otimes$ stands for the Kronecker product and $\mathbf{e}_l$ is the $l_0$-dimensional Euclidean unit vector. By taking the surface divergence of both sides of \eqnref{eq:app08}, one can further obtain
\beq\label{eq:app09}
\begin{split}
\nabla_{\p D_l}\cdot\Phi_l=&-i\omega\mu_0\Big((\mathbb{J}_D^{\mu})^{-1}\big[(\frac{\nu_1\cdot\hat\bH_0}{\mu_1-\mu_0}, \frac{\nu_2\cdot\hat\bH_0}{\mu_2-\mu_0}, \ldots, \frac{\nu_{l_0}\cdot\hat\bH_0}{\mu_{l_0}-\mu_0})^T\big]\Big)_l\\
&-\Big((\mathbb{J}_D^{\mu})^{-1}\big[(\frac{\varsigma_1^2\nu_1\cdot\mathbf{B}}{\mu_1-\mu_0}, \frac{\varsigma_2^2\nu_2\cdot\mathbf{B}}{\mu_2-\mu_0}, \ldots, \frac{\varsigma_{l_0}^2\nu_{l_0}\cdot\mathbf{B}}{\mu_{l_0}-\mu_0})^T\big]\Big)_l\\
&+\Ocal\Big(\sum_{l'=1}^{l_0}\big((\varsigma_{l}^3+\omega^2)\|\Psi_{l'}\|_{{\rm TH}({\rm div}, \p D_{l'})}+\omega^3\sum_{l'=1}^{l_0}\|\nu_{l'}\cdot\hat\bH_0\|_{L^2(\p D_{l'})}\Big), \quad l=1, 2, \ldots, l_0,
\end{split}
\eeq
with
\beq
\label{eq:appedixdefA}
\mathbf{B}:=\nabla\times\Acal_\Sigma^0\Big(\Gl_{\varepsilon} I +\Mcal_{\Sigma}^0\Big)^{-1}\sum_{l'=1}^{l_0}\Mcal_{\Sigma, D_{l'}}^0[\Psi_{l'}]
-\nabla\times\sum_{l'=1}^{l_0}\Acal_{D_{l'}}^0[\Psi_{l'}],
\eeq
and the operator $\mathbb{J}_D^{\mu}$ defined on $L^2(\p D_1)\times L^2(\p D_2)\times\cdots\times L^2(\p D_{l_0})$ given by
\beq\label{eq:app10}
\mathbb{J}_D^{\mu}:=\left(
\begin{array}{cccc}
\lambda_{\mu_1}I & 0 & \cdots & 0\\
0 & \lambda_{\mu_2}I & \cdots & 0\\
\vdots & \vdots &\ddots &\vdots \\
0 & 0 & \cdots & \lambda_{\mu_{l_0}}I
\end{array}
\right)-\mathbb{K}_D^*,
\eeq
where $\mathbb{K}_D^*$ is an $l_0$-by-$l_0$ matrix type operator defined on $L^2(\p D_1)\times L^2(\p D_2)\times\cdots\times L^2(\p D_{l_0})$
\beq\label{eq:app11}
\begin{split}
\mathbb{K}_D^*:=
\left(
\begin{array}{cccc}
(\Kcal_{D_1}^{0})^* & \nu_1\cdot\nabla\Scal_{D_2}^{0} & \cdots & \nu_1\cdot\nabla\Scal_{D_{l_0}}^{0} \\
\nu_2\cdot\nabla\Scal_{D_1}^{0} & (\Kcal_{D_2}^{0})^* & \cdots & \nu_2\cdot\nabla\Scal_{D_{l_0}}^{0}\\
\vdots & \vdots &\ddots &\vdots \\
\nu_{l_0}\cdot\nabla\Scal_{D_1}^{0} & \nu_{l_0}\cdot\nabla\Scal_{D_2}^{0} & \cdots & (\Kcal_{D_{l_0}}^{0})^*
\end{array}
\right).
\end{split}
\eeq
Finally, by substituting \eqnref{eq:app05}, \eqnref{eq:app08} and \eqnref{eq:app09} into \eqnref{eq:app06add}, one can have
\beq\label{eq:app12}
\begin{split}
&\Big(\Gl_{\gamma_l} I +\Mcal_{D_l}^0\Big)[\omega\Psi_l]+\sum_{l'\neq l}^{l_0}\Mcal_{D_l, D_{l'}}^0[\omega\Psi_{l'}]-\Mcal_{D_l,\Sigma}^0\Big(\Gl_{\varepsilon} I +\Mcal_{\Sigma}^0\Big)^{-1}\sum_{l'=1}^{l_0}\Mcal_{\Sigma, D_{l'}}^0[\omega\Psi_{l'}]\\
&=i\frac{1}{\gamma_l-\varepsilon_s}\nu_l\times\hat\bH_0+\Ocal\Big(\sum_{l'=1}^{l_0}\big(\omega\varsigma_{l}^2\|\Psi_{l'}\|_{{\rm TH}({\rm div}, \p D_{l'})}+\omega^{-1}\varsigma_{l}^3\sum_{l'=1}^{l_0}\|\nu_{l'}\times\hat\bE_0\|_{{\rm TH}({\rm div}, \p D_{l'})}\big)\Big).
\end{split}
\eeq
Similarly, one can prove that \eqnref{eq:app12} is uniquely solvable (see Section \ref{sec:02}). Define
\beq\label{eq:app14}
\begin{split}
\mathbb{N}_D:=&{\rm diag}\Big(\big(\Mcal_{D_1,\Sigma}, \Mcal_{D_2,\Sigma}, \ldots, \Mcal_{D_{l_0},\Sigma}\big)\big(\Gl_{\varepsilon} I +\Mcal_{\Sigma}^0\big)^{-1}\Big)\\
&\quad\quad\quad\left(
\begin{array}{cccc}
\Mcal_{\Sigma, D_1}^{0} & \Mcal_{\Sigma,D_2}^{0} & \cdots & \Mcal_{\Sigma, D_{l_0}}^{0} \\
\Mcal_{\Sigma,D_1}^{0} & \Mcal_{\Sigma, D_2}^{0} & \cdots & \Mcal_{\Sigma,D_{l_0}}^{0}\\
\vdots & \vdots &\ddots &\vdots \\
\Mcal_{\Sigma,D_1}^{0} & \Mcal_{\Sigma,D_2}^{0} & \cdots & \Mcal_{\Sigma, D_{l_0}}^{0}
\end{array}
\right),
\end{split}
\eeq
and
\beq\label{eq:app15}
\mathbb{L}_D^{\gamma}:=\left(
\begin{array}{cccc}
\lambda_{\gamma_1}I & 0 & \cdots & 0\\
0 & \lambda_{\gamma_2}I & \cdots & 0\\
\vdots & \vdots &\ddots &\vdots \\
0 & 0 & \cdots & \lambda_{\gamma_{l_0}}I
\end{array}
\right)+\mathbb{M}_D-\mathbb{N}_D.
\eeq
One can show that
\beq\label{eq:app16}
\omega\Psi_l=i\Psi_l^{(0)}+\Ocal(\omega), \quad l=1, 2, \ldots, l_0,
\eeq
where $\Psi_l^{(0)}$ satisfies
\beq\label{eq:app17}
\begin{split}
\Psi_l^{(0)}=
&\Big((\mathbb{L}_D^{\gamma})^{-1}\big[(\frac{(\nu_1\times\hat\bH_0)^T}{\gamma_1-\varepsilon_s}, \frac{(\nu_2\times\hat\bH_0)^T}{\gamma_2-\varepsilon_s}, \ldots, \frac{(\nu_{l_0}\times\hat\bH_0)^T}{\gamma_{l_0}-\varepsilon_s})^T\big]\Big)\cdot(\mathbf{e}_l\otimes(1, 1, 1)^T).
\end{split}
\eeq

\end{document}